\title{Precompactness of sequences of random variables and random curves revisited}  
\date{\today}
\author{Osama Abuzaid\thanks{Department of Mathematics and Systems Analysis, Aalto University, Otakaari 1, FI-02150, Espoo, Finland. \protect\url{osama.abuzaid@aalto.fi}}}
\setlist[enumerate]{topsep = 1ex, leftmargin=1cm, itemsep= -2pt}
\let\OLDthebibliography\thebibliography
\renewcommand\thebibliography[1]{
	\OLDthebibliography{#1}
	\setlength{\parskip}{1pt}
	\setlength{\itemsep}{2pt}
}
\newtheorem{thm}{Theorem}[section]
\newtheorem{cor}[thm]{Corollary}
\newtheorem{lem}[thm]{Lemma}
\newtheorem{prop}[thm]{Proposition}
\newtheorem{theorem}{Theorem}
\theoremstyle{definition} 
\newtheorem{df}[thm]{Definition}
\numberwithin{equation}{section}
\global\long\def\bR{\mathbb{R}}
\global\long\def\bN{\mathbb{N}}
\global\long\def\bC{\mathbb{C}}
\global\long\def\bP{\mathbb{P}}
\global\long\def\cA{\mathcal{A}}
\global\long\def\cS{\mathcal{S}}
\global\long\def\cP{\mathcal{P}}
\global\long\def\cF{\mathcal{F}}
\global\long\def\cX{\mathcal{X}}
\global\long\def\cL{\mathcal{L}}
\global\long\def\cM{\mathcal{M}}
\global\long\def\ud{\mathrm{d}}
\global\long\def\ii{\mathfrak{i}}
\def\diam{{\rm diam}}
\newcommand{\dist}[1][]{{\rm dist}_{#1}}
\def\argmin{{\rm argmin}}
\newcommand{\cond}{\;|\;}
\newcommand{\pathset}{\cP}
\newcommand{\Pathset}{\mathfrak{P}}
\newcommand{\dpaths}{d_{\pathset}}
\newcommand{\dPaths}{d_{\Pathset}}
\newcommand\tsup[2][2]{%
	\def\useanchorwidth{T}%
	\ifnum#1>1%
	\stackon[-1.25ex]{\tsup[\numexpr#1-1\relax]{#2}}{\mathchar"307E\kern-.5pt}%
	\else%
	\stackon[-1ex]{#2}{\mathchar"307E\kern-.5pt}%
	\fi%
}
\newcommand{\ttilde}[1]{\tsup[2]{#1}}
\newcommand{\annulus}[3]{A_{#1,#2}(#3)}
\newcommand{\crossings}[4]{A^{#4}_{#1,#2}(#3)}
\newcommand{\ncrossings}[4]{N^{#4}_{#1,#2}(#3)}
\newcommand{\scrossings}[5]{A^{#4}_{#1,#2}(#3;#5)}
\newcommand{\nscrossings}[5]{N^{#4}_{#1,#2}(#3;#5)}
\newcommand{\restricted}[2]{{#1}|_{#2}}
\newcommand{\conditioned}[2]{{#1}^{\#}_{#2}}
\begin{document}
	\maketitle

	\begin{abstract}
		This paper studies when a sequence of probability measures on a metric space admit subsequential weak limits. A sufficient condition called sequential tightness is formulated, which relaxes some assumptions for asymptotic tightness used in the Prokhorov -- Le Cam theorem. The proof only uses elementary tools from probability theory.
		
		Sequential tightness gives means to characterize the precompact collections of random curves on a compact geodesic metric space in terms of an annulus crossing condition, which generalizes the one by Aizenman and Burchard by allowing estimates for annulus crossing probabilities to be non-uniform over the modulus of annuli.
	\end{abstract}

	\tableofcontents

	\newpage
	\section{Introduction}\label{sec:intro}
	Given a sequence of probability measures \((\mu_n)_{n \in \bN}\) on a topological space \(\cX\) one is often interested in the existence of a possible limiting measure \(\mu\). A usual strategy is to first establish precompactness of the sequence \((\mu_n)_{n \in \bN}\), and then identify every possible subsequential weak limit of \((\mu_n)_{n \in \bN}\) to be equal to the same probability measure \(\mu\). In the present article we address the question of identifying precompact sequences of probability measures on metrizable topological spaces.
	
	We denote by \(\cM_1(\cX)\) the set of Borel probability measures on \(\cX\). A sequence \((\mu_n)_{n \in \bN}\) in \(\cM_1(\cX)\) is said to converge weakly to \(\mu \in \cM_1(\cX)\) if for every bounded continuous function \(f : \cX \to \bR\) we have \(\lim_{n \to \infty}\int_\cX f \ud\mu_n = \int_\cX f \ud\mu\); this is denoted as \(\mu_n \xrightarrow{w} \mu\). We equip \(\cM_1(\cX)\) with a topology of weak convergence (see e.g. the beginning of~\cite[Section 6]{billingsley1999convergence}). If the topology on \(\cX\) is metrizable, Prokhorov -- Le-Cam theorem \cite{prokhorov1956convergence,lecam1957convergence} provides a sufficient condition for precompactness called asymptotic tightness:
	\begin{theorem}[{E.g. \cite[Chapter 7, Theorem <36>]{pollard2002user}}]\label{thm:Prohorov-LeCam}
		Let \(\cX\) be a metrizable topological space. Suppose a sequence \((\mu_n)_{n \in \bN}\) in \(\cM_1(\cX)\) is asymptotially tight\footnote{\cite{pollard2002user} calls this uniform tightness.}: for every \(\varepsilon > 0\) there exists a compact set \(K_\varepsilon \subset \cX\) such that any open set \(G \subset \cX\) containing \(K_\varepsilon\) satisfies \(\liminf_{n \to \infty} \mu_n(G) \ge 1- \varepsilon\). Then \((\mu_n)_{n \in \bN}\) is a precompact sequence in \(\cM_1(\cX)\).
	\end{theorem}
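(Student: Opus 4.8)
The plan is to extract from $(\mu_n)_{n\in\bN}$ a weakly convergent subsequence; recall that $(\mu_n)_{n\in\bN}$ being precompact amounts to every subsequence having a further subsequence converging weakly, and since a subsequence of an asymptotically tight sequence is again asymptotically tight, it suffices — after relabelling an arbitrary subsequence — to produce one weakly convergent subsequence of $(\mu_n)_{n\in\bN}$ itself. Fix a metric $d$ on $\cX$ with $d\le 1$, write $K_m:=K_{1/m}$ (which we may assume increasing in $m$), and set $\cX_0:=\overline{\bigcup_m K_m}$, a closed \emph{separable} subspace of $\cX$ with Polish completion $\widehat{\cX_0}$ in which each $K_m$ is compact. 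For any $\delta>0$ the open set $\{x:d(x,\cX_0)<\delta\}$ contains every $K_m$, so asymptotic tightness gives $\liminf_n\mu_n(\{x:d(x,\cX_0)<\delta\})\ge 1-1/m$ for all $m$, hence $\mu_n(\{x:d(x,\cX_0)\ge\delta\})\to 0$ and therefore $\int_\cX d(x,\cX_0)\,\ud\mu_n(x)\to 0$. Thus the mass of $\mu_n$ concentrates asymptotically arbitrarily close to the separable core $\cX_0$, although for an individual $n$ it may sit entirely off $\cX_0$.

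I would then transport the $\mu_n$ onto the core. Choosing a countable dense subset of $\cX_0$ and mapping $x$ to the first dense point within distance $d(x,\cX_0)+1/k$ gives Borel maps $P^{(k)}:\cX\to\cX_0$ with $d(x,P^{(k)}x)\le d(x,\cX_0)+1/k$; set $\nu_n^{(k)}:=(P^{(k)})_*\mu_n\in\cM_1(\widehat{\cX_0})$. The elementary implication ``$d(x,K_m)<r/3$ and $k>3/r$ $\Rightarrow d(P^{(k)}x,K_m)<r$'' yields $\nu_n^{(k)}\bigl(\overline{\{y\in\widehat{\cX_0}:d(y,K_m)<r\}}\bigr)\ge\mu_n(\{x:d(x,K_m)<r/3\})$ for $k>3/r$, and the right side has $\liminf_n\ge 1-1/m$. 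As the closed set on the left is the closure of a totally bounded set in the complete space $\widehat{\cX_0}$, hence compact, and finitely many measures are always tight, $(\nu_n^{(k)})_{n\in\bN}$ is tight in the Polish space $\widehat{\cX_0}$ for each fixed $k$.

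Applying the classical Prokhorov--Le Cam theorem on $\widehat{\cX_0}$ and a diagonal extraction over $k$, pass to a subsequence $(n_j)$ with $\nu^{(k)}_{n_j}\xrightarrow{w}\lambda^{(k)}$ in $\cM_1(\widehat{\cX_0})$ for every $k$. For bounded Lipschitz $f$, $|\int f\,\ud\nu^{(k)}_{n_j}-\int f\,\ud\nu^{(k')}_{n_j}|\le\mathrm{Lip}(f)\int_\cX\bigl(2d(x,\cX_0)+\tfrac1k+\tfrac1{k'}\bigr)\ud\mu_{n_j}(x)$, whose limit in $j$ is $\mathrm{Lip}(f)(\tfrac1k+\tfrac1{k'})$; since the bounded-Lipschitz duality metrizes weak convergence on the Polish space $\widehat{\cX_0}$, $(\lambda^{(k)})_k$ is Cauchy, and it is uniformly tight by the estimate of the previous paragraph, so $\lambda^{(k)}\xrightarrow{w}\lambda$ for some $\lambda\in\cM_1(\widehat{\cX_0})$. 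The portmanteau inequality for closed sets, applied first as $j\to\infty$ and then as $k\to\infty$, upgrades the neighbourhood bounds to $\lambda(K_m)\ge 1-1/m$; hence $\lambda$ is carried by the $\sigma$-compact set $\bigcup_m K_m\subseteq\cX$, and we let $\mu\in\cM_1(\cX)$ be the corresponding Borel measure. Finally, for bounded Lipschitz $f:\cX\to\bR$ extend $f|_{\cX_0}$ by uniform continuity to a bounded function $\widetilde f$ on $\widehat{\cX_0}$ and combine $|\int_\cX f\,\ud\mu_{n_j}-\int\widetilde f\,\ud\nu^{(k)}_{n_j}|\le\mathrm{Lip}(f)\bigl(\tfrac1k+\int_\cX d(x,\cX_0)\,\ud\mu_{n_j}\bigr)$ with $\nu^{(k)}_{n_j}\xrightarrow{w}\lambda^{(k)}\xrightarrow{w}\lambda$ to get $\int_\cX f\,\ud\mu_{n_j}\to\int_\cX f\,\ud\mu$. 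Since bounded Lipschitz functions determine weak convergence for measures carried by a separable set, $\mu_{n_j}\xrightarrow{w}\mu$, completing the proof.

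The genuine obstacle lies in the first two steps: because the hypothesis controls $\liminf_n\mu_n(G)$ only for open $G\supseteq K_\varepsilon$ and says nothing about $\mu_n(K_\varepsilon)$ itself, one cannot assume that the $\mu_n$ are carried, even approximately, by a fixed separable set, so the passage to a separable (indeed Polish) core has to be done at the level of pushforwards, introducing the two independent parameters $n$ and $k$ and a double limit. Keeping track of which portmanteau inequality — for open or for closed sets — is available at each stage is the point requiring care; the remaining ingredients (compactness of $\cM_1$ over a Polish space, regularity of finite Borel measures on metric spaces, the bounded-Lipschitz metric) are standard.
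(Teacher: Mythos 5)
Your proof is correct. Note first that the paper does not prove this statement at all: it is quoted from Pollard as a known black box, so there is no internal proof to compare against; the closest the paper comes is an independent, coupling-based proof of the sequential-tightness criterion (\Cref{thm:complete-approximability-implies-precompactness}) together with the equivalence of sequential and asymptotic tightness (\Cref{thm:weak-limit-tight-iff-asymptotically-tight}), which would yield the present theorem by a quite different route. Your argument instead reduces the general metrizable case to the classical Prokhorov theorem on a Polish space (the paper's \Cref{thm:Prokhorov}): you correctly exploit that asymptotic tightness forces the mass to concentrate near the separable core \(\cX_0=\overline{\bigcup_m K_m}\) (so that \(\int d(x,\cX_0)\,\ud\mu_n\to 0\)), push the measures onto the completion \(\widehat{\cX_0}\) by Borel projections \(P^{(k)}\), verify uniform tightness there via the \(r/3\)-estimate and compactness of closed \(r\)-neighbourhoods of the \(K_m\), extract limits \(\lambda^{(k)}\), and use the bounded-Lipschitz Cauchy estimate plus tightness to obtain a single limit \(\lambda\) carried by \(\bigcup_m K_m\subset\cX\), against which the original sequence converges. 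All the individual steps check out; the only places where you gloss are minor and standard: the passage from \(\lambda(\overline{B_{K_m}(r)})\ge 1-1/m\) to \(\lambda(K_m)\ge 1-1/m\) needs the extra limit \(r\to 0\) together with continuity from above, and in the last step the separability caveat is unnecessary, since convergence of integrals of bounded Lipschitz functions implies weak convergence on any metric space (approximate indicators of open sets by \(\min(1,j\,d(\cdot,U^c))\) and use the open-set portmanteau condition). Compared with the paper's machinery, your reduction is shorter and stays entirely within classical results, at the cost of invoking the Polish-space Prokhorov theorem; the paper's coupling approach avoids that but is considerably heavier.
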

	The compact set \(K_\varepsilon\) in asymptotic tightness has to be uniform over all of its open neighborhoods \(G \supset K_\varepsilon\). Below we introduce the novel concept of sequential tightness, which relaxes this uniformity with the cost of depending on the topology inducing metric. For \(A \subset \cX\) and \(\delta > 0\), we will write
	\begin{align*}
		B_A(\delta) := \bigcup_{x \in A}B_x(\delta), \qquad \text{where} \qquad B_x(\delta) = \{y \in \cX \cond d(x, y) < \delta\}.
	\end{align*}
	\begin{df}[Sequential tightness]\label{def:sequential-tightness}
		Let \((\cX, d)\) be a metric space. A sequence \((\mu_n)_{n \in \bN}\) in \(\cM_1(\cX)\) is said to be sequentially tight if for each \(\varepsilon > 0\) there exists a collection of compact subsets \((K_\varepsilon^\delta)_{\delta > 0}\) of \(\cX\) such that
		\begin{enumerate}[label=(\roman*)]
			\item\label{item:sequential-tightness-bound} \(\liminf_{n \to \infty}\mu_n\big(B_{K^\delta_\varepsilon}(\delta)\big) \ge 1 - \varepsilon\) for every \(\varepsilon, \delta > 0\), and
			\item\label{item:sequential-tightness-complete} the subspace \(K_\varepsilon := \overline{\bigcup_{\delta > 0} K^\delta_\varepsilon} \subset \cX\) is complete.
		\end{enumerate}
		In such case we say that \((\mu_n)_{n \in \bN}\) is sequentially tight with respect to sets \((K^\delta_\varepsilon)_{\delta, \varepsilon > 0}\). For each \(\varepsilon > 0\) we also say that \((\mu_n)_{n \in \bN}\) is \(\varepsilon\)-tight along the sets \((K^\delta_\varepsilon)_{\delta > 0}\).	
	\end{df}
	The first main result of this paper states that sequential tightness is a sufficient condition for precompactness:
	\begin{restatable}{thm}{precompactnessthm}\label{thm:complete-approximability-implies-precompactness}
		Let \((\cX, d)\) be a metric space. Then every sequentially tight sequence \((\mu_n)_{n \in \bN}\) in \(\cM_1(\cX)\) is precompact. 
	\end{restatable}
	The proof of \Cref{thm:complete-approximability-implies-precompactness} only uses elementary tools from probability theory. This is contrast to standard textbook proofs of~\Cref{thm:Prohorov-LeCam} and the slightly weaker Prokhorov's theorem (see ~\Cref{thm:Prokhorov} in \Cref{sec:appendix}), which either use Riesz-Markov-Kakutani representation theorem \cite{pollard2002user} or the Carath\'eodory extension theorem \cite[Section 5]{billingsley1999convergence}. 
	
	A key technical part of the proof is a construction of a coupling of a subsequence of \((\mu_n)_{n \in \bN}\) reminiscent of Skorokhod's representation theorem (see e.g. the proof of \cite[Theorem 6.5]{billingsley1999convergence}) along which almost sure convergence holds (\Cref{thm:consistent-couplings-exist}). However, converse to Skorokhod's which starts with a weakly converging sequence, we use the coupling to find weakly converging subsequences.
	
	If \(\cX\) is a metrizable topological space and \((\mu_n)_{n \in \bN}\) is an asymptotically tight sequence in \(\cM_1(\cX)\), the compact sets \(K_\varepsilon \subset \cX\) from \Cref{thm:Prohorov-LeCam} immediately shows that \((\mu_n)_{n \in \bN}\) is sequentially tight along the sets \(K^\delta_\varepsilon := K_\varepsilon\) regardless of the choice of the topology inducing metric on \(\cX\). Although less trivial, the converse also holds: every sequentially tight sequence of probability measures is also asymptotically tight, as we prove in~\Cref{thm:weak-limit-tight-iff-asymptotically-tight}. One could thus argue that technically~\Cref{thm:complete-approximability-implies-precompactness} is only a restatement of~\Cref{thm:Prohorov-LeCam}. However, since the proof of~\Cref{thm:weak-limit-tight-iff-asymptotically-tight} builds upon~\Cref{thm:complete-approximability-implies-precompactness}, it is not clear whether one could reduce the proof of~\Cref{thm:complete-approximability-implies-precompactness} to~\Cref{thm:Prohorov-LeCam}.
	
	Despite their equivalence, the advantage of sequential over asymptotic tightness is exemplified in complete metric spaces where the non-uniformity of the choice of the compact sets with respect to their neighborhoods can be fully capitalized, since \cref{item:sequential-tightness-complete} in \Cref{def:sequential-tightness} is satisfied automatically. Let us emphasize this by formulating a straightforward corollary.
	\begin{cor}\label{thm:seq-tightness-complete}
		Let \((\cX, d)\) be a complete metric space, and \((\mu_n)_{n \in \bN}\) a sequence in \(\cM_1(\cX)\). Suppose for each \(\varepsilon, \delta > 0\) there exists a compact set \(K^\delta_\varepsilon \subset \cX\) such that
		\begin{align} \label{eqn:seq-thighness-complete}
			\liminf_{n \to \infty}\mu_n\big(B_{K^\delta_\varepsilon}(\delta)\big) \ge 1 - \varepsilon.
		\end{align}
		Then \((\mu_n)_{n \in \bN}\) is a precompact sequence of probability measures.
	\end{cor}
	\begin{proof}
		The sets \((K^\delta_\varepsilon)_{\delta,\varepsilon > 0}\) satisfy \Cref{def:sequential-tightness}\ref{item:sequential-tightness-bound} by assumption. The set \(K_\varepsilon = \overline{\bigcup_{\delta > 0} K^\delta_\varepsilon}\) is a closed subset of the complete space \(\cX\), hence it is complete, so \Cref{def:sequential-tightness}\ref{item:sequential-tightness-complete} is also satisfied. \Cref{thm:complete-approximability-implies-precompactness} hence implies precompactness of the sequence \((\mu_n)_{n \in \bN}\).
	\end{proof}
	By choosing ``dense enough'' finite subsets, the sets \(K^\delta_\varepsilon\) in sequential tightness (\Cref{def:sequential-tightness}) can be chosen to be finite (see~\Cref{thm:finite-approximability}). Together with~\Cref{thm:seq-tightness-complete} we thus arrive at the following informal perspective: if a sequence of probability measures on a complete metric space is eventually supported in the vicinity of a common finite subset with high probability, the sequence is precompact. The question of precompactness hence reduces to finding the support approximating finite subsets.
	
	To demonstrate the usefulness of this perspective, we use it to characterize precompact sequences of certain random collections of curves. This is important for example when considering scaling limits of lattice interfaces in statistical physics~\cite{smirnov2001critical,kemppainen2017random,benoist2019scaling}. Precompactness for e.g. self-avoiding walks and double dimer interfaces is a long standing open problem.

	Given a metric space \((\cX, d)\), denote by \(\pathset = \pathset(\cX)\) the set of compact curves \(\gamma : I \to \cX\) up to reparametrzation, equipped with the uniform metric on unparametrized curves. Denote by \(\Pathset = \Pathset(\cX)\) the set of countable path collections \(\gamma \subset \pathset\) containing only finitely many macroscopic paths and equipped with a metric analogous to the one in~\cite{benoist2019scaling}; for precise definitions, see the beginning of~\Cref{sec:precompact-curves}.
	
	For \(x \in \cX\) and \(R>r>0\), denote by \(\annulus{r}{R}{x} := \{y \in \cX : r<d(x,y)<R\}\) the open annulus centered at \(x\) with the inner and outer radii \(r\) and \(R\) respectively. For a collection of paths \(\Gamma \in \Pathset\), denote by \(\ncrossings{r}{R}{x}{\Gamma}\) the number of times a curve \(\gamma \in \Gamma\) crosses the annulus \(\annulus{r}{R}{x}\). Note that a compact curve crosses each annulus only finitely many times. Since in addition every path collection \(\Gamma \subset \Pathset\) has only finitely many macroscopic paths, we deterministically have \(\ncrossings{r}{R}{x}{\Gamma} < \infty\), which motivates the following definition. For a predicate \(p : \Pathset \to \{\texttt{true}, \texttt{false}\}\) we use the shorthand notation
	\begin{align*}
		\mu[p(\Gamma)] := \mu\big(\{\Gamma \in \Pathset \cond p(\Gamma) = \texttt{true}\}\big).
	\end{align*}
	\begin{df}\label{def:regularity}
		A subset \(M \subset \cM_1(\Pathset)\) is regular at \(x \in \cX\) if for every \(R>r>0\) we have
		\begin{align}\label{eqn:path-precompactness}
			\lim_{N \to \infty}\sup_{\mu \in M}\mu[\ncrossings{r}{R}{x}{\Gamma} \ge N] = 0.
		\end{align}
		The set \(M\) is regular if it is regular at every point \(x \in \cX\).
	\end{df}
	Covering a compact geodesic metric space \(\cX\) with small annuli, the condition~\eqref{eqn:path-precompactness} ensures with high probability that the curves in \(\Gamma\) do not cross the annuli too many times. We may thus approximate \(\Gamma\) with bounded number of piecewice geodesic curves going through bounded number of centers of the annuli, which there are only finitely many of. Choosing small enough annuli we thus find finite sets \(K^\delta_\varepsilon \subset \Pathset\) satisfying the probability bound~\eqref{eqn:seq-thighness-complete} in~\Cref{thm:seq-tightness-complete}, proving precompactness of regular subsets \(M \subset \cM_1(\Pathset)\). This rather simple strategy is carried out in~\Cref{sec:proof-of-regular-iff-precompact}, where we prove that for compact geodesic metric spaces regularity completely characterizes precompactness of \(M \subset \cM_1(\Pathset)\), the second main result of this paper:
	\begin{restatable}{thm}{precompactiffregular}\label{thm:precompact-iff-regular}
		Let \((\cX, d)\) be a compact geodesic metric space. A subset \(M \subset \cM_1(\Pathset)\) is precompact if and only if it is regular.
	\end{restatable}
	Note that regularity requires uniformity of the probability \(\mu[\ncrossings{r}{R}{x}{\Gamma} \ge N]\) only over \(N\), and not over the annuli \(\annulus{r}{R}{x}\). This is in contrast to earlier precompactness results in the literature for a sequence of random curves \((\gamma_n)_{n \in \bN}\) on \(\bR^d\) \cite{aizenman1999holder,kemppainen2017random}, which require probability bounds uniform over all annuli of a given modulus. For example, \cite[Theorem 1.2]{aizenman1999holder} requires power bounds
	\begin{align}\label{eqn:Aizenman-Burchard-condition}
		\bP\Big[\ncrossings{r}{R}{x}{\gamma_n} \ge N\Big] \le K_N\Big(\frac{r}{R}\Big)^{\lambda_N}
	\end{align}
	with uniform constants \(K_N, \lambda_N \ge 0\) depending only on \(N \in \bN\), where \(\lim_{N \to \infty}\lambda_N = \infty\). In~\cite{aizenman1999holder}, the precompactness is derived using Prokhorov's theorem (\Cref{thm:Prokhorov}) by constructing compact sets \(K_\varepsilon \subset \pathset\) such that \(\bP[\mu_n \in K_\varepsilon] \ge 1-\varepsilon\) for every \(n \in \bN\). These sets in turn are constructed with the help of a characterization result,~\cite[Lemma 4.1]{aizenman1999holder}. In~\cite{aizenman1999holder}, the lemma is proven by constructing equicontinuous parametrizations from certain tortuosity bounds and invoking Arzel\`a-Ascoli theorem. \Cref{thm:precompact-iff-regular} could also be proven using a similar strategy, but with access to~\Cref{thm:seq-tightness-complete} we carry the proof without the need to pass to the space of parametrized curves.
	
	On \(\bR^d\) we can refine the regularity condition further as follows. Suppose \(M \subset \cM_1(\Pathset(\bR^d))\) is not regular. Whenever a curve crosses \(\annulus{r}{R}{x}\), it also passes through the set \(\partial B_x(\frac{R+r}{2})\). As the number of sets of diameter \(\varepsilon\) needed to cover the set \(\partial B_x(\frac{R+r}{2})\) is proportional to \(\varepsilon^{1-d}\), one thus expects the regularity of \(M\) at one of these sets to fail with probability proportional to \(\varepsilon^{d-1}\). Contraposing this heuristic yields the following result:
	\begin{restatable}{thm}{regulareuclideanthm}\label{thm:regular-curves-euclidean}
		A subset \(M \subset \cM_1(\Pathset(\bR^d))\) is regular if and only if for every \(x \in \bR^d\) and \(R > 0\) we have
		\begin{align}\label{eqn:regularity-rate-bound}
			\lim_{N \to \infty}\sup_{\mu \in M}\mu[\ncrossings{r}{R}{x}{\Gamma} \ge N] = o(r^{d-1}), \qquad \text{as } r \to 0.
		\end{align}
	\end{restatable}
	Perhaps somewhat surprisingly, \Cref{thm:regular-curves-euclidean} implies that in the precompactness condition~\eqref{eqn:Aizenman-Burchard-condition} it is sufficient that \(\lambda_N > d-1\) for some \(N \in \bN\) uniform over \(n \in \bN\) (but not necessarily over \(x \in \cX\)). The above heuristic alone is not quite enough to derive \Cref{thm:regular-curves-euclidean}, and instead proves a slightly weaker statement, \Cref{thm:ball-version}, where the rate of convergence in~\eqref{eqn:regularity-rate-bound} is replaced by \(o(g(r)r^{d-1})\) for a non-negative function \(g\) satisfying \(\lim_{r \to 0+}g(r) = 0\). To get~\Cref{thm:regular-curves-euclidean} we need to use cubes in place of balls, which is done in the end of \Cref{sec:refinement-of-regularity-condition}.
	
	As a final remark, note that if we define a compact curve \(\gamma : [0, 1] \to \bC \cong \bR^2\) by
	\begin{align*}
		\gamma(t) = \begin{cases}
			0, & t = 0, \\
			|t\sin(1/t)|e^{\ii t}, & t \in (0, 1],
		\end{cases}
	\end{align*} 
	and let \(\gamma_n\) to be any random curve such that \(\dpaths(\gamma, \gamma_n) < 1/n\) almost surely, it is straightforward to show that for any \(N \in \bN\) there exists \(R > 0\) such that \(\lim_{n \to \infty}\bP[N^{\gamma_n}_{r, R}(0) \ge N] = 1\) for any \(r \in (0, R)\), while the limit \(\lim_{n \to \infty} \gamma_n = \gamma\) holds almost surely. We have thus found a weakly converging sequence \((\gamma_n)_{n \in \bN}\) of random compact curves in \(\bR^2\) for which~\eqref{eqn:Aizenman-Burchard-condition} fails, demonstrating that~\Cref{thm:precompact-iff-regular} is a genuine generalization of the tightness result in~\cite[Theorem 1.2]{aizenman1999holder}. 
	
	\subsection*{Acknowledgments}
	I am supported by the Academy of Finland grant number 340461 “Conformal invariance in planar random geometry,” and by the Academy of Finland Centre of Excellence Programme grant number 346315 “Finnish centre of excellence in Randomness and STructures (FiRST).” 
	
	I want to thank Luis Brummet for carefully going through the proof of~\Cref{thm:complete-approximability-implies-precompactness} together with me, and Eveliina Peltola for valuable feedback of the drafts of this paper. I want to thank Kalle Kyt\"ol\"a for pointing out the similarity of the coupling constructed in~\Cref{thm:consistent-couplings-exist} to Skorokhod's representation theorem, and Jonas T\"olle for letting me know of the version~\Cref{thm:Prohorov-LeCam} of Prohorov's theorem with \emph{asymptotic} tightness as a sufficient condition. I want to thank Alex Karrila for interesting discussions related to the results and proofs in this paper, and Aapo Pajala, Xavier Poncini and Liam Hughes who helped with the presentation of~\Cref{sec:refinement-of-regularity-condition}. I want to thank Liam Hughes also for the fruitful discussions about a possible counterexample of a sequentially tight sequence of probability measures which is not asymptotically tight, which ultimately lead to the negative answer in the form of~\Cref{thm:weak-limit-tight-iff-asymptotically-tight}. I thank the handling editor for questions that improved the presentation of the article.
	
	\section{Sequential tightness implies precompactness}
	The main goal of this section is to prove~\Cref{thm:complete-approximability-implies-precompactness}. For syntactical ease we will often write the intermediate results in terms of random variables instead of probability measures.
	
	The proof of \Cref{thm:complete-approximability-implies-precompactness} is motivated by the following heuristic argument. Suppose a sequence \((X_n)_{n \in \bN}\) of \(\cX\)-valued random variables is sequentially tight with respect to the compact sets \((K^\delta_\varepsilon)_{\delta,\varepsilon > 0}\). Taking ``sufficiently dense'' finite subsets \(F^\delta_\varepsilon \subset K^\delta_\varepsilon\) (in the sence of~\eqref{eqn:delta-dense-definition}) sequential tightness of \((X_n)_{n \in \bN}\) holds also with respect to sets \((F^\delta_\varepsilon)_{\varepsilon > 0}\).  Hence, for each \(\delta > 0\) with probability \(1-\varepsilon\) there exists a subsequence \((X_{n_j})_{j \in \bN}\) which eventually gets \(\delta\)-close to a point \(y^\delta_\varepsilon \in F^\delta_\varepsilon\). After diagonal extraction over \(\delta\) the points \((y^\delta_\varepsilon)_{\delta > 0}\) form a Cauchy sequence in the complete set \(F_\varepsilon := \overline{\bigcup_{\delta > 0} F^\delta_\varepsilon} \subset K_\varepsilon\), so they converge to some point \(\lim_{\delta \to 0} y^\delta_\varepsilon\). Since \(X_{n_j}\) eventually stays \(\delta\)-close to each \(y^\delta_\varepsilon\), the limit \(\lim_{j \to \infty} X_{n_j} = \lim_{\delta \to 0} y^\delta_\varepsilon\) hence exists. All this happens with probability \(1-\varepsilon\), so taking \(\varepsilon \to 0\) yields almost sure existence of a subsequence \((n_j)_{j \in \bN}\) along which \((X_{n_j})_{j \in \bN}\) converges to a point in \(\bigcup_{\varepsilon > 0} F_\varepsilon\).
	
	Let us already point out that the above argument is agnostic about the coupling of the random variables \((X_n)_{n \in \bN}\). Without specifying the coupling there is no hope for the subsequence \((n_j)_{j \in \bN}\) to be independent of the realizations of \((X_n)_{n \in \bN}\), which is required for us to be able to take weak limit along the subsequence. The rest of this section is thus devoted to constructing a good coupling and making the heuristic argument precise. For convenience to the later parts of the proof we will choose the sets \(F^\delta_\varepsilon\) consistently as described by the following lemma.
	\begin{lem}\label{thm:finite-approximability}
		Suppose a sequence \((\mu_n)_{n \in \bN}\) in \(\cM_1(\cX)\) is sequentially tight. Then there exists a collection of finite sets \((F^\delta_\varepsilon)_{\delta, \varepsilon > 0}\) which is increasing with decreasing \(\delta\) and \(\varepsilon\), and along which \((\mu_n)_{n \in \bN}\) is sequentially tight. Furthermore, \(F_\varepsilon := \overline{\bigcup_{\delta > 0}F^\delta_\varepsilon}\) is a separable and complete subspace of \(\cX\) for every \(\varepsilon > 0\).
	\end{lem}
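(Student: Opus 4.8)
The plan is to construct the finite sets $F^\delta_\varepsilon$ by taking sufficiently dense finite nets inside the given compact sets $K^\delta_\varepsilon$, then enforce monotonicity by taking unions over a countable grid of parameters. First I would fix, for each $\varepsilon > 0$, the collection $(K^\delta_\varepsilon)_{\delta > 0}$ witnessing sequential tightness. For each pair $(\delta, \varepsilon)$, since $K^{\delta/2}_\varepsilon$ is compact I can choose a finite $\frac{\delta}{2}$-net $N^\delta_\varepsilon \subset K^{\delta/2}_\varepsilon$, i.e.\ a finite set with $K^{\delta/2}_\varepsilon \subset B_{N^\delta_\varepsilon}(\delta/2)$. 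Then $B_{K^{\delta/2}_\varepsilon}(\delta/2) \subset B_{N^\delta_\varepsilon}(\delta)$, so by monotonicity of measures and item~\ref{item:sequential-tightness-bound} of \Cref{def:sequential-tightness} applied with modulus $\delta/2$,
\begin{align*}
	\liminf_{n \to \infty}\mu_n\big(B_{N^\delta_\varepsilon}(\delta)\big) \ge \liminf_{n \to \infty}\mu_n\big(B_{K^{\delta/2}_\varepsilon}(\delta/2)\big) \ge 1 - \varepsilon.
\end{align*}
Thus the finite sets $N^\delta_\varepsilon$ already satisfy the $\varepsilon$-tightness bound; what remains is to arrange monotonicity and verify the completeness/separability claim.

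To obtain a collection increasing as $\delta$ and $\varepsilon$ decrease, I would restrict attention to the countable index set $\delta, \varepsilon \in \{2^{-k} : k \in \bN\}$ and define $F^{2^{-j}}_{2^{-i}} := \bigcup_{k \ge j}\bigcup_{\ell \ge i} N^{2^{-k}}_{2^{-\ell}}$; wait—this is an infinite union and need not be finite, so instead I would define it as a \emph{finite} union $F^{2^{-j}}_{2^{-i}} := \bigcup_{k \le j}\bigcup_{\ell \le i} N^{2^{-k}}_{2^{-\ell}}$ over the finitely many coarser parameters, which is finite, is manifestly increasing as $j$ and $i$ grow (i.e.\ as $\delta, \varepsilon$ decrease), and still contains $N^{2^{-j}}_{2^{-i}}$, hence still satisfies $\liminf_n \mu_n(B_{F^{2^{-j}}_{2^{-i}}}(2^{-j})) \ge 1 - 2^{-i}$ by the same monotonicity argument. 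For general $\delta > 0$ and $\varepsilon > 0$ I then set $F^\delta_\varepsilon := F^{2^{-j}}_{2^{-i}}$ where $2^{-j} \le \delta$ and $2^{-i} \le \varepsilon$ are the largest such dyadic numbers; one checks $B_{F^{2^{-j}}_{2^{-i}}}(2^{-j}) \subset B_{F^\delta_\varepsilon}(\delta)$ and $1 - 2^{-i} \ge 1 - \varepsilon$, so the tightness bound \ref{item:sequential-tightness-bound} persists, and monotonicity in $\delta, \varepsilon$ is inherited from the dyadic case.

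Finally I would verify \ref{item:sequential-tightness-complete} and the added assertions about $F_\varepsilon = \overline{\bigcup_{\delta > 0} F^\delta_\varepsilon}$. Separability is immediate: $\bigcup_{\delta > 0}F^\delta_\varepsilon = \bigcup_{j \in \bN} F^{2^{-j}}_{2^{-i}}$ is a countable union of finite sets, hence countable, and its closure $F_\varepsilon$ is therefore separable. For completeness, observe that by construction each $N^\delta_\varepsilon \subset K^{\delta/2}_\varepsilon \subset K_\varepsilon := \overline{\bigcup_{\delta>0}K^\delta_\varepsilon}$, so $\bigcup_{\delta>0}F^\delta_\varepsilon \subset K_\varepsilon$ and thus $F_\varepsilon \subset K_\varepsilon$ (as $K_\varepsilon$ is closed). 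Since $F_\varepsilon$ is closed in $\cX$, it is a closed subset of the complete metric space $K_\varepsilon$ (complete by hypothesis \ref{item:sequential-tightness-complete} for the original sets), hence complete itself. I expect the only real subtlety—hardly an obstacle—to be bookkeeping the two competing monotonicity requirements while keeping the sets finite; the replacement of $K^\delta_\varepsilon$ by the net inside $K^{\delta/2}_\varepsilon$ rather than $K^\delta_\varepsilon$ itself is the one point where care is needed, since a $\delta$-net of $K^\delta_\varepsilon$ would only give the bound $B_{K^\delta_\varepsilon}(\delta) \subset B_{N}(2\delta)$, degrading the modulus; routing through the modulus $\delta/2$ avoids this at no cost because the hypothesis is quantified over all $\delta > 0$.
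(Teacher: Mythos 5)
Your construction follows essentially the same route as the paper: a finite $\tfrac{\delta}{2}$-net of $K^{\delta/2}_\varepsilon$, then unions over a countable grid of all coarser parameters to enforce monotonicity, then extension to arbitrary $\delta,\varepsilon$ by rounding down to the grid. The dyadic grid versus the paper's $\{1/k\}$-grid is immaterial, and your tightness bound, monotonicity and separability steps are correct. The genuine gap is in the completeness verification. You assert that $N^\delta_\varepsilon \subset K^{\delta/2}_\varepsilon \subset K_\varepsilon$ implies $\bigcup_{\delta>0}F^\delta_\varepsilon \subset K_\varepsilon$. It does not: by your own definition $F^{2^{-j}}_{2^{-i}} = \bigcup_{k\le j}\bigcup_{\ell\le i} N^{2^{-k}}_{2^{-\ell}}$, so $\bigcup_{\delta>0}F^\delta_\varepsilon$ contains the nets $N^{2^{-k}}_{2^{-\ell}}$ for \emph{all} coarser tightness levels $\ell\le i$, and these lie in $K^{2^{-k-1}}_{2^{-\ell}} \subset K_{2^{-\ell}}$, not in $K_\varepsilon$. \Cref{def:sequential-tightness} imposes no relation between the families $(K^\delta_{\varepsilon'})_{\delta>0}$ for different values of $\varepsilon'$, so in general $K_{2^{-\ell}} \not\subset K_\varepsilon$ and the claimed inclusion fails; the very unions you introduced to gain monotonicity in $\varepsilon$ are what break it.

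The repair is short and is precisely how the paper closes its proof: the correct containment is $F_\varepsilon \subset \bigcup_{\ell\le i}\overline{\bigcup_{k\in\bN} N^{2^{-k}}_{2^{-\ell}}} \subset \bigcup_{\ell\le i} K_{2^{-\ell}}$, a \emph{finite} union of complete subspaces. One then needs the (easy, but not free) fact that a finite union of complete subspaces is complete — any Cauchy sequence has infinitely many terms in one of them, hence converges there — after which $F_\varepsilon$, being closed in $\cX$ and contained in this complete set, is itself complete. With that substitution your argument goes through; as written, the completeness claim rests on a false inclusion.
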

	\begin{proof}
		Suppose that the sequence \((\mu_n)_{n \in \bN}\) is sequentially tight along compact sets \((K^\delta_\varepsilon)_{\delta > 0}\). By compactness, for each \(\varepsilon, \delta > 0\) there exists a \(\frac{\delta}{2}\)-dense finite subset \(\tilde F^\delta_\varepsilon\) of \(K^{\delta/2}_\varepsilon\), meaning
		\begin{align}\label{eqn:delta-dense-definition}
			K^{\delta/2}_\varepsilon \subset B_{\tilde F^\delta_\varepsilon}(\delta/2)
		\end{align} 
		If \(\dist(x, K^{\delta/2}_\varepsilon) \le \frac{\delta}{2}\) we can find points \(k \in K^{\delta/2}_\varepsilon\) and \(y \in \tilde F^\delta_\varepsilon\) such that \(d(x, k) \le \frac{\delta}{2}\) and \(d(k, y) \le \frac{\delta}{2}\), hence by triangle inequality \(\dist(x, \tilde F^\delta_\varepsilon) \le d(x, y) \le \delta\). We thus conclude that \(B_{K^{\delta/2}_\varepsilon}(\delta/2) \subset B_{\tilde F^\delta_\varepsilon}(\delta)\), hence by monotonicity we get
		\begin{align}\label{eqn:tildeF-inequality}
			\liminf_{n \to \infty} \mu_n\big(B_{\tilde F^\delta_\varepsilon}(\delta)\big) \ge \liminf_{n \to \infty} \mu_n\big(B_{K^{\delta/2}_\varepsilon}(\delta/2)\big) \ge 1-\varepsilon,
		\end{align}
		where the last inequality holds by sequential tightness. For each \(\varepsilon, \delta > 0\), consider the sets
		\begin{align*}
			F^\delta_\varepsilon := \bigcup_{j=1}^{\lceil 1/\varepsilon \rceil}\bigcup_{k=1}^{\lceil 1/\delta \rceil}\tilde F^{1/k}_{1/j},
		\end{align*} 
		which are finite as finite unions of finite sets. They also are clearly increasing with decreasing \(\varepsilon\) and \(\delta\). Since \(\tilde F^{1/\lceil 1/\delta \rceil}_{1/\lceil 1/\varepsilon \rceil} \subset F^\delta_\varepsilon\), we get
		\begin{align*}
			\liminf_{n \to \infty} \mu_n\big(B_{F^\delta_\varepsilon}(\delta)\big) \ge \liminf_{n \to \infty}\mu_n\Big(B_{\tilde F^{1/\lfloor 1/\delta \rfloor}_{1 / \lfloor 1 / \varepsilon \rfloor}}\big(\tfrac{1}{\lfloor 1/\delta \rfloor}\big)\Big) \ge 1- \tfrac{1}{\lceil 1/\varepsilon \rceil} \ge \varepsilon,
		\end{align*}
		where the second inequality follows from~\eqref{eqn:tildeF-inequality}. Furthermore, we have
		\begin{align}\label{eqn:Feps}
			F_\varepsilon := \overline{\bigcup_{\delta > 0} F^\delta_\varepsilon} = \bigcup_{j=1}^{\lceil 1/\varepsilon \rceil} \overline{\bigcup_{k=1}^\infty \tilde F^{1/k}_{1/j}} \subset \bigcup_{j=1}^{\lceil 1/\varepsilon \rceil} \overline{\bigcup_{\delta > 0} K^\delta_{1/j}} =  \bigcup_{j=1}^{\lceil 1/\varepsilon \rceil} K_{1/j}.
		\end{align}
		The sets \(K_{1/j}\) are complete subspaces of \(\cX\), hence so is the finite union \(\bigcup_{j=1}^{\lceil 1/\varepsilon \rceil} K_{1/j}\) of them. Consequently, \(F_\varepsilon\) is also complete as a closed subset of the complete space \(\bigcup_{j=1}^{\lceil 1/\varepsilon \rceil} K_{1/j}\). Finally, the dense subset \(\bigcup_{\delta > 0}F^\delta_\varepsilon = \bigcup_{n \in \bN}F^{1/n}_\varepsilon\) of \(F_\varepsilon\) is a countable union of finite sets, proving separability of \(F_\varepsilon\).
	\end{proof}
	
	\subsection{Consistent couplings}
	We next build the coupling of the random variables \((X_n)_{n \in \bN}\) required for us to find weakly convergent subsequences. Given the countably many distinct finite sets \((F^\delta_\varepsilon)_{\delta, \varepsilon > 0}\) from \Cref{thm:finite-approximability}, the idea is to maximize the probability that \(X_n\) and \(X_m\) are closest to the same point in \(F^\delta_\varepsilon\) for sufficiently large \(n\) and \(m\) along a fixed subsequence. In fact, such a coupling can be considered for any countable collection of finite sets in place of \((F^\delta_\varepsilon)_{\delta, \varepsilon > 0}\) as we next describe.
	
	Let \((F^k)_{k \in \bN}\) be a sequence of finite subsets of \(\cX\). Equip each \(F^k\) with a total order \(\le_k\), and let \(\arg\min_{y \in F^k}(y,x)\) be the minimal element in \((F^k, \le_k)\) minimizing its distance to \(x\):
	\begin{align*}
		{\arg\min}_{y \in F^k}d(y,x) = \min\{y' \in F^k : d(y',x) = \dist(x, F^k)\}
	\end{align*}
	It is easy to see that the map \(x \mapsto \arg\min_{y \in F^k}d(y,x)\) is measurable, so for each \(n, k \in \bN\) \(Y^k_n := \arg\min_{y \in F^k}d(y,X_n)\) is a random variable. We say that a coupling of \((X_n)_{n \in \bN}\) is \((F^k)_{k \in \bN}\) consistent if under it for every \(k \in \bN\) the limit \(\lim_{n \to \infty}Y^k_n\) exists almost surely\footnote{The condition for consistent coupling depends on the choice of total orders \(\le_k\) which we leave implicit.}.
	\begin{lem}\label{thm:consistent-couplings-exist}
		Let \((X_n)_{n \in \bN}\) be a sequence of \(\cX\)-valued random variables and \((F^k)_{k \in \bN}\) a sequence of finite subsets of \(\cX\). Then there is a subsequence \((n_j)_{j \in \bN}\) such that \((X_{n_j})_{j \in \bN}\) admits a \((F^k)_{k \in \bN}\) consistent coupling.
	\end{lem}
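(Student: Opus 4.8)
The plan is to reduce the construction to Skorokhod's representation theorem applied not on \(\cX\) itself but on a compact product space assembled from the sets \((F^k)_k\). Write \(\phi_k\colon\cX\to F^k\) for the measurable selection \(\phi_k(x)=\arg\min_{y\in F^k}d(y,x)\) already introduced, so \(Y^k_n=\phi_k(X_n)\), and set \(\cF:=\prod_{k\in\bN}F^k\) with each \(F^k\) carrying the discrete topology; then \(\cF\) is a compact metrizable space and \(\Phi:=(\phi_k)_{k\in\bN}\colon\cX\to\cF\) is measurable. The key remark is that a coupling of a subsequence \((X_{n_j})_j\) is \((F^k)_k\) consistent if and only if under it the \(\cF\)-valued random variables \(\Phi(X_{n_j})\) converge almost surely as \(j\to\infty\): convergence in the product topology of \(\cF\) is coordinatewise convergence in the discrete spaces \(F^k\), i.e. eventual constancy of \(Y^k_{n_j}\) for each fixed \(k\). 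So it is enough to produce a subsequence together with a coupling of it along which \(\Phi(X_{n_j})\) converges a.s.

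First I would push forward the laws: \(\nu_n:=\Phi_*\mathrm{Law}(X_n)\in\cM_1(\cF)\) is the law of \((Y^k_n)_{k\in\bN}\). Since \(\cF\) is compact metric, \(\cM_1(\cF)\) is weakly sequentially compact by Prokhorov's theorem, so there is a subsequence \((n_j)_j\) with \(\nu_{n_j}\xrightarrow{w}\nu\) for some \(\nu\in\cM_1(\cF)\). (One could instead extract by a plain diagonal argument making \(\nu_n\) of each finite cylinder converge and then build \(\nu\) by Kolmogorov's extension theorem.) Applying Skorokhod's representation theorem on the separable metric space \(\cF\) then furnishes a probability space carrying \(\cF\)-valued random variables \(W_j\ (j\in\bN)\) and \(W\) with \(W_j\sim\nu_{n_j}\), \(W\sim\nu\), and \(W_j\to W\) almost surely.

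Next I would lift each \(W_j\) to an \(\cX\)-valued variable with the correct marginal law. For \(\vec f=(f_1,\dots,f_j)\in F^1\times\cdots\times F^j\) put \(C^j_{\vec f}:=\bigcap_{i=1}^j\phi_i^{-1}(f_i)\subset\cX\); these Borel sets partition \(\cX\), and the first \(j\) coordinates of \(W_j\) have the same law \(\big(\mathrm{Law}(X_{n_j})(C^j_{\vec f})\big)_{\vec f}\) as \((\phi_1(X_{n_j}),\dots,\phi_j(X_{n_j}))\). Since this partition is finite, the conditional measures \(\mathrm{Law}(X_{n_j})(\,\cdot\mid C^j_{\vec f})\) are ordinary probability measures (with an arbitrary convention on cells of zero mass), so I can enlarge the probability space — adjoining \(\cX^{\bN}\) as extra coordinates and defining the measure through Markov kernels, as justified by the Ionescu--Tulcea theorem — to carry variables \(\tilde X_{n_j}\) which, conditionally on the \(W\)-data, are independent over \(j\) with \(\tilde X_{n_j}\) drawn from \(\mathrm{Law}(X_{n_j})(\,\cdot\mid C^j_{((W_j)_1,\dots,(W_j)_j)})\). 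Averaging over \(\vec f\) gives \(\tilde X_{n_j}\sim\mathrm{Law}(X_{n_j})\), so \((\tilde X_{n_j})_j\) is a coupling of the subsequence, and by construction \(\tilde X_{n_j}\in C^j_{((W_j)_1,\dots,(W_j)_j)}\) a.s., whence \(\phi_i(\tilde X_{n_j})=(W_j)_i\) for all \(i\le j\). Fixing \(k\) and letting \(j\to\infty\) through \(j\ge k\) then yields \(Y^k_{n_j}=\phi_k(\tilde X_{n_j})=(W_j)_k\), which equals \(W_k\) for all large \(j\) since \(W_j\to W\); hence \(\lim_{j\to\infty}Y^k_{n_j}=W_k\) exists a.s. for every \(k\), i.e. the coupling is \((F^k)_k\) consistent.

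The step I expect to be the main obstacle — and the reason the lift must proceed one level at a time rather than in a single stroke — is that \(\cX\) is only a metric space, not assumed separable or complete, hence not necessarily standard Borel; consequently \(\mathrm{Law}(X_{n_j})\) need not disintegrate over the \(\sigma\)-algebra \(\sigma(\phi_k:k\in\bN)=\Phi^{-1}(\mathcal B(\cF))\), whose atoms are the (in general uncountably many) fibres of \(\Phi\). The remedy is that every disintegration actually used is over a \emph{finite} \(\sigma\)-algebra \(\sigma(\phi_1,\dots,\phi_j)\), which is unconditionally available, while everything that genuinely needs a well-behaved space — Prokhorov, Skorokhod, Kolmogorov extension — is carried out on the benign space \(\cF\), never on \(\cX\). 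Minor care is also needed for cells \(C^j_{\vec f}\) of zero \(\mathrm{Law}(X_{n_j})\)-mass, but these are harmless since the corresponding event \(\{((W_j)_1,\dots,(W_j)_j)=\vec f\}\) is then null.
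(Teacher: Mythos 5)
Your proposal is correct: the quantization maps \(\phi_k\), the passage to a subsequence, and the final lift by resampling \(X_{n_j}\) from its conditional law on the finite cells \(C^j_{\vec f}=\bigcap_{i\le j}\phi_i^{-1}(f_i)\) all match the skeleton of the paper's argument, and you handle the genuine danger point correctly — \(\cX\) is an arbitrary metric space, so disintegration is only ever performed over finite partitions, while all compactness/representation machinery lives on the benign space \(\prod_k F^k\). Where you genuinely diverge is in how the quantized variables are coupled. The paper does this by hand: after diagonal extraction making \(\bP[X_{n_j}\in A]\) converge for every cell \(A\) in the refined covers \(\cS_k\), it encodes the cell of \(X_{n_j}\) by nested subintervals \(I_j(A)\subset[0,1]\) whose endpoints converge, and uses a single uniform variable \(\xi\) to select the cell simultaneously for all \(j\); the a.s. eventual constancy of \(\tilde Y^k_j\) then follows because \(\xi\) lands in the open limit interval \((P^-(A),P^+(A))\) with full probability. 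You instead push the laws forward to the compact metrizable product \(\cF=\prod_k F^k\), extract a weakly convergent subsequence by Prokhorov, and invoke Skorokhod's representation theorem there, reading off eventual constancy of each coordinate from a.s. convergence in the product of discrete spaces; the lift then needs Ionescu--Tulcea (or a product-of-kernels construction), which indeed requires no topological assumptions on \(\cX\). The trade-off: your route is shorter and conceptually transparent, at the price of invoking Prokhorov, Skorokhod and Ionescu--Tulcea as black boxes, whereas the paper's construction is self-contained and in effect re-proves the needed discrete case of Skorokhod's theorem explicitly (the author acknowledges exactly this similarity); the paper's single-\(\xi\) coupling also produces the consistent coupling for the whole subsequence at once without an auxiliary limiting law \(\nu\). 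Both are valid proofs of \Cref{thm:consistent-couplings-exist}.
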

	\begin{proof}
		For \(k \in \bN\) and \(y \in F^k\), let \(S_k(y) = \{x \in \cX : \arg\min_{y' \in F^k}d(y',x) = y\}\), and let \(\cS'_k := \{S_k(y)\}_{y \in F^k}\). Note that each \(\cS'_k\) is a finite exact cover of \(\cX\), hence so are the refined covers \(\cS_k := \{\bigcap_{j=1}^k A_j : A_j \in \cS'_j\}\); without loss of generality we assume \(\cS_0 := \{\cX\}\). Since \(\cS := \bigcup_{k \in \bN}\cS_k\) is countable, by diagonal extraction we can find a subsequence \((n_j)_{j \in \bN}\) such that \(\lim_{j \to \infty}\bP[X_{n_j} \in A]\) exists for every \(A \in \cS\). Fix total orders \(\le^\cS_k\) on the sets \(\cS_k\) such that each \(A, B \in \cS_{k+1}\) and \(A', B' \in \cS_k\) satisfy
		\begin{align*}
			A \subset A', B \subset B' \text{ and } A \le^\cS_{k+1} B \quad \implies \quad A' \le^\cS_k B'
		\end{align*}
		For each \(j, k \in \bN\) and \(A \in \cS_k\), let
		\begin{align*}
			p_j(A) :=\;& \bP[X_{n_j} \in A], & P^-_j(A) :=\;& \sum_{B <^\cS_k A} p_j(B), & P^+_j(A) :=\;& P^-_j(A) + p_j(A),\\
			p(A) :=\;& \lim_{j \to \infty} p_j(A), & P^-(A) :=\;& \lim_{j \to \infty} P^-_j(A), & P^+(A) := \;& \lim_{j \to \infty} P^+_j(A).
		\end{align*}
		The first limit \(p_j(A) \to p(A)\) exists by the choice of the subsequence \((n_j)_{j \in \bN}\), while the other two limits exist as finite sums of \(p_j(B)\)'s. For \(j, k \in \bN\), and \(A \in \cS\), let
		\begin{align*}
			I_j(A) =\;& \Big[P^-_j(A), P^+_j(A)\Big), & I(A) =\;& \bigcap_{i \ge 0}\bigcup_{\ell \ge i} I_\ell(A).
		\end{align*}
		By the choice of the total orders \(\le^\cS_k\), for each \(k < k'\) and \(A \in \cS_k\) we have
		\begin{align*}
			I_j(A) = \bigcup_{\substack{A' \in \cS_{k'} \\ A' \subset A}} I_j(A').
		\end{align*}
		Since the endpoints of the intervals \(I_j(A)\) converge, we also get
		\begin{align*}
			\Big(P^-(A), P^+(A)\Big) \subset I(A) \subset \Big[P^-(A), P^+(A)\Big].
		\end{align*}
		For each \(j \in \bN\) and \(A \in \cS_j\) with \(\bP[X_j \in A] > 0\), let \(\tilde X_j(A)\) be an independent \(\cX\)-valued random variable with the law
		\begin{align*}
			\bP[\tilde X_j(A) \in B] = \bP[X_{n_j} \in B \cond X_{n_j} \in A], \qquad \text{for every Borel set } B \subset \cX.
		\end{align*}
		For each \(j \in \bN\) and \(x \in [0,1]\), denote by \(A_j(x)\) the set in \(\cS_j\) such that \(x \in I_j(A_j(x))\). Let \(\xi\) be a uniform random variable on \([0,1]\), and for each \(j \in \bN\) consider the following random variables:
		\begin{align*}
			\tilde X_j = \tilde X_j(A_j(\xi)), \qquad \tilde Y^k_j = \argmin_{y \in F^k}d(y, \tilde X_j).
		\end{align*}
		Since \(\bP[\xi \in I_j(A)] = P^+_j-P^-_j = p_j(A) = \bP[X_{n_j} \in A]\), for any \(k \le j\) and \(A \in \cS_k\) we get
		\begin{align*}
			\bP[\tilde X_j \in B \cond \xi \in I_k(A)] =\;& \sum_{\substack{A' \in \cS_j \\ A' \subset A}} \bP[\tilde X_j \in B \cond \xi \in I_j(A')] ~ \bP[\xi \in I_j(A')]\\
			=\;& \sum_{\substack{A' \in \cS_j \\ A' \subset A}} \bP[X_{n_j} \in B \cond X_{n_j} \in A'] ~ p_j(A')\\
			=\;& \bP[X_{n_j} \in B \cond X_{n_j} \in A].
		\end{align*}
		Choosing \(k=0\) and \(A = \cX\) above shows that \(\tilde X_j\) is distributed as \(X_{n_j}\), and consequently \(\tilde Y^k_j\) is distributed as \(Y^k_{n_j}\). By choosing \(k \le j\) and \(B = A\) above we get \(\bP[\tilde X_j \in A \cond \xi \in I_j(A)] = 1\), hence for any \(k \le j\) we get
		\begin{align*}
			\sum_{A \in \cS_k} \bP[\{\tilde X_j \in A\} \cap \{\xi \in I_k(A)\}] = \sum_{A \in \cS_k} \bP[\xi \in I_j(A)] = 1.
		\end{align*}
		As the sum of probabilities is over pairwise disjoint events, we conclude that \(\{\tilde X_j \in A\}\) is almost surely equivalent with \(\{\xi \in I_k(A)\}\) for every \(k \le j\) and \(A \in \cS_k\). Recall that every \(A \in \cS_k\) is contained in exactly one \(S_k(y)\) for some \(y \in F^k\). We thus get
		\begin{align*}
			\{\xi \in (P^-(A), P^+(A))\} \subset \bigcup_{j_0 \in \bN} \bigcap_{j \ge j_0} \{\tilde X_j \in A\} \subset \bigcup_{j_0 \in \bN}\bigcap_{j \ge j_0} \{\tilde Y^k_j = y\} \subset \{\lim_{j \to \infty} \tilde Y^k_j = y \text{ exists}\},
		\end{align*}
		where the first inclusion holds up to an event of measure zero. We thus get
		\begin{align*}
			\bP\Big[\lim_{j \to \infty} \tilde Y^k_j \text{ exists}\Big] \ge \bP\bigg[\bigcup_{A \in \cS_k} \Big\{\xi \in  (P^-(A), P^+(A))\Big\} \bigg] = 1,
		\end{align*}
		where the last equality holds since the intevals \((P^-(A), P^+(A))\) for \(A \in \cS_k\) cover a subset of \([0,1]\) of full Lebesgue measure.
	\end{proof}
	\subsection{Proof of~\Cref{thm:complete-approximability-implies-precompactness}}
	In this subsection we tie the loose ends of the heuristic argument described at the beginning of this section. The heuristic argument uses the following elementary fact.
	\begin{lem}\label{thm:Cauchy-near-complete-converges}
		Let \((\cX, d)\) be a metric space, and \(K \subset \cX\) a complete subspace. If \((x_j)_{j \in \bN}\) is a Cauchy sequence in \(\cX\) satisfying \(\lim_{j \to \infty}\dist(x_j, K) = 0\), then the limit \(\lim_{j \to \infty} x_j\) exists and lies in \(K\).
	\end{lem}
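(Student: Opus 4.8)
The plan is to push the Cauchy property of $(x_j)_{j \in \bN}$ onto a companion sequence lying inside $K$, invoke completeness of $K$ to obtain a limit there, and then show $(x_j)$ shares that limit.

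First I would use the hypothesis $\lim_{j \to \infty}\dist(x_j, K) = 0$ to choose, for each $j \in \bN$, a point $y_j \in K$ with $d(x_j, y_j) \le \dist(x_j, K) + 2^{-j}$; such a point exists because $\dist(x_j, K)$ is defined as an infimum over $K$, and the extra slack $2^{-j}$ covers the case where this infimum is not attained. By construction $d(x_j, y_j) \to 0$ as $j \to \infty$. Next I would verify that $(y_j)_{j \in \bN}$ is Cauchy: for indices $j, k$ the triangle inequality gives
\[
d(y_j, y_k) \;\le\; d(y_j, x_j) + d(x_j, x_k) + d(x_k, y_k),
\]
and each of the three summands tends to $0$ as $j, k \to \infty$ — the outer two by the previous paragraph and the middle one because $(x_j)_{j \in \bN}$ is Cauchy. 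Since $K$ is complete, $(y_j)_{j \in \bN}$ converges to some $y \in K$. Finally, $d(x_j, y) \le d(x_j, y_j) + d(y_j, y) \to 0$, so $\lim_{j \to \infty} x_j = y$, which lies in $K$.

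I do not expect a genuine obstacle here: the argument is a routine triangle-inequality chase. The only point requiring a moment's care is that the distance $\dist(x_j, K)$ may fail to be realized by an element of $K$, which is precisely why the approximating points $y_j$ are chosen with a summable error term rather than as exact minimizers.
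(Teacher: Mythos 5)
Your proposal is correct and follows essentially the same route as the paper's proof: approximate each \(x_j\) by a point \(y_j \in K\) within \(\dist(x_j, K)\) plus a vanishing slack (the paper uses \(1/j\) where you use \(2^{-j}\), an immaterial difference), show \((y_j)_{j \in \bN}\) is Cauchy in \(K\), and transfer its limit back to \((x_j)_{j \in \bN}\). No gaps.
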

	\begin{proof}
		For each \(j \in \bN\), pick an element \(y_j \in K\) satisfying \(d(x_j, y_j) \le \dist(x_j, K) + \frac{1}{j}\). Then \((y_j)_{j \in \bN}\) is a Cauchy sequence in the complete space \(K\), so the limit \(y := \lim_{j \to \infty} y_j \in K\) exists. Since \(\lim_{j \to \infty}d(x_j, y_j) = 0\), the limits of \((x_j)_{j \in \bN}\) and \((y_j)_{j \in \bN}\) coincide, so \(\lim_{j \to \infty} x_j = y\).
	\end{proof}
	Now we can complete the proof of~\Cref{thm:complete-approximability-implies-precompactness}.
	\precompactnessthm*
	\begin{proof}[Proof of \Cref{thm:complete-approximability-implies-precompactness}]
		Let \((\mu_n)_{n \in \bN}\) be a precompact sequence in \(\cM_1(\cX)\), and denote by \(X_n\) a \(\cX\)-valued random variable with the law \(\mu_n\). Since sequential tightness is preserved under taking subsequences, it suffices to show the existence of a convergent subsequence for \((X_n)_{n \in \bN}\). Let \((F^\delta_\varepsilon)_{\delta, \varepsilon > 0}\) be the collection of finite sets from \Cref{thm:finite-approximability} along which \((X_n)_{n \in \bN}\) is sequentially tight. Write \(F^k := F^{1/k}_{2^{-k}}\), so that we in particular get
		\begin{align}\label{eqn:Fdist-proba-estimate}
			\limsup_{n \to \infty} \bP\Big[\dist(X_n, F^k) > \frac{1}{k}\Big] < 2^{-k}, \qquad \text{for every } k \in \bN.
		\end{align}
		By \Cref{thm:consistent-couplings-exist}, after passing to a subsequence we may assume \((X_n)_{n \in \bN}\) is \((F^k)_{k \in \bN}\) consistently coupled. Write 
		\begin{align*}
			Y^k_n := \argmin_{y \in F^k} d(y, X_n), \quad \text{ and } \quad Y^k := \lim_{n \to \infty} Y^k_n,
		\end{align*}
		as in \Cref{thm:consistent-couplings-exist}. The proof consists of three steps:
		\begin{enumerate}[label=(\roman*)]
			\item \label{item:Yk-is-Cauchy} Show that \((Y^k)_{k \in \bN}\) is almost surely a Cauchy sequence.
			\item \label{item:Xnk-is-close-to-Yk} Find a subsequence \((n_k)_{k \in \bN}\) such that \(\lim_{k \to \infty} d(X_{n_k}, Y^k) = 0\) almost surely. Together with \ref{item:Yk-is-Cauchy} this implies that \((X_{n_k})_{k \in \bN}\) is almost surely Cauchy.
			\item \label{item:Xnk-is-close-to-Keps} Show that, for every \(\varepsilon > 0\), we have \(\bP[\lim_{k \to \infty} \dist(X_{n_k}, F_\varepsilon) = 0] \ge 1-\varepsilon\), where \(F_\varepsilon = \overline{\bigcup_{\delta > 0} F^\delta_\varepsilon}\). Together with \ref{item:Xnk-is-close-to-Yk} and \Cref{thm:Cauchy-near-complete-converges} this implies that \(\lim_{k \to \infty} X_{n_k} \in F_\varepsilon\) exists with probability at least \(1-\varepsilon\), from which taking the limit \(\varepsilon \to 0\) shows almost sure existence of the limit \(\lim_{k \to \infty} X_{n_k}\).
		\end{enumerate}
		\noindent\ref{item:Yk-is-Cauchy} Recall that the sequence \((Y^k)_{k \in \bN}\) is Cauchy if and only if for every \(\delta > 0\) there exists \(k \in \bN\) such that for every \(\ell \ge k\) we have \(d(Y^k,Y^\ell) < \delta\). Take \(k \in \bN\) and \(L>k\), and use triangle inequality and union bound to estimate
		\begin{align*}
			&\bP\Big[\bigcup_{\ell=k}^L \{d(Y^k, Y^\ell) > \delta\}\Big] \\
			&\le \bP\Big[\bigcup_{\ell=k}^L\{d(Y^k_n, Y^\ell_n) > \delta/3\}\Big] + \sum_{\ell=k}^L\Big(\bP[d(Y^k, Y^k_n) > \delta/3] + \bP[d(Y^\ell, Y^\ell_n) > \delta/3]\Big)
		\end{align*}
		In the limit \(n \to \infty\) the sum vanishes due to almost sure convergence \(Y^\ell_n \xrightarrow{n \to \infty} Y^\ell\) for every \(\ell \in \bN\). To bound the first term, note that since \(F^k \subset F^\ell\) for \(\ell \ge k\), we have \(\dist(X_n, F^\ell) \le \dist(X_n, F^k)\). Together with triangle inequality we get
		\begin{align*}
			\bP\Big[\bigcup_{\ell = k}^L \{d(Y^k_n, Y^\ell_n) > \delta/3\}\Big] \le\;& \bP\Big[\bigcup_{\ell = k}^L \Big(\big\{\dist(X_n, F^k) > \delta/6\big\} \cup \big\{\dist(X_n, F^\ell) > \delta/6\big\}\Big)\Big]\\
			=\;& \bP\Big[\dist(X_n, F^k) > \delta/6\Big].
		\end{align*}
		We thus get a bound uniform in \(L\):
		\begin{align*}
			\bP\Big[\bigcup_{\ell=k}^L \{d(Y^k, Y^\ell) > \delta\}\Big] \le \limsup_{n \to \infty} \bP\Big[\bigcup_{\ell = k}^L d(Y^k_n, Y^\ell_n) > \delta/3\Big] \le \limsup_{n \to \infty}\bP\Big[\dist(X_n, F^k) > \delta/6\Big].
		\end{align*}
		Choosing \(\delta = \frac{6}{k}\) above and summing over \(k\) yields
		\begin{align*}
			\sum_{k \in \bN} \bP\Big[\bigcup_{\ell = k}^\infty d(Y^k, Y^\ell) > \frac{6}{k} \Big] =\;& \sum_{k \in \bN} \lim_{L \to \infty} \bP\Big[\bigcup_{\ell = k}^L d(Y^k, Y^\ell) > \frac{6}{k}\Big] \\
			\le\;& \sum_{k \in \bN}\limsup_{n \to \infty}\bP\Big[\dist(X_n, F^k) > \frac{1}{k}\Big] \\
			<\;& \sum_{k \in \bN} 2^{-k} < \infty,
		\end{align*}
		where the first equality follows from monotone convergence, and the second inequality from~\eqref{eqn:Fdist-proba-estimate}. By Borel-Cantelli lemma there almost surely exists \(k_0 \in \bN\) such that for every \(k \ge k_0\) we have \(d(Y^k, Y^\ell) \le \frac{6}{k}\) for every \(\ell \ge k\), proving that \((Y^k)_{k \in \bN}\) is almost surely a Cauchy sequence.
		
		\noindent\ref{item:Xnk-is-close-to-Yk} Fix \(\delta > 0\). By triangle inequality, we get
		\begin{align*}
			\bP[d(X_n, Y^k) > \delta] \le\;& \bP[\dist(X_n, Y^k_n) > \tfrac{\delta}{2}] + \bP[d(Y^k_n, Y^k) > \tfrac{\delta}{2}].
		\end{align*}
		By almost sure convergence \(\lim_{n \to \infty} Y^k_n = Y^k\) the second term vanishes in the limit \(n \to \infty\). Choosing \(\delta = \frac{2}{k}\) we thus get
		\begin{align*}
			\limsup_{n \to \infty} \bP\Big[d(X_n, Y^k) > \frac{2}{k}\Big] \le \limsup_{n \to \infty} \bP\Big[\dist(X_n, F^k) > \frac{1}{k}\Big] < 2^{-k}.
		\end{align*}
		We can thus find an increasing sequence \((n_k)_{k \in \bN}\) such that \(\bP[d(X_{n_k}, Y^k) > \frac{2}{k}] \le 2^{-k}\) for every \(k \in \bN\). In particular, we get
		\begin{align*}
			\sum_{k \in \bN} \bP\Big[d(X_{n_k}, Y^k) > \frac{2}{k}\Big] \le \sum_{k \in \bN} 2^{-k} < \infty.
		\end{align*}
		By Borel-Cantelli lemma we conclude that almost surely \(d(X_{n_k}, Y^k) > \frac{2}{k}\) only for finitely many \(k \in \bN\), which implies almost sure convergence \(\lim_{k \to \infty} d(X_{n_k}, Y^k) = 0\). Since \((Y^k)_{k \in \bN}\) is almost surely Cauchy, we conclude that \((X_{n_k})_{k \in \bN}\) is also a Cauchy sequence almost surely. 
		
		\noindent \ref{item:Xnk-is-close-to-Keps} Since \((X_{n_k})_{k \in \bN}\) is almost surely Cauchy, the limit \(D_\varepsilon := \lim_{k \to \infty} \dist(X_{n_k}, F_\varepsilon)\) exists almost surely for every \(\varepsilon > 0\). By triangle inequality, for every \(\delta > 0\) and \(k \in \bN\) we have
		\begin{align*}
			\bP[D_\varepsilon > \delta] \le\;& \bP[|D_\varepsilon-\dist(X_{n_k}, F_\varepsilon)| > \tfrac{\delta}{2}] + \bP[\dist(X_{n_k}, F_\varepsilon) > \tfrac{\delta}{2}].
		\end{align*}
		In the limit \(k \to \infty\) the first term vanishes by definition of \(D_\varepsilon\). Choosing \(\delta = \frac{2}{\ell}\) and taking \(\limsup_{k \to \infty}\) thus yields
		\begin{align*}
			\bP\Big[D_\varepsilon > \frac{2}{\ell}\Big] \le \limsup_{k \to \infty}\bP\Big[\dist(X_{n_k}, F_\varepsilon) > \frac{1}{\ell}\Big] \le \limsup_{k \to \infty}\bP\Big[\dist(X_{n_k}, F_\varepsilon^{1/\ell}) > \frac{1}{\ell}\Big] < \varepsilon,
		\end{align*}
		where the second inequality is a consequence of \(F_\varepsilon^{1/\ell} \subset F_\varepsilon\). Monotone convergence thus implies
		\begin{align*}
			\bP[D_\varepsilon > 0] = \lim_{\ell \to \infty} \bP\Big[D_\varepsilon > \frac{2}{\ell}\Big] \le \varepsilon
		\end{align*}
		Since \((X_{n_k})_{k \in \bN}\) is almost surely a Cauchy sequence, and \(F_\varepsilon\) is a complete set, by \Cref{thm:Cauchy-near-complete-converges} we get
		\begin{align*}
			\bP[\lim_{k \to \infty} X_{n_k} \text{ exists}] \ge \bP[D_\varepsilon = 0] \ge 1-\varepsilon \xrightarrow{\varepsilon \searrow 0} 1.
		\end{align*}
		We thus get almost sure (hence also weak) convergence \(X_{n_k} \xrightarrow{k \to \infty} \lim_{k \to \infty} X_{n_k}\).
	\end{proof}
	\subsection{Equivalence of sequential and asymptotic tightness}
	In this subsection we prove the equivalence of sequential and asymptotic tightness (\Cref{thm:weak-limit-tight-iff-asymptotically-tight}). In addition to~\Cref{thm:complete-approximability-implies-precompactness} we use the fact that conditioning on closed and separable subspace preserves precompactness (\Cref{thm:conditioning-preserves-precompactness}). Given a probability measure \(\mu \in \cM_1(\cX)\) and a set \(A \subset \cX\) such that \(\mu(A) > 0\), we denote by \(\conditioned{\mu}{A} \in \cM_{1}(\cX)\) the conditioned measure
	\begin{align*}
		\conditioned{\mu}{A}(B) := \frac{\mu(A \cap B)}{\mu(A)} \qquad \forall B \subset \cX \text{ Borel}.
	\end{align*}
	\begin{restatable}{prop}{conditioning}\label{thm:conditioning-preserves-precompactness}
		Let \(\cX\) be a metric space. Suppose \(\mathcal C\) is a precompact subset of \(\cM_{1}(\cX)\), and \(Y \subset \cX\) is a separable closed subset such that \(\inf_{\mu \in \mathcal C}\mu(Y) > 0\). Then the conditioned measures
		\begin{align*}
			\conditioned{\mathcal C}{Y} := \{\conditioned{\mu}{Y} \cond \mu \in \mathcal C\} \subset \cM_{1}(\cX)
		\end{align*}
		form a precompact subset of \(\cM_1(\cX)\).
	\end{restatable}
	The proof of~\Cref{thm:conditioning-preserves-precompactness} is quite lengthy and technical, which is why we defer it to~\Cref{sec:appendix}. We will also need the converse implication in the classical Prokhorov's theorem.
	\begin{theorem}[Prokhorov's theorem, e.g. {\cite[Theorems 5.1 and 5.2]{billingsley1999convergence}}]\label{thm:Prokhorov}
		Let \((\cX, d)\) be a complete separable metric space. A collection \(\mathcal A \subset \cM_1(\cX)\) of Borel probability measures on \(\cX\) is precompact if and only if it is tight: for every \(\varepsilon > 0\) there exists a compact set \(K_\varepsilon \subset \cX\) such that \(\mu(K_\varepsilon) \ge 1-\varepsilon\) for every \(\mu \in \mathcal A\).
	\end{theorem}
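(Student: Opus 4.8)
The plan is to prove the two implications separately, leaning on the machinery already developed for the ``tight $\Rightarrow$ precompact'' direction and on the classical argument for the converse. For the forward direction, since $\cX$ is separable, $\cM_1(\cX)$ is metrizable, so it suffices to show that every sequence $(\mu_n)_{n \in \bN}$ in $\mathcal{A}$ has a weakly convergent subsequence. Given the compact sets $K_\varepsilon \subset \cX$ witnessing tightness, I would set $K^\delta_\varepsilon := K_\varepsilon$ for every $\delta > 0$; then $K_\varepsilon \subset B_{K^\delta_\varepsilon}(\delta)$ gives $\liminf_{n \to \infty} \mu_n(B_{K^\delta_\varepsilon}(\delta)) \ge \liminf_{n\to\infty}\mu_n(K_\varepsilon) \ge 1 - \varepsilon$ for all $\delta$, and \Cref{thm:seq-tightness-complete} applies since $\cX$ is complete. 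Hence $(\mu_n)_{n \in \bN}$ is precompact, and as the sequence was arbitrary, so is $\mathcal{A}$.

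For the converse, precompactness $\Rightarrow$ tightness, I would run the classical argument. Fix a countable dense set $\{x_i\}_{i \in \bN} \subset \cX$; for each $k$ the open balls $B_{x_i}(1/k)$, $i \in \bN$, cover $\cX$. The crux is the claim that for every $k \in \bN$ and $\eta > 0$ there is a finite $N(k,\eta)$ with $\mu\big(\bigcup_{i=1}^{N(k,\eta)} B_{x_i}(1/k)\big) \ge 1 - \eta$ uniformly over $\mu \in \mathcal{A}$. I would prove this by contradiction: if it fails for some $k,\eta$, pick $\mu_N \in \mathcal{A}$ with $\mu_N\big(\bigcup_{i=1}^{N} B_{x_i}(1/k)\big) < 1 - \eta$ for all $N$; by precompactness extract a subsequence converging weakly to some $\mu \in \cM_1(\cX)$; for fixed $M$ and $N \ge M$ monotonicity gives $\mu_N\big(\bigcup_{i=1}^{M} B_{x_i}(1/k)\big) < 1 - \eta$, so \Cref{thm:Portmanteau} applied to the open set $\bigcup_{i=1}^{M} B_{x_i}(1/k)$ yields $\mu\big(\bigcup_{i=1}^{M} B_{x_i}(1/k)\big) \le 1 - \eta$; letting $M \to \infty$ gives $\mu(\cX) \le 1 - \eta$, contradicting $\mu \in \cM_1(\cX)$. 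Given the claim, for $\varepsilon > 0$ I would set $N_k := N(k, \varepsilon 2^{-k})$ and $K := \bigcap_{k \in \bN} \overline{\bigcup_{i=1}^{N_k} B_{x_i}(1/k)}$: this set is closed and, being covered for each $k$ by finitely many sets of diameter at most $2/k$, totally bounded, hence compact by completeness of $\cX$; moreover $\mu(\cX \setminus K) \le \sum_k \mu\big(\cX \setminus \bigcup_{i=1}^{N_k} B_{x_i}(1/k)\big) \le \sum_k \varepsilon 2^{-k} = \varepsilon$ uniformly over $\mu \in \mathcal{A}$, which is exactly tightness.

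The main obstacle is the uniform finite-subcover claim: this is the one place where precompactness is genuinely exploited, and the contradiction argument has to be arranged so that the failure of uniformity \emph{over $\mathcal{A}$} — rather than over a single fixed measure — is what produces a weak limit of mass strictly below $1$. Everything else is bookkeeping: assembling $K$ uses only that a closed totally bounded subset of a complete space is compact, and separability enters twice, once to supply the countable cover and once to metrize $\cM_1(\cX)$ in the forward direction. One should also note explicitly that the limit extracted via precompactness lies in $\cM_1(\cX)$ (not merely in $\cM_{\le 1}(\cX)$), because the constant function $1$ is an admissible test function and hence $\mu(\cX) = \lim_N \mu_N(\cX) = 1$; this is precisely what makes the contradiction in the claim bite.
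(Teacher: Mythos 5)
Your proof is correct, but note that the paper does not prove this statement at all: \Cref{thm:Prokhorov} is imported as a classical result with a citation to Billingsley (Theorems 5.1 and 5.2), so there is no internal proof to compare against. Your converse direction (precompact \(\Rightarrow\) tight) is exactly the standard Billingsley argument, with the uniformity over \(\mathcal A\) correctly located in the finite-subcover claim and the contradiction correctly extracted via the open-set half of Portmanteau (\Cref{thm:Portmanteau}, \ref{item:weak-convergence} \(\implies\) \ref{item:open-sets}) and continuity from below; the assembly of the compact set from closed totally bounded pieces is also fine. Your forward direction is a genuine shortcut through the paper's own machinery: instead of the classical Riesz/Helly-type construction of a limit measure, you set \(K^\delta_\varepsilon := K_\varepsilon\) and invoke \Cref{thm:seq-tightness-complete}. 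This is legitimate and, importantly, non-circular within the paper's logical structure, since \Cref{thm:complete-approximability-implies-precompactness} (on which that corollary rests) is proved without appealing to \Cref{thm:Prokhorov}; what the classical route buys instead is independence from the coupling construction of Section 2. Two small points you should make explicit: the metrizability of \(\cM_1(\cX)\) for separable \(\cX\) (e.g.\ via the L\'evy--Prokhorov metric) is a standard but external fact that deserves a citation, and you should state that in a metrizable space relative compactness of \(\mathcal A\) is equivalent to every sequence in \(\mathcal A\) having a subsequence converging in \(\cM_1(\cX)\), since precompactness of the \emph{set} \(\mathcal A\), not just of individual sequences, is what the theorem asserts. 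The closing remark about the test function \(1\) is harmless but redundant here, because the limit produced by precompactness already lies in \(\cM_1(\cX)\) by hypothesis.
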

	We will now prove the main result of this subsection.
	\begin{prop}\label{thm:weak-limit-tight-iff-asymptotically-tight}
		Let \((\cX, d)\) be a metric space, and \((\mu_n)_{n \in \bN}\) a sequence in \(\cM_1(\cX)\). Denote by \(\cL := \bigcap_{n \in \bN}\overline{\{\mu_k\}_{k \ge n}}\) the set of possible subsequential limits of \((\mu_n)_{n \in \bN}\); here \(\overline{\{\mu_k\}_{k \ge n}}\) is the closure of the set \(\{\mu_k \cond k \ge n\}\) in \(\cM_1(\cX)\). Then the following are equivalent:
		\begin{enumerate}[(\roman*)]
			\item\label{item:asymptotic-tightness} \((\mu_n)_{n \in \bN}\) is asymptotically tight
			\item\label{item:sequential-tightness} \((\mu_n)_{n \in \bN}\) is sequentially tight
			\item\label{item:precompact+tight-limits} \((\mu_n)_{n \in \bN}\) is precompact and \(\cL\) is tight
		\end{enumerate}
	\end{prop}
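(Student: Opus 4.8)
The plan is to prove the cyclic chain: \ref{item:asymptotic-tightness} implies \ref{item:sequential-tightness}, \ref{item:sequential-tightness} implies \ref{item:precompact+tight-limits}, and \ref{item:precompact+tight-limits} implies \ref{item:asymptotic-tightness}. Two facts will be used repeatedly: by Portmanteau's theorem, for open \(G\) the map \(\nu \mapsto \nu(G)\) is lower semicontinuous and for closed \(C\) the map \(\nu \mapsto \nu(C)\) is upper semicontinuous on \(\cM_1(\cX)\); and \(\cL\), being a closed subset of the (compact) closure of \(\{\mu_n\}_{n\in\bN}\) whenever \((\mu_n)_{n\in\bN}\) is precompact, is then itself compact.

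\textbf{The two easy implications.} For \ref{item:asymptotic-tightness} implies \ref{item:sequential-tightness}: given \(\varepsilon > 0\), take the compact \(K_\varepsilon\) from asymptotic tightness and set \(K^\delta_\varepsilon := K_\varepsilon\) for all \(\delta > 0\). Then \(B_{K_\varepsilon}(\delta)\) is an open neighbourhood of \(K_\varepsilon\), so \(\liminf_n \mu_n\big(B_{K^\delta_\varepsilon}(\delta)\big) \ge 1-\varepsilon\), and \(\overline{\bigcup_{\delta>0}K^\delta_\varepsilon} = K_\varepsilon\) is compact, hence complete. For \ref{item:precompact+tight-limits} implies \ref{item:asymptotic-tightness}: fix \(\varepsilon\) and pick a compact \(K_\varepsilon\) with \(\mu(K_\varepsilon) \ge 1-\varepsilon\) for every \(\mu \in \cL\), which exists as \(\cL\) is tight. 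I claim \(K_\varepsilon\) witnesses asymptotic tightness. If some open \(G \supset K_\varepsilon\) had \(\liminf_n \mu_n(G) < 1-\varepsilon\), then along a subsequence realizing this liminf, compactness of \(\cL\) (finite intersection property applied to the closures of the tails of that subsequence) produces a \(\mu \in \cL\); lower semicontinuity of \(\nu \mapsto \nu(G)\) then gives \(\mu(G) \le \liminf_n\mu_n(G) < 1-\varepsilon\), contradicting \(\mu(G) \ge \mu(K_\varepsilon) \ge 1-\varepsilon\).

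\textbf{The core implication \ref{item:sequential-tightness} implies \ref{item:precompact+tight-limits}.} Precompactness of \((\mu_n)_{n\in\bN}\) is exactly \Cref{thm:complete-approximability-implies-precompactness}, so it remains to prove \(\cL\) is tight. Let \((F^\delta_\varepsilon)_{\delta,\varepsilon>0}\) and \(F_\varepsilon = \overline{\bigcup_{\delta>0}F^\delta_\varepsilon}\) be the finite sets and separable complete spaces from \Cref{thm:finite-approximability}, along which \((\mu_n)_{n\in\bN}\) is still sequentially tight. First I would run the argument of \Cref{thm:sequentially-tight-limit-bounds} uniformly over \(\cL\): for each \(\delta\) the set \(\overline{B_{F^{\delta/2}_\varepsilon}(\delta/2)}\) is closed and contained in \(B_{F_\varepsilon}(\delta)\), its mass is an upper semicontinuous functional dominating \(\nu \mapsto \nu\big(B_{F^{\delta/2}_\varepsilon}(\delta/2)\big)\) (whose liminf along \((\mu_n)\) is \(\ge 1-\varepsilon\)), and \(\cL\) lies in the closure of every tail of \((\mu_n)\); hence \(\mu\big(B_{F_\varepsilon}(\delta)\big) \ge 1-\varepsilon\) for all \(\mu \in \cL\), and letting \(\delta \to 0\) (so that \(B_{F_\varepsilon}(\delta) \downarrow F_\varepsilon\)) yields \(\mu(F_\varepsilon) \ge 1-\varepsilon\) for every \(\mu \in \cL\). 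To upgrade this pointwise estimate to uniform tightness of \(\cL\), I would use that \(\cL\) is compact, hence precompact in \(\cM_{\le 1}(\cX)\), and that \(F_\varepsilon\) is separable and closed, so \Cref{thm:conditioning-preserves-precompactness} gives that \(\cL|_{F_\varepsilon}\) is precompact in \(\cM_{\le 1}(F_\varepsilon)\); since the total-mass functional is continuous and bounded below by \(1-\varepsilon\) on \(\cL|_{F_\varepsilon}\), normalizing produces a precompact family in \(\cM_1(F_\varepsilon)\), and Prokhorov's theorem (\Cref{thm:Prokhorov}) on the Polish space \(F_\varepsilon\) then yields a single compact \(C_\varepsilon \subset F_\varepsilon \subset \cX\) with \(\mu(C_\varepsilon) \ge (1-\varepsilon)^2\) for all \(\mu \in \cL\). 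Letting \(\varepsilon \to 0\) shows \(\cL\) is tight.

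\textbf{Main obstacle.} The delicate step is the passage from each \(\mu \in \cL\) being tight (which is automatic, as \(\mu\) restricted to the Polish space \(F_\varepsilon\) is Radon) to a compact set that is uniform over all of \(\cL\): this is precisely where the nontrivial input that conditioning on a separable closed subspace preserves precompactness (\Cref{thm:conditioning-preserves-precompactness}) is needed, the point being that the restriction map \(\mu \mapsto \mu|_{F_\varepsilon}\) is not weakly continuous. A secondary subtlety, relevant because \(\cM_1(\cX)\) need not be metrizable, is that a generic \(\mu \in \cL\) need not be the weak limit of a subsequence of \((\mu_n)_{n\in\bN}\); hence the mass bounds on \(\cL\) must be extracted from semicontinuity of \(\nu \mapsto \nu(C)\) and \(\nu \mapsto \nu(G)\) together with closedness of tails, rather than by quoting Portmanteau along a convergent subsequence as in \Cref{thm:sequentially-tight-limit-bounds}.
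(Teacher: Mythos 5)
Your proposal is correct and follows essentially the same route as the paper: the same cycle \ref{item:asymptotic-tightness} \(\Rightarrow\) \ref{item:sequential-tightness} \(\Rightarrow\) \ref{item:precompact+tight-limits} \(\Rightarrow\) \ref{item:asymptotic-tightness}, with precompactness from \Cref{thm:complete-approximability-implies-precompactness}, the sets \(F_\varepsilon\) from \Cref{thm:finite-approximability}, the bound \(\mu(F_\varepsilon) \ge 1-\varepsilon\) on \(\cL\), and then \Cref{thm:conditioning-preserves-precompactness} plus Prokhorov (\Cref{thm:Prokhorov}) on the Polish space \(F_\varepsilon\) to produce the uniform compact set with mass \((1-\varepsilon)^2\). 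The one genuine point of divergence is how you obtain the mass estimates on \(\cL\): the paper treats every \(\mu \in \cL\) as a subsequential weak limit and invokes \Cref{thm:sequentially-tight-limit-bounds} (and, for \ref{item:precompact+tight-limits} \(\Rightarrow\) \ref{item:asymptotic-tightness}, extracts a weakly convergent subsequence and applies \Cref{thm:Portmanteau}), whereas you work with cluster points directly, using upper/lower semicontinuity of \(\nu \mapsto \nu(C)\), \(\nu \mapsto \nu(G)\) together with the fact that \(\cL\) sits inside the closure of every tail. Your variant is slightly more careful: when \(\cX\) is not separable, \(\cM_1(\cX)\) need not be metrizable and an element of \(\cL\) need not a priori be the limit of a subsequence, so the semicontinuity-on-tails argument closes a small gap that the paper's phrasing glosses over, at the cost of a marginally longer argument.
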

	\begin{proof}
		\ref{item:asymptotic-tightness} \(\implies\) \ref{item:sequential-tightness}: Suppose \((\mu_n)_{n \in \bN}\) is asymptotically tight. Then for every \(\varepsilon > 0\) there exists a compact set \(K_\varepsilon \subset \cX\) such that for every open neighborhood \(U \subset \cX\) of \(K_\varepsilon\) we have \(\liminf_{n \to \infty}\mu_n(U) \ge 1-\varepsilon\). Setting \(K^\delta_\varepsilon = K_\varepsilon\) for every \(\delta, \varepsilon > 0\) the sequence \((\mu_n)_{n \in \bN}\) is sequentially tight along the sets \((K^\delta_\varepsilon)_{\varepsilon > 0}\). Indeed, \(K_\varepsilon = \overline{\bigcup_{\delta > 0} K^\delta_\varepsilon}\) is a compact, hence complete subspace of \(\cX\), while by asymptotic tightness we have \(\liminf_{n \to \infty}\mu_n(B_{K^\delta_\varepsilon}(\delta)) \ge 1-\varepsilon\).

		\ref{item:sequential-tightness} \(\implies\) \ref{item:precompact+tight-limits}: Assume \((\mu_n)_{n \in \bN}\) is sequentially tight. \Cref{thm:complete-approximability-implies-precompactness} gives precompactness of \((\mu_n)_{n \in \bN}\), so it remains to check tightness of \(\cL\). Since by precompactness \(\overline{\{\mu_k\}_{k \ge n}}\) is compact for every \(n \in \bN\), so is \(\cL = \bigcap_{n \in \bN}\overline{\{\mu_k\}_{k \ge n}}\). Let \((F^\delta_\varepsilon)_{\delta, \varepsilon > 0}\) be a collection of finite subsets from~\Cref{thm:finite-approximability} along which \((\mu_n)_{n \in \bN}\) is sequentially tight. By definition of \(\cL\), for any \(\mu \in \cL\) there exists a subsequence \((\mu_{n_k})_{k \in \bN}\) converging weakly to \(\mu\). By Portmanteau's theorem (\Cref{thm:Portmanteau}), for every \(\delta > 0\) we have
		\begin{align*}
			\mu\big(\overline{B_{K_\varepsilon}(\delta)}\big) \ge \mu\big(\overline{B_{K_\varepsilon^{\delta}}(\delta)}\big) \ge \liminf_{k \to \infty} \mu_{n_k}\big(\overline{B_{K_\varepsilon^{\delta}}(\delta)}\big) \ge 1-\varepsilon.
		\end{align*}
		By monotone convergence we thus get
		\begin{align*}
			\mu(K_\varepsilon) = \lim_{\delta \to 0}\mu\big(\overline{B_{K_\varepsilon}(\delta)}\big) \ge 1-\varepsilon,
		\end{align*} 
		Since \(F_\varepsilon\) is complete, it is a closed subset of \(\cX\). Since \(F_\varepsilon\) is also separable, by~\Cref{thm:conditioning-preserves-precompactness} the conditioned measures \(\conditioned{\cL}{F_\varepsilon} := \{\conditioned{\mu}{F_\varepsilon} \cond \mu \in \cL\}\) form a precompact subset of \(\cM_1(F_\varepsilon)\). Prokhorov's theorem (\Cref{thm:Prokhorov}) thus implies that \(\conditioned{\cL}{F_\varepsilon}\) is tight, so there exists a compact set \(K_\varepsilon \subset F_\varepsilon\) such that
		\begin{align*}
			\conditioned{\mu}{F_\varepsilon}(K_\varepsilon) \ge 1-\varepsilon \qquad \forall \mu \in \cL.
		\end{align*}
		Using \(\mu(F_\varepsilon) \ge 1-\varepsilon\) and \(\mu(K_\varepsilon \cap F_\varepsilon) = \mu(K_\varepsilon)\) (since \(K_\varepsilon \subset F_\varepsilon\)) yields
		\begin{align*}
			\mu(K_\varepsilon) \ge (1-\varepsilon)^2 \qquad \forall \mu \in \cL,
		\end{align*}
		proving tightness of \(\cL\).
		
		\ref{item:precompact+tight-limits} \(\implies\) \ref{item:asymptotic-tightness}: Assume \((\mu_n)_{n \in \bN}\) is precompact and \(\cL := \bigcap_{n \in \bN}\overline{\{\mu_k\}_{k \in \bN}}\) is tight. Then, for every \(\varepsilon > 0\) there exists a compact set \(K_\varepsilon \subset \cX\) such that \(\mu(K_\varepsilon) \ge 1-\varepsilon\) for every \(\mu \in \cL\). Let \(U \subset \cX\) be an open neighborhood of \(K\), and \((\mu_{n_k})_{k \in \bN}\) a subsequence satisfying \(\lim_{k \to \infty}\mu_{n_k}(U) = \liminf_{n \to \infty}\mu_n(U)\). By precompactness of the sequence \((\mu_n)_{n \in \bN}\), passing to a further subsequence we may without loss of generality assume that \((\mu_{n_k})_{k \in \bN}\) converges weakly to some \(\mu \in \cL\). By Portmanteau (\Cref{thm:Portmanteau}, \ref{item:weak-convergence} \(\implies\) \ref{item:open-sets}), we thus get
		\begin{align*}
			\liminf_{n \to \infty}\mu_n(U) = \lim_{k \to \infty}\mu_{n_k}(U) \ge \mu(U) \ge \mu(K_\varepsilon) \ge 1-\varepsilon,
		\end{align*}
		proving asymptotic tightness of \((\mu_n)_{n \in \bN}\).
	\end{proof}
	\section{Regular sequences of random collections of curves are precompact}\label{sec:precompact-curves}
	In this section we prove~Theorems \ref{thm:precompact-iff-regular} and \ref{thm:regular-curves-euclidean}. Let us begin with properly defining the notations introduced in~\Cref{sec:intro}. 
	
	Given a metric space \((\cX, d)\), denote by \(\pathset = \pathset(\cX)\) the set of compact curves \(\gamma : I^\gamma \to \cX\) up to reparametrization, and equip \(\pathset\) with the uniform metric \(\dpaths\) on unparametrized curves:
	\begin{align*}
		\dpaths(\gamma, \eta) := \inf_{\sigma} \sup_{t \in I^\gamma}d(\gamma(t), \eta(\sigma(t))),
	\end{align*}
	where the infimum is taken over increasing homeomorphisms \(\sigma : I^\gamma \to I^\eta\). (See e.g. \cite[Lemma 2.1]{aizenman1999holder} for a proof that \(\dpaths\) is a metric.) We say that \(\cX\) is a geodesic space if for any two points \(x, y \in \cX\) there exists a path \(\gamma \in \pathset\) from \(x\) to \(y\) such that \(d(x,y) = \ell(\gamma)\), where \(\ell(\gamma)\) denotes the length of \(\gamma\) with respect to the underlying metric \(d\).
	
	Denote by \(\Pathset = \Pathset(\cX)\) the set of countable path collections \(\Gamma \subset \pathset\) containing only finitely many macroscopic paths, and no trivial paths; more formally, for any \(\delta > 0\) the set
	\begin{align*}
		\Gamma(\delta) := \{\gamma \in \Gamma \cond \diam(\gamma) > \delta\}
	\end{align*}
	is finite, and \(\Gamma = \bigcup_{\delta > 0} \Gamma(\delta)\). We treat the collections \(\Gamma\) and \(\Gamma(\delta)\) as multisets, so they can contain the same element of \(\pathset\) multiple times. A matching between two path collections \(\Gamma, \tilde \Gamma \in \Pathset\) is a set \(\pi \subset \Gamma \times \tilde \Gamma\) such that the projections \(\pi \to \Gamma\) and \(\pi \to \tilde \Gamma\) are injective. We write \(\Gamma^\pi \subset \Gamma\) and \(\tilde \Gamma^\pi \subset \tilde \Gamma\) for the elements not contained in any pair in \(\pi\), and equip \(\Pathset\) with the metric
	\begin{align*}
		\dPaths(\Gamma, \tilde \Gamma) := \inf_\pi \max\bigg(\sup_{(\gamma, \tilde \gamma) \in \pi} \dpaths(\gamma, \tilde \gamma), \sup_{\gamma \in \Gamma^\pi}\diam(\gamma), \sup_{\tilde \gamma \in \tilde \Gamma^\pi} \diam(\tilde \gamma)\bigg),
	\end{align*}
	where the infimum is over all matchings \(\pi \subset \Gamma \times \tilde\Gamma\). Similar metric in the case of collections of unrooted loops appear in \cite{benoist2019scaling}. If the underlying metric space \(\cX\) is geodesic and complete, then both \(\pathset\) and \(\Pathset\) are complete; this is proven similarly to \cite[Lemma 5]{benoist2019scaling}. 
	
	For a point \(x \in \cX\) and radii \(0 < r < R < \infty\), an \(\annulus{r}{R}{x}\)-crossing is a compact path \(\gamma : [a ,b] \to \cX\) such that
	\begin{align*}
		\gamma(a, b) \subset A_{r, R}(x), \quad \text{ and } \quad \{d(x, \gamma(a)), d(x, \gamma(b))\} = \{r, R\}.
	\end{align*}
	For a path \(\gamma : I \to \cX\) and a path collection \(\Gamma \in \Pathset\), we define
	\begin{gather*}
		\crossings{r}{R}{x}{\gamma} := \big\{(a, b) \subset I \cond \gamma[a, b] \text{ is an } \annulus{r}{R}{x} \text{ crossing}\big\},\\
		\ncrossings{r}{R}{x}{\gamma} := |\crossings{r}{R}{x}{\gamma}|, \qquad \ncrossings{r}{R}{x}{\Gamma} := \sum_{\gamma \in \Gamma}\ncrossings{r}{R}{x}{\gamma}.
	\end{gather*}
	The number \(\ncrossings{r}{R}{x}{\gamma}\) counts the number of times the path \(\gamma\) crosses the annulus \(\annulus{r}{R}{x}\) and hence is independent of the parametrization of \(\gamma\). Therefore, \(\ncrossings{r}{R}{x}{\Gamma}\) is well defined.
	\subsection{Annulus crossing properties}
	Let us begin with listing some basic properties of annulus crossings made by a curve without proofs.
	\begin{lem}\label{thm:crossing-properties}
		Fix a parametrized curve \(\gamma : I \to \mathcal X\), a point \(x \in \cX\), time instances \(s, t \in I\), and radii \(0 < r < R < \infty\).
		\begin{enumerate}[label=(\roman*)]
			\item\label{item:crossing-large-diameter} If \(\gamma \cap B_x(r) \ne \emptyset\) and \(\diam(\gamma) \ge 2R\), then \(\crossings{r}{R}{x}{\gamma} \ne \emptyset\).
			\item\label{item:crossing-small-diameter-empty} If \(\diam(\gamma) < R-r\), then \(\crossings{r}{R}{x}{\gamma} = \emptyset\).
			\item\label{item:crossings-disjoint} The set \(\crossings{r}{R}{x}{\gamma}\) consists of pairwise disjoint intervals: for any distinct elements \((a, b), (a', b') \in \crossings{r}{R}{x}{\gamma}\), we have \((a, b) \cap (a', b') = \emptyset\).
			\item\label{item:crossing-containment} If \(\gamma(s) \in \overline{B_x(r)}\) and \(\gamma(t) \notin B_x(R)\), then there exists \(a, b \in [\min(s, t), \max(s, t)]\) such that \((a, b) \in \crossings{r}{R}{x}{\gamma}\)
		\end{enumerate}
	\end{lem}
	In addition to~\Cref{thm:seq-tightness-complete}, the forward implication ``regular \(\implies\) precompactness'' in~\Cref{thm:precompact-iff-regular} only uses~\cref{item:crossing-large-diameter} of~\Cref{thm:crossing-properties}. Hence, a reader interested only in this implication may skip directly to the proof of~\Cref{thm:precompact-iff-regular} in~\Cref{sec:proof-of-regular-iff-precompact}.
	
	The converse implication ``precompact \(\implies\) regular'' in~\Cref{thm:precompact-iff-regular} at least requires any collection of paths \(\Gamma \in \Pathset\) to cross each annulus only finitely many times:
	\begin{lem}\label{thm:finite-crossings}
		Every \(\Gamma \in \Pathset, x \in \cX\) and \(0 < r < R < \infty\) satisfy \(\ncrossings rRx\Gamma < \infty.\)
	\end{lem}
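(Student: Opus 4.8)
The plan is to separate the contribution of each individual curve in \(\Gamma\) from the (a priori infinite) number of curves, and to bound both. First I would reduce to finitely many curves. By \Cref{thm:crossing-properties}\ref{item:crossing-small-diameter-empty}, any \(\gamma \in \Gamma\) with \(\diam(\gamma) < R - r\) satisfies \(\crossings{r}{R}{x}{\gamma} = \emptyset\), hence \(\ncrossings{r}{R}{x}{\gamma} = 0\). Thus only curves with \(\diam(\gamma) \ge R - r\) can contribute, and all such curves lie in the set \(\Gamma(\tfrac{R-r}{2}) = \{\gamma \in \Gamma \cond \diam(\gamma) > \tfrac{R-r}{2}\}\), which is finite by the definition of \(\Pathset\). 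Therefore
\[
	\ncrossings{r}{R}{x}{\Gamma} = \sum_{\gamma \in \Gamma(\frac{R-r}{2})} \ncrossings{r}{R}{x}{\gamma}
\]
is a \emph{finite} sum, so it remains to check that each term is finite.

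Next I would bound \(\ncrossings{r}{R}{x}{\gamma}\) for a single compact curve \(\gamma\). Since the number of crossings is invariant under reparametrization, we may assume \(\gamma\) is parametrized by the compact interval \(I^\gamma = [0,1]\). The function \(f(t) := d(x, \gamma(t))\) is then continuous on \([0,1]\), hence uniformly continuous, so there is \(\eta > 0\) such that \(|s - t| \le \eta\) implies \(|f(s) - f(t)| < R - r\). For any crossing \((a, b) \in \crossings{r}{R}{x}{\gamma}\) one has \(\{f(a), f(b)\} = \{r, R\}\), so \(|f(a) - f(b)| = R - r\), forcing \(b - a > \eta\). By \Cref{thm:crossing-properties}\ref{item:crossings-disjoint} the elements of \(\crossings{r}{R}{x}{\gamma}\) are pairwise disjoint subintervals of \([0,1]\), each of length greater than \(\eta\); comparing the total length of these intervals with the length of \([0,1]\) gives \(\eta \cdot \ncrossings{r}{R}{x}{\gamma} < 1\), hence \(\ncrossings{r}{R}{x}{\gamma} < 1/\eta < \infty\). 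Together with the reduction above this proves \(\ncrossings{r}{R}{x}{\Gamma} < \infty\).

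I do not expect any genuine obstacle here; the argument is essentially routine. The only points requiring a little care are that the finiteness clause in the definition of \(\Pathset\) is phrased with a \emph{strict} diameter inequality — which is why I pass from \(R-r\) down to \(\tfrac{R-r}{2}\) — and that ``compact curve'' must be read so that the parametrizing interval \(I^\gamma\) can be taken compact, which is exactly what makes \(t \mapsto d(x,\gamma(t))\) uniformly continuous.
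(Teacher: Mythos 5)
Your proof is correct. The first step — discarding all curves of small diameter via \Cref{thm:crossing-properties}\ref{item:crossing-small-diameter-empty} and using finiteness of \(\Gamma(\delta)\) to reduce to a single compact curve — is the same reduction the paper makes (the paper works with \(\Gamma(R-r)\); your passage to \(\Gamma(\tfrac{R-r}{2})\) is actually slightly cleaner, since it sidesteps the borderline case \(\diam(\gamma)=R-r\), where the strict hypothesis of \ref{item:crossing-small-diameter-empty} does not literally apply). Where you diverge is the single-curve bound: the paper argues by contradiction, extracting infinitely many pairwise disjoint crossing intervals \((a_j,b_j)\), using compactness of \(I\) to make \(a_j\) converge and finiteness of total length to force \(b_j-a_j\to 0\), and then contradicting continuity of \(\gamma\) at the limit point because \(|d(x,\gamma(b_j))-d(x,\gamma(a_j))|=R-r\). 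You instead give a direct quantitative argument: uniform continuity of \(t\mapsto d(x,\gamma(t))\) on the compact parameter interval forces every crossing interval to have length greater than some \(\eta>0\), and disjointness (\Cref{thm:crossing-properties}\ref{item:crossings-disjoint}) then caps the number of crossings by \(1/\eta\). Both arguments rest on the same two ingredients (compactness of \(I^\gamma\) and disjointness of crossings); yours buys an explicit bound \(\ncrossings{r}{R}{x}{\gamma}<1/\eta\) in terms of a modulus of continuity and avoids the subsequence extraction, while the paper's version needs only plain continuity at a single accumulation point. Either way the statement follows, and I see no gap in your argument.
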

	\begin{proof}
		Towards a contradiction, assume \(\ncrossings{r}{R}{x}{\Gamma} = \infty\). Since \(\Gamma(R-r)\) is a finite set, and by \Cref{thm:crossing-properties}\ref{item:crossing-small-diameter-empty} any curve in \(\Gamma \setminus \Gamma(R-r)\) does not cross \(\annulus{r}{R}{x}\), we can find \(\gamma \in \Gamma(R-r)\) such that \(\ncrossings{r}{R}{x}{\gamma} = \infty\). Fix a parametrization \(\gamma : I \to \cX\). By~\Cref{thm:crossing-properties}\ref{item:crossings-disjoint} we can find a sequence \((a_j, b_j)_{j \in \bN}\) of pairwise disjoint intervals in \(\crossings{r}{R}{x}{\gamma}\). Since \(I\) is compact, passing to a subsequence we may assume that the limit \(a := \lim_{j \to \infty} a_j\) exists. Since \(I\) has finite length, \(\sum_{j \in \bN}b_j-a_j < \infty\), so \(\lim_{j \to \infty} b_j = a\). However, \(|d(x, \gamma(b_j)) - d(x, \gamma(a_j))| = R-r > 0\), which contradicts continuity of \(\gamma\) at \(a\). By contradiction, we conclude that \(\ncrossings rRx\Gamma < \infty\).
	\end{proof}
	Given a hypothetical limit \(\mu\) of a non-regular sequence \((\mu_n)_{n \in \bN}\) in \(\cM_1(\cX)\), we will use Portmanteau's theorem (\Cref{thm:Portmanteau}) to find an estimate \(p > 0\) independent of \(N \in \bN\) satisfying
	\begin{align*}
		\mu[\ncrossings{r}{R}{x}{\Gamma} \ge N] \ge \limsup_{n \to \infty}\mu_n[\ncrossings{r}{R}{x}{\Gamma} \ge N] \ge p > 0,
	\end{align*} 
	which contradicts the above~\Cref{thm:finite-crossings}. To get the estimates given by Portmanteau's theorem in the correct direction, the sets \(\{\Gamma \in \Pathset \cond \ncrossings{r}{R}{x}{\Gamma} \ge N\}\) have to be closed subsets of \(\Pathset\) (\Cref{thm:closed-crossing-set}). When proving this, the following characterization of curves with a bounded number of annulus crossings is helpful.
	\begin{lem}\label{thm:separating-sequence}
		Let \(\gamma : [0, 1] \to \cX\) be a compact curve, and let \(x \in \cX\) and \(0 < r < R < \infty\). We have \(\ncrossings{r}{ R}{x}{\gamma} \le n\) if and only if there exist time instances \(0 = s_0 < s_1 < \ldots < s_n < s_{n+1} = 1\) such that \(\gamma[s_j, s_{j+1}]\) does not cross \(\annulus rRx\) for any \(j \in \{0, 1, 2, \ldots, n\}\).
	\end{lem}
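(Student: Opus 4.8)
The plan is to prove both implications directly from the definition of an annulus crossing. For the ``if'' direction, suppose such time instances $0 = s_0 < s_1 < \ldots < s_{n+1} = 1$ exist. Any crossing interval $(a,b) \in \crossings{r}{R}{x}{\gamma}$ has the property that $\gamma(a,b) \subset \annulus{r}{R}{x}$ while $d(x,\gamma(a))$ and $d(x,\gamma(b))$ take the values $r$ and $R$ (in some order). I claim each such $(a,b)$ must contain some $s_j$ with $j \in \{1,\ldots,n\}$: otherwise $(a,b) \subset (s_j, s_{j+1})$ for a single $j$, and then $\gamma[a,b]$ would be an $\annulus{r}{R}{x}$-crossing contained in $\gamma[s_j, s_{j+1}]$, contradicting the assumption. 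Since by \Cref{thm:crossing-properties}\ref{item:crossings-disjoint} the crossing intervals are pairwise disjoint, and each contains at least one of the $n$ points $s_1, \ldots, s_n$, there can be at most $n$ of them, i.e. $\ncrossings{r}{R}{x}{\gamma} \le n$.

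For the ``only if'' direction, assume $\ncrossings{r}{R}{x}{\gamma} = m \le n$ and list the crossing intervals as $(a_1, b_1), \ldots, (a_m, b_m)$, which by \Cref{thm:crossing-properties}\ref{item:crossings-disjoint} are pairwise disjoint and hence can be ordered so that $0 \le a_1 < b_1 \le a_2 < b_2 \le \ldots \le a_m < b_m \le 1$. The natural choice is to place one separating time instant strictly inside each crossing interval, say $s_j \in (a_j, b_j)$ for $j = 1, \ldots, m$, and then add $n - m$ further arbitrary time instants to pad the list up to length $n$ (inserting them in the gaps between existing ones, or near the endpoints, keeping everything strictly increasing and away from $0$ and $1$). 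It then remains to check that no subinterval $\gamma[s_j, s_{j+1}]$ crosses $\annulus{r}{R}{x}$. If it did, the corresponding crossing interval $(a,b) \subset [s_j, s_{j+1}]$ would be one of the $(a_i, b_i)$, but $(a_i, b_i)$ contains $s_i$ in its interior, and $s_i \notin \{s_j, s_{j+1}\}$ lies strictly between consecutive separating instants only if $i$ equals $j$ or $j+1$ — in either case $s_i \in \{s_j, s_{j+1}\}$ is impossible since $s_i$ is interior to $(a_i,b_i) \subset [s_j, s_{j+1}]$ forces $s_j < s_i < s_{j+1}$, contradicting that $s_i$ is one of the listed (consecutive-labeled) instants. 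Thus no such crossing exists.

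The main technical care — and the one place where I expect to have to be precise rather than merely routine — is the bookkeeping in the ``only if'' direction: one must verify that after inserting the extra padding instants the subintervals $\gamma[s_j, s_{j+1}]$ genuinely contain no full crossing. The cleanest way to organize this is to first handle the case $m = n$ exactly (one $s_j$ interior to each $(a_j, b_j)$, using the ordering above and the fact that between $s_j$ and $s_{j+1}$ the curve can meet $\{d(x,\cdot) = r\} \cup \{d(x,\cdot) = R\}$ at most without completing a crossing, since any completed crossing would be a distinct $(a_i,b_i)$ already accounted for), and then reduce the general case $m < n$ to it by noting that refining the partition (adding more $s_j$'s) only shrinks the subintervals and therefore preserves the ``no crossing'' property. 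I would write the argument in this two-step form to avoid the index juggling becoming opaque.
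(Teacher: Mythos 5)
Your proof is correct. For the implication ``separating sequence \(\Rightarrow \ncrossings{r}{R}{x}{\gamma} \le n\)'' your argument (every crossing interval must contain one of the interior instants \(s_1, \ldots, s_n\), and the crossing intervals are pairwise disjoint by \Cref{thm:crossing-properties}\ref{item:crossings-disjoint}) is essentially the paper's, where the same pigeonhole is packaged as injectivity of the map sending a crossing \((a,b)\) to the smallest index \(j_a\) with \(a \in [s_{j_a-1}, s_{j_a}]\); the only nitpick is that the relevant containment is \([a,b] \subset [s_j, s_{j+1}]\) rather than \((a,b) \subset (s_j, s_{j+1})\), which your argument gives after taking closures. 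The genuine difference lies in the converse. You prove ``\(\ncrossings{r}{R}{x}{\gamma} \le n \Rightarrow\) a separating sequence exists'' constructively: order the finitely many pairwise disjoint crossing intervals, place one instant strictly inside each, pad with further instants, and note that refining a partition preserves the no-crossing property of its subintervals; the verification is exactly as you say, since any crossing subinterval of \([s_j, s_{j+1}]\) would be one of the listed crossing intervals and would force a listed instant strictly between the consecutive instants \(s_j\) and \(s_{j+1}\). The paper's written proof, by contrast, handles its second half by assuming \(n < \ncrossings{r}{R}{x}{\gamma}\) \emph{together with} a sequence as in the statement and deriving a contradiction; as written this only re-establishes the first implication in contrapositive form and never actually produces the separating sequence from the crossing bound — which is precisely the direction invoked later in the proof of \Cref{thm:closed-crossing-set}. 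So your explicit construction is not merely an alternative route: it supplies the half of the equivalence that the paper's argument leaves implicit. Your worry about index bookkeeping is unfounded provided you adopt the two-step organization you propose at the end (first one instant per crossing interval, then refine), which is fully rigorous and the cleaner way to write it.
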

	\begin{proof}
		First, suppose \(0 = s_0 < s_1 < \ldots < s_{n+1} = 1\) is as in the statement. For each interval \((a, b) \in \crossings rRx\gamma\), let \(j_a \in \{1, 2, \ldots, n\}\) be the smallest index such that \(a \in [s_{j_a-1}, s_{j_a}]\). By~\Cref{thm:crossing-properties}\ref{item:crossings-disjoint} any distinct \((a, b), (a', b') \in \crossings{r}{R}{x}{\gamma}\) are disjoint, hence we may without loss of generality assume \(b < a'\). In particular, \([s_{j_{a}-1}, s_{j_{a'}}]\) contains the intervals \((a, a') \supset (a, b) \in \crossings{r}{R}{x}{\gamma}\), so by~\Cref{thm:crossing-properties}\ref{item:crossing-containment} \(\gamma[s_{j_a-1}, s_{j_{a'}}]\) crosses \(\annulus{r}{R}{x}\). This is possible only if \(j_{a'} \ne j_a\), so \((a, b) \mapsto j_a\) injectively maps \(\crossings{r}{R}{x}{\gamma}\) to \(\{1, 2, \ldots, n\}\), proving \(\ncrossings{r}{R}{x}{\gamma} \le n\).
		
		For the converse, assume towards contradiction that for some \(n < \ncrossings{r}{R}{x}{\gamma}\) there exist time instances \(0 = s_0 < s_1 < \ldots <  s_{n+1} = 1\) as in the statement. Since \(\gamma[s_n, 1]\) does not cross \(\annulus{r}{R}{x}\), by~\Cref{thm:crossing-properties}\ref{item:crossing-containment} every \(a \in [s_n, 1]\) satisfies \((a, b) \notin \crossings{r}{R}{x}{\gamma}\) for every \(b > a\). Thus by pigeonhole principle there exists two distinct \((a, b), (a', b') \in \crossings{r}{R}{x}{\gamma}\) such that \(s_{j-1} \le a \le a' \le s_j\) for some \(j \in \{1, 2, \ldots, n\}\). Since by~\Cref{thm:crossing-properties}\ref{item:crossings-disjoint} \((a, b)\) and \((a', b')\) are disjoint, we have \([a, b] \subset [a, a'] \subset [s_{j-1}, s_j]\), so by~\Cref{thm:crossing-properties}\ref{item:crossing-containment} \(\gamma[s_{j-1}, s_j]\) crosses \(\annulus{r}{R}{x}\), which is a contradiction.
	\end{proof}
	\begin{lem}\label{thm:closed-crossing-set}
		For each \(x \in \cX, 0 < r < R < \infty\), and \(N \in \bN\), the set \(\{\Gamma \in \Pathset \cond \ncrossings{r}{R}{x}{\Gamma} \ge N\}\) is a closed subset of \(\Pathset\).
	\end{lem}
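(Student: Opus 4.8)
The plan is to show that the complement $U := \{\Gamma \in \Pathset \cond \ncrossings{r}{R}{x}{\Gamma} \le N-1\}$ is open, by producing around each $\Gamma \in U$ a $\dPaths$-ball contained in $U$. Fix $\Gamma \in U$ and set $\delta_0 := (R-r)/2$. Since every curve of diameter $< R-r$ makes no $\annulus{r}{R}{x}$-crossing by \Cref{thm:crossing-properties}\ref{item:crossing-small-diameter-empty}, only the curves of the \emph{finite} multiset $\Gamma(\delta_0)$ contribute to $\ncrossings{r}{R}{x}{\Gamma}$, so with $m_\gamma := \ncrossings{r}{R}{x}{\gamma}$ we have $\sum_{\gamma \in \Gamma(\delta_0)} m_\gamma = \ncrossings{r}{R}{x}{\Gamma} \le N-1$, and in particular each $m_\gamma$ is finite.

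The geometric input I would use is the equivalence: a compact curve $\eta$ makes no $\annulus{r}{R}{x}$-crossing if and only if $\eta \subset \cX \setminus \overline{B_x(r)}$ or $\eta \subset B_x(R)$. Indeed, if $\eta$ meets both $\overline{B_x(r)}$ and $\cX \setminus B_x(R)$ then \Cref{thm:crossing-properties}\ref{item:crossing-containment} yields a crossing; conversely, any sub-path with both endpoints in $\cX\setminus\overline{B_x(r)}$ (or both in $B_x(R)$) has an endpoint at distance $\ne r$ (resp.\ $\ne R$) from $x$, hence is not a crossing. Fixing a parametrization on $[0,1]$ of each $\gamma \in \Gamma(\delta_0)$ and applying \Cref{thm:separating-sequence} with $n = m_\gamma$, I obtain separating times $0 = s^\gamma_0 < \cdots < s^\gamma_{m_\gamma+1} = 1$ whose pieces are non-crossing; by the equivalence and compactness of each piece, that piece lies either in $\cX\setminus\overline{B_x(r)}$ with $\min_t d(x,\gamma(t)) > r$, or in $B_x(R)$ with $\max_t d(x,\gamma(t)) < R$. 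I then let $\delta' > 0$ be the minimum, over the finitely many such pieces, of $\min_t d(x,\gamma(t)) - r$ in the first case and $R - \max_t d(x,\gamma(t))$ in the second, and fix $\varepsilon \in \big(0, \min\{\delta', (R-r)/4\}\big)$.

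It then suffices to show that $\dPaths(\Gamma, \tilde\Gamma) < \varepsilon$ forces $\ncrossings{r}{R}{x}{\tilde\Gamma} \le N-1$. Fix a partial matching $\pi \subset \Gamma \times \tilde\Gamma$ with $\sup_{(\gamma,\tilde\gamma)\in\pi}\dpaths(\gamma,\tilde\gamma) < \varepsilon$ and every unmatched curve of diameter $< \varepsilon$, and bound $\ncrossings{r}{R}{x}{\tilde\Gamma} = \sum_{\tilde\gamma \in \tilde\Gamma}\ncrossings{r}{R}{x}{\tilde\gamma}$ termwise. Unmatched $\tilde\gamma$ have diameter $< \varepsilon < R-r$, and any $\tilde\gamma$ matched to some $\gamma \notin \Gamma(\delta_0)$ has $\diam(\tilde\gamma) \le \diam(\gamma) + 2\varepsilon \le \delta_0 + 2\varepsilon < R-r$ (since $\dpaths$ distorts diameters by at most twice its value); all of these contribute $0$ by \Cref{thm:crossing-properties}\ref{item:crossing-small-diameter-empty}. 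For $\tilde\gamma$ matched to $\gamma \in \Gamma(\delta_0)$, choose an increasing homeomorphism $\sigma$ from the domain $[0,1]$ of $\gamma$ onto that of $\tilde\gamma$ with $\sup_t d(\gamma(t), \tilde\gamma(\sigma(t))) < \delta'$, possible because $\dpaths(\gamma,\tilde\gamma) < \varepsilon < \delta'$. The slack $\delta'$ then guarantees $d(x,\tilde\gamma(\sigma(s))) > r$ for $s$ in every first-type piece and $d(x,\tilde\gamma(\sigma(s))) < R$ for $s$ in every second-type piece, so the times $\sigma(s^\gamma_0) < \cdots < \sigma(s^\gamma_{m_\gamma+1})$ split the reparametrized curve $\tilde\gamma\circ\sigma$ into non-crossing pieces, and \Cref{thm:separating-sequence} together with parametrization-invariance of the crossing count gives $\ncrossings{r}{R}{x}{\tilde\gamma} \le m_\gamma$. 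Summing over the matched $\tilde\gamma$, whose partners lying in $\Gamma(\delta_0)$ are pairwise distinct because the $\Gamma$-projection of $\pi$ is injective, yields $\ncrossings{r}{R}{x}{\tilde\Gamma} \le \sum_{\gamma \in \Gamma(\delta_0)} m_\gamma \le N-1$, as wanted.

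I expect the delicate step to be precisely the transport of the separating times of each $\gamma \in \Gamma(\delta_0)$ through a near-optimal reparametrization while keeping every piece inside $\cX\setminus\overline{B_x(r)}$ or $B_x(R)$; this is what the uniform slack $\delta'$ is designed for, and it is available exactly because $\Gamma(\delta_0)$ is finite (by definition of $\Pathset$) and each non-crossing piece is compact, so the finitely many gaps have a positive minimum. The remaining ingredients — the small-diameter bound \Cref{thm:crossing-properties}\ref{item:crossing-small-diameter-empty}, the diameter estimate $\diam(\tilde\gamma) \le \diam(\gamma) + 2\dpaths(\gamma,\tilde\gamma)$, and injectivity of the partial matching — are routine bookkeeping.
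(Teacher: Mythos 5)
Your proof is correct and follows essentially the same route as the paper's: show the complement is open by extracting, for each curve of the finite multiset \(\Gamma(\delta_0)\), separating times via \Cref{thm:separating-sequence}, using compactness of the non-crossing pieces to get a uniform positive slack, transporting the pieces to any \(\dPaths\)-close collection through a near-optimal matching and reparametrization, and disposing of unmatched or small curves with \Cref{thm:crossing-properties}\ref{item:crossing-small-diameter-empty}. The only (harmless) difference is that you measure the slack via containment of each piece in \(\cX\setminus\overline{B_x(r)}\) or \(B_x(R)\), whereas the paper measures distance to the spheres \(\partial B_x(\rho_j)\); both yield the same stability estimate.
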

	\begin{proof}
		Let us first show that for any \(\gamma \in \pathset\) with \(\ncrossings{r}{R}{x}{\gamma} < \infty\) we can find \(\delta > 0\) such that \(\ncrossings{r}{R}{x}{\gamma'} \le \ncrossings{r}{R}{x}{\gamma}\) whenever \(\dpaths(\gamma, \gamma') < \delta\). Let \(0 = s_0 < s_1 < \dots < s_{\ncrossings{r}{R}{x}{\gamma}+1} = 1\) be a sequence from \Cref{thm:separating-sequence} for \(\gamma\). Since \(\gamma[s_j, s_{j+1}]\) does not cross the annulus \(\annulus rRx\), we can find \(\rho_j \in \{r, R\}\) such that \(\gamma[s_j, s_{j+1}] \cap \partial B_x(\rho_j) = \emptyset\). Since \(\gamma[s_j, s_{j+1}]\) is compact, we can find \(\delta_j > 0\) such that \(\dist(\gamma(t), \partial B_x(\rho_j)) \ge \delta_j\) for every \(t \in [s_j, s_{j+1}]\). If \(\delta := \min_j \delta_j > 0\), then for any \(\gamma' \in \pathset\) satisfying \(\dpaths(\gamma, \gamma') < \delta\) reverse triangle inequality implies \(\gamma'[s_j, s_{j+1}] \cap \partial B_x(\rho_j) = \emptyset\). We conclude that \(\gamma'[s_j, s_{j+1}]\) does not cross \(\annulus rRx\) for any \(j \in \{0, 1, \ldots, \ncrossings{r}{R}{x}{\gamma}\}\), hence by \Cref{thm:separating-sequence} we have \(\ncrossings{r}{R}{x}{\gamma'} \le \ncrossings{r}{R}{x}{\gamma}\).
		
		Next, suppose \(\ncrossings{r}{R}{x}{\Gamma} < N\) for some \(N \in \bN\), and let \(\delta_0 := \frac{R-r}{3}\). By the above, for each \(\gamma \in \Gamma(\delta_0)\) we can find \(\delta_\gamma \in (0, \delta_0)\) such that \(\ncrossings{r}{R}{x}{\gamma'} \le \ncrossings{r}{R}{x}{\gamma}\) whenever \(\dpaths(\gamma, \gamma') < \delta_\gamma\). Let \(\delta := \min_{\gamma \in \Gamma(\delta_0)} \delta_\gamma\), and take \(\Gamma' \in \Pathset\) satisfying \(d(\Gamma, \Gamma') < \delta\). Since \(\delta < R-r\), every \(\gamma' \in \Gamma'(R-r)\) can be matched with some \(\gamma \in \Gamma\) so that \(\dpaths(\gamma, \gamma') < \delta\). In particular, since \(\delta < \delta_0\), we get
		\begin{align*}
			\diam(\gamma) \ge \diam(\gamma') - 2\dpaths(\gamma, \gamma') \ge (R-r) - 2\delta_0 = \delta_0,
		\end{align*}
		so \(\gamma \in \Gamma(\delta_0)\). Therefore, since \(\dpaths(\gamma, \gamma') < \delta_\gamma\), we have \(\ncrossings{r}{R}{x}{\gamma'} \le \ncrossings{r}{R}{x}{\gamma}\). On the other hand, by \Cref{thm:crossing-properties}\ref{item:crossing-small-diameter-empty} each \(\gamma' \in \Gamma'\setminus\Gamma'(R-r)\) satisfies \(\ncrossings{r}{R}{x}{\gamma'} = 0\). We thus get
		\begin{align*}
			\ncrossings{r}{R}{x}{\Gamma'} = \sum_{\gamma' \in \Gamma'(R-r)} \ncrossings{r}{R}{x}{\gamma'} \le \sum_{\gamma \in \Gamma(\delta_0)} \ncrossings{r}{R}{x}{\gamma} \le \ncrossings{r}{R}{x}{\Gamma} < N.
		\end{align*}
		Since the above holds for any \(\Gamma'\) satisfying \(d(\Gamma, \Gamma') < \delta\), the set \(\{\Gamma \in \Pathset \cond \ncrossings{r}{R}{x}{\Gamma} < N\}\) is open, so its complement \(\{\Gamma \in \Pathset \cond \ncrossings{r}{R}{x}{\Gamma} \ge N\}\) is closed.
	\end{proof}
	
	\subsection{Proof of~\Cref{thm:precompact-iff-regular}} \label{sec:proof-of-regular-iff-precompact}
	Recall from the proof sketch of~\Cref{thm:precompact-iff-regular} presented in the introduction that we want to approximate paths by piecewise geodesics. We define a concatenation of a finite sequence \((\gamma_j)_{j=1}^k\) of curves \(\gamma_j : [j-1, j] \to \cX\) satisfying \(\gamma_j(j) = \gamma_{j+1}(j)\) for every \(j \in \{1, \ldots, k-1\}\) by
	\begin{align*}
		\bigg(\prod_{j=1}^k \gamma_j\bigg)(t) := \gamma_{\lceil t \rceil}(t), \qquad t \in [0, k].
	\end{align*}
	Reparametrization of each \(\gamma_j\) by an increasing homeomorphism \(\sigma_j : [j-1, j] \to [j-1, j]\) corresponds to the reparametrization \(\sigma : [0, k] \to [0, k]\) of \(\prod_{j=1}^k \gamma_j\) defined as \(\sigma(t) = \sigma_{\lceil t \rceil}(t)\), thus \(\prod_{j=1}^k \gamma_j\) is well defined as an element in the space of unparametrized curves \(\pathset\).
	
	Let us begin with proving that regularity of random paths implies precompactness, generalizing the result in~\cite[Theorem 1.2]{aizenman1999holder}.
	\begin{prop}\label{thm:precompact-iff-regular-curves}
		Let \((\cX, d)\) be a compact geodesic metric space. Then a regular subset \(M \subset \cM_1(\pathset)\) is precompact.
	\end{prop}
	\begin{proof}
		We aim to use~\Cref{thm:seq-tightness-complete} to prove that a regular set \(M \subset \cM_1(\pathset)\) of random curve collections is a precompact. Fix \(\varepsilon, \delta > 0\), and let \(u > 0\) be a small number determined later. Let \(F\) be a finite \(u\)-dense subset of \(\cX\) (in the sense of~\eqref{eqn:delta-dense-definition}). By regularity of \(M\), there exists \(N = N(u, \varepsilon) \in \bN\) such that
		\begin{align*}
			\max_{x \in F}\mu[\ncrossings{u}{2u}{x}{\gamma} > N] \;<\; \frac{\varepsilon}{|F|}, \qquad \forall \mu \in M,
		\end{align*}
		which by union bound implies
		\begin{align}\label{eqn:sample-crossing-bound-path}
			\mu\Big[\sum_{x \in F}\ncrossings{u}{2u}{x}{\gamma} \le N|F|\Big] \;\ge\; 1 - \sum_{x \in F}\mu\big[\ncrossings{u}{2u}{x}{\gamma} > N\big] \;\ge\; 1-\varepsilon, \quad \forall \mu \in M.
		\end{align}
		Since \(F \subset \cX\) is \(u\)-dense, we may decompose \(\gamma \in \pathset\) satisfying \(\sum_{x \in F} \ncrossings{u}{2u}{x}{\gamma} = n\) into \(\ell+1 \le n+1\) pieces, \(\gamma = \prod_{j=1}^{\ell+1} \gamma_j\), such that \(\gamma_j \subset \overline{B_{x_j}(2u)}\) crosses \(\annulus{u}{2u}{x_j}\) for some \(x_j \in F\) for every \(1 \le j \le \ell\), and \(\gamma_{\ell+1} \subset B_{x_{\ell+1}}(2u)\) for some \(x_{\ell+1} \in F\). Fixing a geodesic \(\eta_{x, y}\) between any two points \(x, y \in F\), we may approximate \(\gamma\) by a piecewise geodesic curve in the finite set\footnote{Note that \(P_\ell \subset P_n\) for every \(\ell \le n\) as concatenating constant paths \(\eta_{x, x} \equiv x\) is an identity operation.}
		\begin{align}\label{eqn:piecewise-geodesic-def}
			P_n := \bigg\{\prod_{j=1}^{n} \eta_{z_j, z_{j+1}} \cond z_j \in F \;\forall 1 \le j \le \ell \bigg\},
		\end{align}
		specifically by the path \(\tilde \gamma := \prod_{j=1}^\ell \eta_{x_j, x_{j+1}} \in P_\ell\). Indeed, since the set \(\overline{B_{x_j}(2u)} \cup \gamma_j \cup \gamma_{j+1} \cup \overline{B_{x_{j+1}}(2u)} = \overline{B_{x_j}(2u) \cup B_{x_{j+1}}(2u)}\) is connected, and \(\eta_{x_j, x_{j+1}}\) is a geodesic from \(x_j\) to \(x_{j+1}\), we must have \(\eta_{x_j, x_{j+1}} \subset \overline{B_{x_j}(2u) \cup B_{x_{j+1}}(2u)}\). For \(0 \le j \le \ell\), we may thus bound
		\begin{align*}
			\dpaths(\gamma_j, \eta_{x_j, x_{j+1}}) \le\;& \diam(\overline{B_{x_j}(2u)} \cup \overline{B_{x_{j+1}}(2u)}) \le 8u;
		\end{align*}
		the second inequality holds by subadditivity of diameter for connected sets. Since \(\tilde \gamma\) ends at \(x_{\ell+1}\), and \(\gamma_{\ell+1} \subset B_{x_{\ell+1}}(2u)\), we may estimate
		\begin{align}\label{eqn:geodesic-approximation-distance}
			\dpaths(\gamma, \tilde \gamma) \le \max\Big(\max_{1 \le j \le \ell} \dpaths(\gamma_j, \eta_{x_j, x_{j+1}}), \dpaths(\gamma_{\ell+1}, \eta_{x_{\ell+1}, x_{\ell+1}})\Big) \le \max(8u, 4u) = 8u.
		\end{align}
		This shows that every \(\gamma \in \pathset\) satisfying \(\sum_{x \in F}\ncrossings{u}{2u}{x}{\gamma} \le n\) also satisfies \(\dist(\gamma, P_n) \le 8u\), or equivalently \(\gamma \in \overline{B_{P_{n}}(8u)}\). With the choices \(u < \delta/8\) and \(n = N|F|\),~\eqref{eqn:sample-crossing-bound-path} yields
		\begin{align*}
			\mu\Big(B_{P_{N|F|}}(\delta)\Big) \ge \mu\Big(\overline{B_{P_{N|F|}}(8u)}\Big) \ge \mu\bigg[\sum_{x \in F}\ncrossings{u}{2u}{x}{\gamma} \le N|F|\bigg] \ge 1-\varepsilon, \qquad \forall \mu \in M.
		\end{align*}
		By \Cref{thm:seq-tightness-complete} we conclude that every sequence \((\mu_n)_{n \in \bN}\) has a subsequence converging in \(\cM_1(\pathset)\), so \(M \subset \cM_1(\pathset)\) is precompact.
	\end{proof}
	Extending the above result to path collections \(\Gamma\) gives~\Cref{thm:precompact-iff-regular}:	
	\precompactiffregular*
	\begin{proof}
		\textbf{Regular implies precompact.} Suppose \(M \subset \cM_1(\Pathset)\) is regular, and fix \(\varepsilon, \delta > 0\), and \(u < \delta/8\). As in the proof of~\Cref{thm:precompact-iff-regular-curves}, we can find \(N \in \bN\) such that the analogue of \eqref{eqn:sample-crossing-bound-path} holds:
		\begin{align}\label{eqn:sample-crossing-bound}
			\mu\Big[\sum_{x \in F}\ncrossings{u}{2u}{x}{\Gamma} \le N|F|\Big] \;\ge\; 1-\varepsilon, \quad \forall \mu \in M.
		\end{align}
		Note that the condition \(\sum_{x \in F} \ncrossings{u}{2u}{x}{\Gamma} \le n\) implies that no more than \(n\) paths may cross an annulus \(\annulus{u}{2u}{x}\) for some \(x \in F\). Together with~\Cref{thm:crossing-properties}\ref{item:crossing-large-diameter} we conclude \(\#\Gamma(4u) \le n\). Such \(\Gamma\) can thus be well approximated by a path collection in the finite set
		\begin{align*}
			\mathbf{P}_n := \{\hat\Gamma \in \Pathset \cond \# \hat\Gamma \le n, \gamma \in P_n \forall \gamma \in \hat\Gamma\},
		\end{align*}
		where \(P_n\) is defined by~\eqref{eqn:piecewise-geodesic-def}. Indeed, consider the path collection
		\begin{align*}
			\tilde \Gamma := \{\tilde \gamma \cond \gamma \in \Gamma(4u)\} \in \mathbf{P}_n,
		\end{align*}
		where \(\tilde \gamma \in P_n\) is as in the proof of~\Cref{thm:precompact-iff-regular-curves} and thus satisfies \(\dpaths(\gamma, \tilde \gamma) \le 8u\) from~\eqref{eqn:geodesic-approximation-distance}. The map \(\gamma \mapsto \tilde \gamma\) induces a perfect matching between \(\Gamma(4u)\) and \(\tilde\Gamma\), hence we get
		\begin{align*}
			\dPaths(\Gamma, \tilde \Gamma) \le \max\Big(\max_{\gamma \in \Gamma(4u)} \dpaths(\gamma, \tilde \gamma), \sup_{\gamma \in \Gamma \setminus \Gamma(4u)} \diam(\gamma)\Big) \le \max(8u, 4u) < \delta.
		\end{align*}
		Since \(\tilde \Gamma \in \mathbf{P}_n\), we conclude that \(\sum_{x \in F} \ncrossings{u}{2u}{x}{\Gamma} \le n\) implies \(\dist(\Gamma, \mathbf{P}_n) < \delta\). From~\eqref{eqn:sample-crossing-bound} we thus get
		\begin{align*}
			\mu\Big(B_{\mathbf{P}_{N|F|}}(\delta)\Big) \ge \mu\bigg[\sum_{x \in F}\ncrossings{u}{2u}{x}{\Gamma} \le N|F|\bigg] \ge 1-\varepsilon, \qquad \forall \mu \in M.
		\end{align*}
		By \Cref{thm:seq-tightness-complete} we conclude that every sequence \((\mu_n)_{n \in \bN}\) has a subsequence converging in \(\cM_1(\Pathset)\), so \(M \subset \cM_1(\Pathset)\) is precompact.
		
		\textbf{Precompact implies regular.} To prove the converse, assume \(M \subset \cM_1(\Pathset)\) is not regular. Then there exists \(p > 0\) and a sequence \((\mu_n)_{n \in \bN}\) in \(M\) such that
		\begin{align}\label{eqn:infinite-crossing-subsequence}
			\mu_n[N^\Gamma_{r,R}(x) \ge n] \ge p \qquad \forall n \in \bN.
		\end{align}
		Towards a contradiction, assume there exists a subsequence \((n_k)_{k \in \mathbb N}\) such that \((\mu_{n_k})_{k \in \bN}\) converges weakly to some \(\mu \in \cM_1(\Pathset)\). By \Cref{thm:closed-crossing-set}, \(\{\Gamma \in \Pathset \cond \ncrossings{r}{R}{x}{\Gamma} \ge N\}\) is a closed set. Hence, Portmanteau theorem (\Cref{thm:Portmanteau}) yields
		\begin{align}\label{eqn:crossing-probability-limit}
			\limsup_{k \to \infty}\mu_{n_k}[\ncrossings{r}{R}{x}{\Gamma} \ge N] \le \mu[\ncrossings{r}{R}{x}{\Gamma} \ge N] \xrightarrow{N \to \infty} 0,
		\end{align}
		where the last limit holds by monotone convergence and the deterministic fact \(\ncrossings{r}{R}{x}{\Gamma} < \infty\) from \Cref{thm:finite-crossings}. On the other hand, by \eqref{eqn:infinite-crossing-subsequence}, for any \(k \in \mathbb N\) such that \(n_k \ge N\) we have
		\begin{align*}
			\mu_{n_k}[\ncrossings rRx{\Gamma} \ge N] \ge \mu_{n_k}[\ncrossings rRx{\Gamma} \ge n_k] \ge p, 
		\end{align*}
		which contradicts the limit~\eqref{eqn:crossing-probability-limit}. By contradiction, we conclude that the subsequence \((\mu_{n_k})_{k \in \bN}\) does not contain any converging subsequences, so \(M\) is not a precompact subset of \(\cM_1(\Pathset)\).
	\end{proof}

	\subsection{Refinement of regularity condition} \label{sec:refinement-of-regularity-condition}
	In this subsection we prove~\Cref{thm:regular-curves-euclidean}. As a blueprint, we generalize the heuristic argument presented after the statement of~\Cref{thm:precompact-iff-regular} in the introduction.
	In the proof of the upcoming~\Cref{thm:regularity-on-bdd+closed=compact} below, the set denoted by \(S_0\) plays the role of \(\partial B_x(\frac{R+r}{2})\) in the heuristic argument.
	\begin{prop}\label{thm:regularity-on-bdd+closed=compact}
		Let \((\cX,d)\) be a geodesic metric space for which bounded and closed sets are compact. Then \(M \subset \cM_1(\Pathset)\) is regular if and only if for every \(x \in \cX\) and \(R>0\) there exists \(r \in (0,R)\) such that \eqref{eqn:path-precompactness} is satisfied:
		\begin{align*}
			\lim_{N \to \infty}\sup_{\mu \in M}\mu[\ncrossings{r}{R}{x}{\Gamma} \ge N] = 0.
		\end{align*}
	\end{prop}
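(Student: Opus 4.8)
The forward direction is immediate: if $M$ is regular then \eqref{eqn:path-precompactness} holds for \emph{every} $R>r>0$ at every $x$, in particular for one $r\in(0,R)$ per pair $(x,R)$. The content is the converse, and I would prove it by a covering-and-pigeonhole argument that localizes a crossing of an arbitrary annulus near one of finitely many points, exploiting that closed balls are compact. Fix $x\in\cX$ and $0<r_0<R_0<\infty$; the goal is \eqref{eqn:path-precompactness} at $(x,r_0,R_0)$. If no curve can cross $\annulus{r_0}{R_0}{x}$ the claim is trivial, so assume otherwise, and observe (intermediate value theorem applied to $d(x,\gamma(\cdot))$) that every such crossing passes through the sphere $\partial B_x(\rho)$ with $\rho:=\tfrac{r_0+R_0}{2}$. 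This sphere is a closed subset of the compact ball $\overline{B_x(\rho)}$, hence compact.

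\textbf{Setting up the cover.} Put $s:=\tfrac{R_0-r_0}{3}$ (any positive number below $\tfrac{R_0-r_0}{2}$ works). For each $y\in\partial B_x(\rho)$ apply the hypothesis at $(y,s)$ to get $r(y):=r(y,s)\in(0,s)$ with $\lim_{N\to\infty}\sup_{\mu\in M}\mu[\ncrossings{r(y)}{s}{y}{\Gamma}\ge N]=0$. The open balls $\{B_y(r(y)/2)\}_{y\in\partial B_x(\rho)}$ cover the compact set $\partial B_x(\rho)$, so finitely many of them, centred at $y_1,\dots,y_m$, already do. The key geometric claim is then: \emph{every} crossing $\gamma[a,b]$ of $\annulus{r_0}{R_0}{x}$ contains a crossing of $\annulus{r(y_i)}{s}{y_i}$ for some $i\le m$. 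Indeed, choose $t^*\in(a,b)$ with $d(x,\gamma(t^*))=\rho$; then $\gamma(t^*)\in B_{y_i}(r(y_i)/2)\subset\overline{B_{y_i}(r(y_i))}$ for some $i$. On the other hand $d(x,y_i)=\rho$ while $d(x,\gamma(a)),d(x,\gamma(b))$ are $r_0$ and $R_0$, so by the reverse triangle inequality $d(y_i,\gamma(a)),d(y_i,\gamma(b))\ge\tfrac{R_0-r_0}{2}>s$, i.e.\ $\gamma(a)\notin B_{y_i}(s)$. Applying \Cref{thm:crossing-properties}\ref{item:crossing-containment} to $\gamma$ on $[a,t^*]$ (centre $y_i$, radii $r(y_i)<s$) produces an $\annulus{r(y_i)}{s}{y_i}$-crossing with parameters inside $[a,t^*]\subseteq[a,b]$.

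\textbf{Counting and conclusion.} Distinct crossings of $\annulus{r_0}{R_0}{x}$ are disjoint intervals (\Cref{thm:crossing-properties}\ref{item:crossings-disjoint}), so the sub-crossings obtained above from a fixed curve are distinct, and summing over curves, if $\ncrossings{r_0}{R_0}{x}{\Gamma}\ge N$ then pigeonholing the labels $i\le m$ yields some $i$ with $\ncrossings{r(y_i)}{s}{y_i}{\Gamma}\ge N/m$. Hence $\{\ncrossings{r_0}{R_0}{x}{\Gamma}\ge N\}\subseteq\bigcup_{i=1}^m\{\ncrossings{r(y_i)}{s}{y_i}{\Gamma}\ge N/m\}$, and a union bound gives
\[
\sup_{\mu\in M}\mu[\ncrossings{r_0}{R_0}{x}{\Gamma}\ge N]\;\le\;\sum_{i=1}^m\sup_{\mu\in M}\mu\big[\ncrossings{r(y_i)}{s}{y_i}{\Gamma}\ge N/m\big]\;\xrightarrow{\,N\to\infty\,}\;0
\]
by the defining property of each $r(y_i)$, which is exactly \eqref{eqn:path-precompactness} at $(x,r_0,R_0)$; since $(x,r_0,R_0)$ was arbitrary, $M$ is regular.

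\textbf{Main obstacle.} The delicate point is the geometric claim: a single crossing of the large, arbitrary annulus must be \emph{forced} to cross one of the finitely many small annuli $\annulus{r(y_i)}{s}{y_i}$. This requires the outer radius $s$ to be small enough that the large crossing's endpoints escape $B_{y_i}(s)$, and simultaneously the cover to be finer than the inner radii $r(y_i)$ so that the large crossing enters $B_{y_i}(r(y_i))$. These demands are compatible because $r(y_i)<s<\tfrac{R_0-r_0}{2}$ and — crucially — the cover may be refined \emph{after} the $r(y_i)$ are produced, since each $r(y_i)$ depends only on $y_i$ and on $s$, and $s$ depends only on $r_0,R_0$. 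Everything else (measurability of the relevant sets, the reduction to finitely many points via compactness of $\partial B_x(\rho)$, the pigeonhole bookkeeping, and the reduction of the trivial cases) is routine; note that only compactness of closed balls is used, not the geodesic hypothesis. This is the ``blueprint'' promised for \Cref{thm:regular-curves-euclidean}, the difference there being that one must track how $m$ grows with the fineness of the cover to extract a quantitative rate.
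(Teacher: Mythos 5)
Your proposal is correct, but it takes a genuinely different route from the paper's. You prove the nontrivial implication directly: the middle sphere \(\{y \in \cX \cond d(x,y) = \tfrac{r_0+R_0}{2}\}\) is compact by the Heine--Borel hypothesis, you cover it by finitely many balls \(B_{y_i}(r(y_i)/2)\) whose inner radii \(r(y_i)\in(0,s)\), \(s=\tfrac{R_0-r_0}{3}\), are supplied by the hypothesis at \((y_i,s)\), and then \Cref{thm:crossing-properties}\ref{item:crossing-containment} forces every crossing of \(\annulus{r_0}{R_0}{x}\) to contain a crossing of some \(\annulus{r(y_i)}{s}{y_i}\); disjointness of the big crossing intervals (\Cref{thm:crossing-properties}\ref{item:crossings-disjoint}) makes the assignment injective (a nondegenerate sub-interval cannot lie in two closed intervals with disjoint interiors), giving \(\ncrossings{r_0}{R_0}{x}{\Gamma} \le \sum_{i=1}^m \ncrossings{r(y_i)}{s}{y_i}{\Gamma}\), after which the pigeonhole and union bound close the argument — this is sound. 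The paper instead proves the contrapositive: starting from one annulus where \eqref{eqn:path-precompactness} fails with limiting probability \(p_0>0\), it iteratively subdivides a compact separating set, pigeonholing the crossing probability onto a nested sequence of compact pieces \(S_k\) with \(\diam(S_k)\to 0\) and \(p_{k+1}=p_k/|\cF_k|\), and extracts a single point \(y\) and radius \(\rho>0\) at which the condition fails for \emph{every} inner radius \(\varepsilon\in(0,\rho)\). Your one-step covering argument is shorter and, as you note, needs only compactness of closed bounded sets; what the paper's iterative construction buys is exactly the quantitative bookkeeping of how \(p_k\) decays with the fineness of the subdivision, which is what gets recycled (with balls, then cubes) to prove \Cref{thm:ball-version} and \Cref{thm:regular-curves-euclidean} — your direct argument, while sufficient for this proposition, does not by itself yield those rate refinements, so your closing remark that one ``only'' needs to track the growth of \(m\) understates the change of structure required there.
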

	\begin{proof}
		Suppose \(M \subset \cM_1(\Pathset)\) is not regular; the other implication is trivial. Then there exists a point \(x \in \cX\) and radii \(R>r>0\) such that
		\begin{align*}
			\lim_{N \to \infty}\sup_{\mu \in M}\mu[\ncrossings{r}{R}{x}{\Gamma} \ge N] = p_0 > 0.
		\end{align*}
		For \(\gamma \in \pathset, \Gamma \in \Pathset\), and a subset \(S \subset \cX\), let
		\begin{gather*}
			\scrossings{r}{R}{x}{\gamma}{S} := \{(a, b) \in \crossings{r}{R}{x}{\gamma} : \gamma[a, b] \cap S \ne \emptyset\},\\
			\nscrossings{r}{R}{x}{\gamma}{S} := |\scrossings{r}{R}{x}{\gamma}{S}|, \qquad \nscrossings{r}{R}{x}{\Gamma}{S} := \sum_{\gamma \in \Gamma} \nscrossings{r}{R}{x}{\gamma}{S}.
		\end{gather*}
		Let \(S_0 \subset \annulus{r}{R}{x}\) be a closed subset separating \(\partial B_x(r)\) from \(\partial B_x(R)\). Since \(S_0\) is also bounded, it is compact. Since each crossing \(\gamma[a, b], (a, b) \in \crossings{r}{R}{x}{\gamma}\), connects \(\partial B_x(r)\) to \(\partial B_x(R)\), we have \(\scrossings{r}{R}{x}{\gamma}{S_0} = \crossings{r}{R}{x}{\gamma}\), and consequently
		\begin{align*}
			\lim_{N \to \infty}\sup_{\mu \in M}\mu[\nscrossings{r}{R}{x}{\Gamma}{S_0} \ge N] = p_0.
		\end{align*}
		Given \(S_k\) we will construct a compact set \(S_{k+1} \subset S_k\) as follows. Fix a number \(\varepsilon_{k+1} > 0\). Since \(S_k\) is compact, we can find a finite cover \(\mathcal F_k\) of \(S_k\) such that \(\diam(S) \le \varepsilon_{k+1}\) for every \(S \in \mathcal F_k\); by replacing each \(S \in \mathcal F_k\) with \(\overline{S \cap S_k}\) we may without loss of generality assume each set \(S \in \cF_k\) to be closed and satisfy \(S \subset S_k\), so in particular \(S\) is compact. In such case, an \(\annulus{r}{R}{x}\)-crossing hits the set \(S_k\) if and only if it hits at least one of the sets in \(\cF_k\). Hence, for any \(\gamma \in \pathset\) we get \(\scrossings{r}{R}{x}{\gamma}{S_k} = \bigcup_{S \in \mathcal F_k} \scrossings{r}{R}{x}{\gamma}{S}\), and consequently
		\begin{align*}
			\nscrossings{r}{R}{x}{\Gamma}{S_k} \le \sum_{S \in \mathcal F_k}\nscrossings{r}{R}{x}{\Gamma}{S}.
		\end{align*}
		At least one of \(S \in \mathcal F_k\) thus has to satisfy \(\nscrossings{r}{R}{x}{\Gamma}{S} \ge \frac{\nscrossings{r}{R}{x}{\Gamma}{S_k}}{|\mathcal F_k|}\). As this holds for any \(\gamma \in \pathset\), we get the inclusion of events
		\begin{align*}
			\Big\{\Gamma \in \Pathset : \nscrossings{r}{R}{x}{\Gamma}{S_k} \ge N\Big\} \subset \bigcup_{S \in \mathcal F_k} \bigg\{\Gamma \in \Pathset : \nscrossings{r}{R}{x}{\Gamma}{S} \ge \frac{N}{|\mathcal F_k|}\bigg\}.
		\end{align*}
		Union bound and the above inclusion thus yield
		\begin{align*}
			\sum_{S \in \mathcal F_k}\sup_{\mu \in M}\mu\Big[\nscrossings{r}{R}{x}{\Gamma}{S} \ge \frac{N}{|\mathcal F_k|}\Big] \;\ge\; \sup_{\mu \in M} \mu[\nscrossings{r}{R}{x}{\Gamma}{S_k} \ge N].
		\end{align*}
		Taking the limit \(N \to \infty\) of both sides above we conclude that there exists \(S_{k+1} \in \mathcal F_k\) satisfying
		\begin{align}\label{eqn:many-set-crossings}
			\begin{split}
				\lim_{N \to \infty}\sup_{\mu \in M}\mu\big[\nscrossings{r}{R}{x}{\Gamma}{S_{k+1}} \ge N\big] \ge\;& \frac{\lim_{N\to \infty}\sup_{\mu \in M}\mu[\nscrossings{r}{R}{x}{\Gamma}{S_k} \ge N]}{|\mathcal F_k|} \\
				=\;& \frac{p_k}{|\mathcal F_k|} =: p_{k+1}.
			\end{split}
		\end{align}
		We have constructed a decreasing sequence of compact sets \((S_k)_{k \in \bN}\) satisfying \(\diam(S_k) \le \varepsilon_k\). By choosing the diameters \(\varepsilon_k\) so that \(\lim_{k \to \infty} \varepsilon_k = 0\) there exists a unique element \(y \in \bigcap_{k \in \bN} S_k \subset S_0\). Note that since \(S_0 \cap \partial \annulus{r}{R}{x} = \emptyset\), we have \(\rho := \dist(y, \partial\annulus{r}{R}{x}) > 0\).
		
		Take \(\varepsilon \in (0, \rho)\), and let \(k \in \bN\) be large enough so that \(\varepsilon_k < \varepsilon\). For \(\gamma \in \pathset\), take \((a, b) \in \scrossings{r}{R}{x}{\gamma}{S_k}\). Since \(\diam(S_k) \le \varepsilon_k < \varepsilon\), and \(y \in S_k\), we have \(S_k \subset B_y(\varepsilon)\), so we can find \(t \in (a, b)\) such that \(\gamma(t) \in S_k \subset B_y(\varepsilon)\). On the other hand, \(\gamma(a) \in \partial \annulus{r}{R}{x}\), so in particular \(\gamma(a) \notin B_y(\rho)\). By~\Cref{thm:crossing-properties}\ref{item:crossing-containment} we can thus find \(a', b' \in (a, b)\) such that \((a', b') \in \crossings{\varepsilon}{\rho}{y}{\gamma}\). As the intervals in \(\crossings{r}{R}{x}{\gamma}\) are disjoint by~\Cref{thm:crossing-properties}\ref{item:crossings-disjoint}, the map \((a, b) \mapsto (a', b')\) sends elements of \(\scrossings{r}{R}{x}{\gamma}{S_k}\) injectively to \(\crossings{\varepsilon}{\rho}{y}{\gamma}\), so we have \(\ncrossings{\varepsilon}{\rho}{y}{\gamma} \ge \nscrossings{r}{R}{x}{\gamma}{S_k}\). As this holds for any \(\gamma \in \pathset\), we get
		\begin{align}\label{eqn:nonregular-annulus}
			\lim_{N \to \infty}\sup_{\mu \in M}\mu[\ncrossings{\varepsilon}{\rho}{y}{\Gamma} \ge N] \ge \lim_{N \to \infty}\limsup_{\mu \in M}\mu[\nscrossings{r}{R}{x}{\Gamma}{S_k} \ge N] \ge p_k > 0.
		\end{align}
		We conclude that the equation~\eqref{eqn:path-precompactness} with the choices \(x = y, r = \varepsilon\), and \(R = \rho\) fails for every \(\varepsilon \in (0, \rho)\), so \(M\) is not regular at \(y\). The result hence follows by contraposition. 
	\end{proof}
	Let us next highlight how to modify the above proof to get~\Cref{thm:regular-curves-euclidean}. Equation \eqref{eqn:many-set-crossings} provides a way to quantitatively estimate the values of \(p_k\) appearing in~\eqref{eqn:nonregular-annulus}; one needs to construct the covers \(\cF_k\) in a way that \(|\cF_k|\) depends only on \(k\). The proof of \Cref{thm:regular-curves-euclidean} below uses cubes which are inherently euclidean objects. A more natural and generalizable idea would be to use balls instead as follows. Choose \(S_0 = \partial B_x(\frac{R+r}{2})\), and \(\cF_k\) to consist of balls of radius \(\varepsilon_{k+1}\) with centers in \(S_k\). In \(\bR^d\), using for example Vitali covering lemma one can find a constant \(c > 1\) depending only on \(d\) such that \(|\cF_k| \le c(\frac{\varepsilon_k}{\varepsilon_k+1})^{d-1}\), which from the recursive equation \(p_{k+1} = \frac{p_k}{|\cF_k|} = c^{-1}(\frac{\varepsilon_{k+1}}{\varepsilon_k})^{d-1}\) yields
	\begin{align*}
		p_k \ge c^{-k}\Big(\frac{\varepsilon_k}{\varepsilon_0}\Big)^{d-1}.
	\end{align*}
	If \(c > 1\), the term \(c^{-k}\) prevents us to prove \Cref{thm:regular-curves-euclidean};	 however we can get arbitrarily close to it in the following sense. Fix a function \(g : \bR_+ \to \bR_+\) such that \(\lim_{r \to 0}g(r) = 0\). By choosing \(\varepsilon_k\) small enough so that \(g(\varepsilon_k) \le \frac{c^{-k}}{\varepsilon_0^{d-1}}\) we then get \(p_k \ge g(\varepsilon_k)\varepsilon_k^{d-1}\). With the choice \(\varepsilon = \varepsilon_k\),~\eqref{eqn:nonregular-annulus} becomes
	\begin{align*}
		\lim_{N \to \infty} \sup_{\mu \in M} \mu[\ncrossings{\varepsilon_k}{\rho}{y}{\Gamma} \ge N] \ge g(\varepsilon_k)\varepsilon_k^{d-1}.
	\end{align*}
	As the choice of \(g\) was arbitrary up to \(\lim_{r \to 0}g(r) = 0\), the contraposition shows the following.
	\begin{prop}\label{thm:ball-version}
		A subset \(M \subset \cM_1(\Pathset(\bR^d))\) is regular if and only if for every \(x \in \bR^d\) and \(R > 0\) there exists a function \(g : \bR_+ \to \bR_+\) such that \(\lim_{r \to 0} g(r) = 0\), and
		\begin{align*}
			\lim_{N \to \infty}\sup_{\mu \in M}\mu[\ncrossings{r}{R}{x}{\Gamma} \ge N] = o(g(r)r^{d-1}), \qquad \text{as } r \to 0.
		\end{align*}
	\hfill \(\square\)
	\end{prop}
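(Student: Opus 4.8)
The plan is to follow the blueprint sketched just above the statement: rerun the iterative construction from the proof of \Cref{thm:regularity-on-bdd+closed=compact}, but with balls as the covering sets, so that the number of pieces produced at each step is quantitatively controlled. The forward implication needs nothing new: if \(M\) is regular then, by \Cref{def:regularity}, \(\lim_{N\to\infty}\sup_{\mu\in M}\mu[\ncrossings{r}{R}{x}{\Gamma}\ge N]\) vanishes for every \(x\) and all \(R>r>0\), hence it vanishes identically on \((0,R)\) and is \(o(g(r)r^{d-1})\) for, say, \(g(r)=r\).

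For the converse I argue by contraposition. Suppose \(M\) is not regular, and fix \(x\in\bR^d\) and radii \(R>r>0\) with \(p_0:=\lim_{N\to\infty}\sup_{\mu\in M}\mu[\ncrossings{r}{R}{x}{\Gamma}\ge N]>0\); set \(\rho:=\tfrac{R-r}{2}\) and \(\varepsilon_0:=2R\), and fix an arbitrary \(g:\bR_+\to\bR_+\) with \(\lim_{r\to0}g(r)=0\). I start from \(S_0:=\partial B_x\big(\tfrac{R+r}{2}\big)\), which in \(\bR^d\) is a compact \((d-1)\)-sphere separating \(\partial B_x(r)\) from \(\partial B_x(R)\); thus every \(\annulus{r}{R}{x}\)-crossing meets \(S_0\) and \(\lim_{N\to\infty}\sup_{\mu}\mu[\nscrossings{r}{R}{x}{\Gamma}{S_0}\ge N]=p_0\). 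Given a compact \(S_k\subset S_0\) with \(\diam S_k\le\tfrac{\varepsilon_k}{2}\) (for \(k\ge1\)) and \(p_k:=\lim_{N\to\infty}\sup_{\mu}\mu[\nscrossings{r}{R}{x}{\Gamma}{S_k}\ge N]>0\), I cover \(S_k\) by finitely many closed balls of radius \(\le\tfrac{\varepsilon_{k+1}}{4}\) centred on \(S_k\) and intersect each with \(S_k\); since \(S_k\) lies on the \((d-1)\)-sphere \(S_0\) and has diameter \(\le\varepsilon_k\), a Vitali packing argument bounds the resulting cover \(\cF_k\) by \(|\cF_k|\le c\,(\varepsilon_k/\varepsilon_{k+1})^{d-1}\) with \(c=c(d)\ge1\) dimensional, and arranges \(\diam S\le\tfrac{\varepsilon_{k+1}}{2}\) for each \(S\in\cF_k\). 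Exactly as in \eqref{eqn:many-set-crossings}—a crossing meets \(S_k\) iff it meets some member of \(\cF_k\), so pigeonhole, union bound and \(N\to\infty\) apply—some \(S_{k+1}\in\cF_k\) satisfies \(p_{k+1}\ge p_k/|\cF_k|\ge c^{-1}(\varepsilon_{k+1}/\varepsilon_k)^{d-1}p_k\), whence \(p_k\ge c^{-k}(\varepsilon_k/\varepsilon_0)^{d-1}p_0\) for all \(k\).

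Collapsing the nested compacts gives \(\bigcap_k S_k=\{y\}\) with \(y\in S_0\), so \(\dist(y,\partial\annulus{r}{R}{x})=\rho\). When \(\varepsilon_k<\rho\) we have \(S_k\subset B_y(\varepsilon_k)\), while any crossing counted by \(\nscrossings{r}{R}{x}{\gamma}{S_k}\) starts on \(\partial\annulus{r}{R}{x}\), hence outside \(B_y(\rho)\); by \Cref{thm:crossing-properties}\ref{item:crossing-containment} it contains an \(\annulus{\varepsilon_k}{\rho}{y}\)-crossing, and by \Cref{thm:crossing-properties}\ref{item:crossings-disjoint} these are distinct for distinct parent crossings, so \(\ncrossings{\varepsilon_k}{\rho}{y}{\gamma}\ge\nscrossings{r}{R}{x}{\gamma}{S_k}\) for every \(\gamma\); summing over \(\Gamma\), then taking \(\sup_\mu\) and \(N\to\infty\), gives \(\lim_{N\to\infty}\sup_{\mu\in M}\mu[\ncrossings{\varepsilon_k}{\rho}{y}{\Gamma}\ge N]\ge p_k\). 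Now I fix the scales: choose \(\varepsilon_1>\varepsilon_2>\cdots\to0\) inductively, with \(\varepsilon_1<\rho\) and small enough that \(g(\varepsilon_k)\le c^{-k}p_0/\varepsilon_0^{d-1}\) for every \(k\) (possible since \(g(r)\to0\)). Then \(p_k\ge c^{-k}(\varepsilon_k/\varepsilon_0)^{d-1}p_0\ge g(\varepsilon_k)\,\varepsilon_k^{d-1}\), so
\begin{align*}
\lim_{N\to\infty}\sup_{\mu\in M}\mu[\ncrossings{\varepsilon_k}{\rho}{y}{\Gamma}\ge N]\ \ge\ g(\varepsilon_k)\,\varepsilon_k^{d-1}\qquad\text{for every }k\in\bN,
\end{align*}
exhibiting a sequence \(\varepsilon_k\to0\) along which the left side of the asserted bound, evaluated at \(x=y\) and \(R=\rho\), is not \(o(g(r)r^{d-1})\). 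Since \(g\) was arbitrary (and \(y,\rho\) depend only on \(g\) and the initial failure of regularity), contraposition yields the Proposition.

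I expect the main obstacle to be the covering estimate \(|\cF_k|\le c(\varepsilon_k/\varepsilon_{k+1})^{d-1}\), and in particular the fact that \(c=c(d)\) must be taken strictly larger than \(1\): one cannot simultaneously pack and cover a piece of the \((d-1)\)-sphere \(S_0\) by balls of the \emph{same} radius, so every step of the iteration loses a dimensional factor and only \(p_k\ge c^{-k}(\varepsilon_k/\varepsilon_0)^{d-1}p_0\) survives. Absorbing the harmful factor \(c^{-k}\) is what forces the scales \(\varepsilon_k\) to thin out relative to a fixed geometric sequence, and that thinning is precisely what the auxiliary function \(g\) records; this is also why the ball argument yields only \(o(g(r)r^{d-1})\) rather than the sharp \(o(r^{d-1})\) of \Cref{thm:regular-curves-euclidean}, which one obtains by replacing balls with dyadic cubes on a cube boundary, where the cover can be taken exact and \(c=1\). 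A secondary matter requiring care, which I have only sketched, is the \(g\)-dependent adaptive choice of the scales \(\varepsilon_k\) together with the constant bookkeeping in the Vitali estimate and the diameter bounds.
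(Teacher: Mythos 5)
Your argument is essentially the paper's own proof of this proposition, which is precisely the ball-based sketch preceding the statement: the separating sphere \(S_0=\partial B_x\big(\frac{R+r}{2}\big)\), the Vitali covering bound \(|\cF_k|\le c\,(\varepsilon_k/\varepsilon_{k+1})^{d-1}\) with a dimensional constant \(c>1\), the recursion \(p_{k+1}\ge p_k/|\cF_k|\), the passage to the point \(y=\bigcap_k S_k\) with \(\rho=\frac{R-r}{2}\) via \Cref{thm:crossing-properties}, and the \(g\)-adapted choice of scales \(\varepsilon_k\) followed by contraposition. The differences are only cosmetic (you carry \(p_0\) and explicit diameter constants), and you flag the same caveats---the lossy covering constant and the resulting \(g\)-dependent thinning of scales---that the paper itself acknowledges as the reason this yields \(o(g(r)r^{d-1})\) rather than the sharp \(o(r^{d-1})\) of \Cref{thm:regular-curves-euclidean}.
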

	The proof of \Cref{thm:ball-version} above can be modified to work even if the constant \(c\) depends on \(k\). The value \(d-1\) represents the box counting dimension of the set \(S_k \subset \partial B_x(\frac{R+r}{2})\). In fact, by bounding the \emph{lower} box counting dimension of the sets \(\partial B_x(\rho)\) one can prove analogues of \Cref{thm:ball-version} for more general metric spaces, for example Riemannian manifolds of dimension \(d\).
	
	Let us finish with the proof of \Cref{thm:regular-curves-euclidean}.
	\regulareuclideanthm*
	\begin{proof}[Proof of \Cref{thm:regular-curves-euclidean}]
		We will follow the structure and notations of the proof of \Cref{thm:regularity-on-bdd+closed=compact}. Suppose \(M \subset \cM_1(\Pathset(\bR^d))\) is not regular.  For each \(z \in \annulus{r}{R}{x}\), let \(Q_z\) be an open cube containing the point \(z\) such that \(\overline {Q_z} \subset \annulus{r}{R}{x}\). These cubes cover the compact set \(\partial B_x(\frac{R+r}{2}) \subset \annulus{r}{R}{x}\), so we can find a finite subset \(F \subset \annulus{r}{R}{x}\) such that \((Q_z)_{z \in F}\) covers \(\partial B_x(\frac{R+r}{2})\). Since \(\bigcup_{z \in F} \overline{Q_z}\) separates \(\partial B_x(r)\) and \(\partial B_x(R)\), any \(\annulus{r}{R}{x}\) crossing \(\eta: (a, b) \to \bR^d\) hits at least one of the sets \(\overline{Q_z}\). Furthermore, as \(\eta(a) \notin \annulus{r}{R}{x} \supset \overline{Q_z}\), and for some \(t \in (a, b)\) we have \(\eta(t) \in \overline{Q_z}\), we conclude that \(\eta[a, b] \cap \partial Q_z \ne \emptyset\). This shows that also the set \(S_0 := \bigcup_{z \in F} \partial Q_z\) separates \(\partial B_x(r)\) and \(\partial B_x(R)\).
		
		Denote by \(\mathcal Q_z\) the set consisting of the \(2d\) faces of the cube \(Q_z\). In particular, \(\partial Q_z = \bigcup_{S \in \mathcal Q_z} S\), hence \(\cF_0 := \bigcup_{z \in F} \mathcal Q_z\) is a finite cover of \(S_0\). Choose a set \(S_1 \in \cF_0\) as in the proof of \Cref{thm:regularity-on-bdd+closed=compact} so that it satisfies~\eqref{eqn:many-set-crossings} for \(k = 0\), and denote by \(s_1\) the side length of \(S_1\). We may isometrically project \(S_1\) to \(\bR^{d-1}\) so that \(S_1 = \prod_{j=1}^{d-1} I^1_j\), where \(I^1_j \subset \bR\) are closed intervals of length \(s_1\). 
		
		Suppose we are given a closed cube \(S_k = \prod_{j=1}^{d-1} I^k_j \subset \bR^{d-1}\) with side length \(s_k\), which can be isometrically embedded to \(\bR^d\) so that we have
		\begin{align}\label{eqn:many-cube-crossings}
			\lim_{N \to \infty}\sup_{\mu \in M} \mu[\nscrossings{r}{R}{x}{\Gamma}{S_k} \ge N] \ge p_k.
		\end{align}
		Denote by \(m^k_j\) the midpoint of \(I^k_j\), and let \(I^k_j(L)\) and \(I^k_j(R)\) be the closures of the connected components of \(I^k_j\setminus\{m^k_j\}\). Then the collection \(\cF_k := \{\prod_{j=1}^{d-1} I^k_j(\alpha_j) \cond \alpha_j \in \{L, R\}^{d-1}\}\) forms a closed cover of \(S_k\) of size \(|\cF_k| = 2^{d-1}\). As in the proof of \Cref{thm:regularity-on-bdd+closed=compact}, we may choose \(S_{k+1} \in \cF_k\) which, when embedded to \(\bR^d\) through the isometry for \(S_k\), satisfies~\eqref{eqn:many-set-crossings}:
		\begin{align*}
			\lim_{N \to \infty}\sup_{\mu \in M} \mu_n[\nscrossings{r}{R}{x}{\Gamma}{S_{k+1}} \ge N] \ge \frac{p_k}{2^{d-1}} =: p_{k+1}.
		\end{align*}
		
		By induction, we have thus constructed cubes \((S_k)_{k \in \bN}\) on \(\bR^{d-1}\) with side lengths \(s_k = 2^{-k}s_0\) which can be isometrically embedded to \(\bR^d\) so that they satisfy~\eqref{eqn:many-cube-crossings} for \(p_k = 2^{(1-d)k}p_0\). Note that for \(k \ge 1\) we have \(\varepsilon_k := \diam(S_k) = cs_k\) for some \(c \ge 1\), hence \(\varepsilon_k = 2^{1-k}\varepsilon_1\). With the choice \(\varepsilon = \varepsilon_k\), \eqref{eqn:nonregular-annulus} thus becomes
		\begin{align*}
			\lim_{N \to \infty} \sup_{\mu \in M} \mu[\ncrossings{\varepsilon_k}{\rho}{y}{\Gamma} \ge N] \ge \frac{p_0}{(2\varepsilon_1)^{d-1}}\varepsilon_k^{d-1}.
		\end{align*}
		In particular, we get
		\begin{align*}
			\limsup_{r \to 0} \frac{\lim_{N \to \infty} \sup_{\mu \in M} \mu[\ncrossings{r}{\rho}{y}{\Gamma} \ge N]}{r^{d-1}} \ge\;& \limsup_{k \to \infty} \frac{\lim_{N \to \infty} \sup_{\mu \in M} \mu[\ncrossings{\varepsilon_k}{\rho}{y}{\Gamma} \ge N]}{\varepsilon_k^{d-1}} \\
			\ge\;& \frac{p_0}{(2\varepsilon_1)^{d-1}} > 0,
		\end{align*}
		i.e, \(\lim_{N \to \infty} \sup_{\mu \in M} \mu[\ncrossings{r}{\rho}{y}{\Gamma} \ge N] \ne o(r^{d-1})\). The claim follows by contraposition.
	\end{proof}
	
	\appendix
	\crefalias{section}{appendix}
	\section{Conditioning on closed separable sets preserves precompactness}\label{sec:appendix}
	In this appendix we prove that conditioning on closed separable sets preserves precompactness of measures, used in the proof of equivalence of sequential and asymptotic tightness (\Cref{thm:weak-limit-tight-iff-asymptotically-tight}). Recall the notation \(\conditioned{\mu}{A} \in \cM_1(\cX)\) for the measure \(\mu \in \cM_1(\cX)\) conditioned on a Borel set \(A \subset \cX\).
	\conditioning*
	\Cref{thm:conditioning-preserves-precompactness} follows directly from the version where, instead of conditioning the measures, we restrict them (\Cref{thm:restricting-preserves-precompactnes}). Denote by \(\cM_{\le 1}(\cX)\) the collection of Borel measures \(\mu\) on \(\cX\) with total mass \(\mu(\cX) \le 1\). For \(\mu \in \cM_{\le 1}(\cX)\) and a Borel subset \(A \subset \cX\), denote by \(\restricted{\mu}{A}\) the restricted measure
	\begin{align}
		\restricted{\mu}{A}(B) := \mu(B \cap A) \qquad \forall B \subset \cX \text{ Borel}.
	\end{align}
	Weak convergence in \(\cM_{\le 1}(\cX)\) is defined similarly to weak convergence  in \(\cM_1(\cX)\).
	\begin{prop}\label{thm:restricting-preserves-precompactnes}
		Let \(\cX\) be a metric space. Suppose \(\mathcal C\) is a precompact subset of \(\cM_{\le 1}(\cX)\) and \(Y \subset \cX\) is a separable closed subset. Then the restricted measures
		\begin{align*}
			\restricted{\mathcal C}{Y} := \{\restricted{\mu}{Y} \cond \mu \in \mathcal C\}
		\end{align*}
		form a precompact subset of \(\cM_{\le 1}(\cX)\).
	\end{prop} 
	Let us first show how~\Cref{thm:conditioning-preserves-precompactness} follows from~\Cref{thm:restricting-preserves-precompactnes} and the following version of Portmanteau's theorem for bounded measures. Recall that a Borel set \(B \subset \cX\) is called a continuity set of a measure \(\mu \in \cM_{\le 1}(\cX)\) if \(\mu(\partial B) = 0\).
	\begin{theorem}[Portmanteau's theorem; {combination of \cite[Theorem 13.16]{klenke2013probability} and \cite[Theorem 2.2]{billingsley1999convergence}}]\label{thm:Portmanteau}
		Let \(\cX\) be a metrizable topological space, and let \(\mu, \mu_1, \mu_2, \mu_3, \ldots\) be measures in \(\cM_{\le 1}(\cX)\) such that \(\lim_{n \to \infty}\mu_n(\cX) = \mu(\cX)\). Then the following are equivalent.
		\begin{enumerate}[label=(\roman*)]
			\item\label{item:weak-convergence} The sequence \((\mu_n)_{n \in \bN}\) converges weakly to \(\mu\).
			\item\label{item:closed-sets} \(\limsup_{n \to \infty} \mu_n(F) \le \mu(F)\) for every closed \(F \subset \cX\).
			\item\label{item:open-sets} \(\liminf_{n \to \infty}\mu_n(U) \ge \mu(U)\) for every open \(U \subset \cX\).
			\item\label{item:continuity-sets} \(\lim_{n \to \infty} \mu_n(B) = \mu(B)\) for every continuity set \(B \subset \cX\) of \(\mu\).
			\item\label{item:pi-continuity-sets} There exists a \(\pi\)-system \(\Pi\) such that every open set \(U \subset \cX\) is a countable union of sets in \(\Pi\), and every \(A \in \Pi\) satisfies \(\lim_{n \to \infty}\mu_n(A) = \mu(A)\).
		\end{enumerate}
	\end{theorem}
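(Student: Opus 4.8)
The plan is to deduce the statement from the classical Portmanteau theorem for probability measures (the two cited results) by a one-point extension of \(\cX\). Fix a metric \(d\) inducing the topology of \(\cX\); replacing \(d\) by \(\min(d,1)\) we may assume \(d \le 1\). Let \(\cX^* := \cX \sqcup \{*\}\), metrized by extending \(d\) with \(d(x,*) := 1\) for every \(x \in \cX\); this is a metric, \(\cX^*\) is metrizable, and \(\cX\) is a clopen subset of \(\cX^*\). To each \(\nu \in \cM_{\le 1}(\cX)\) associate the Borel probability measure \(\nu^* \in \cM_1(\cX^*)\), \(\nu^*(A) := \nu(A \cap \cX) + \big(1 - \nu(\cX)\big)\mathbf 1[* \in A]\).

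First I would show that, under the standing hypothesis \(\mu_n(\cX) \to \mu(\cX)\), we have \(\mu_n \xrightarrow{w} \mu\) in \(\cM_{\le 1}(\cX)\) if and only if \(\mu_n^* \xrightarrow{w} \mu^*\) in \(\cM_1(\cX^*)\). Since \(\cX\) is clopen in \(\cX^*\), a bounded continuous \(f : \cX \to \bR\) extends to a bounded continuous function on \(\cX^*\) by setting it equal to \(0\) on \(\{*\}\), while a bounded continuous \(g : \cX^* \to \bR\) restricts to one on \(\cX\); combining this with the identity \(\int_{\cX^*} g \,\ud\nu^* = \int_{\cX} g \,\ud\nu + \big(1-\nu(\cX)\big) g(*)\) and the mass convergence \(\mu_n(\cX) \to \mu(\cX)\) gives both implications. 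This is the only step that uses the mass-convergence hypothesis.

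Next I would transfer each of the five conditions between \(\cX\) and \(\cX^*\). Because \(\cX\) is clopen in \(\cX^*\): a subset \(F \subset \cX\) is closed (resp. open) in \(\cX\) if and only if it is closed (resp. open) in \(\cX^*\); every closed subset of \(\cX^*\) is either such an \(F\) or \(F \cup \{*\}\); and \(\partial B\) computed in \(\cX\) and in \(\cX^*\) coincide for Borel \(B \subset \cX\), with \(\mu^*(\partial B) = \mu(\partial B)\). Feeding these observations into the bookkeeping identities \(\mu_n^*(F \cup \{*\}) = \mu_n(F) + \big(1 - \mu_n(\cX)\big)\) and using \(\mu_n(\cX) \to \mu(\cX)\) once more, conditions~\ref{item:closed-sets}, \ref{item:open-sets} and~\ref{item:continuity-sets} on \(\cX\) become equivalent to their classical counterparts for \((\mu_n^*,\mu^*)\). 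For~\ref{item:pi-continuity-sets} one passes between a \(\pi\)-system \(\Pi\) on \(\cX\) and the \(\pi\)-system \(\Pi^* := \Pi \cup \{\{*\}\} \cup \{A \cup \{*\} : A \in \Pi\}\) on \(\cX^*\), in each direction checking that the countable-union generation of the open sets and the pointwise convergence of the measures are preserved. Applying the classical Portmanteau theorem to \((\mu_n^*)_{n \in \bN}\) and \(\mu^*\) on the metrizable space \(\cX^*\) then finishes the proof. I expect the only real friction to be the administrative verification of the \(\pi\)-system equivalence for~\ref{item:pi-continuity-sets}, together with a check that the cited classical statement is available for metric spaces that need not be separable (it is); an alternative that sidesteps the extension is to rerun the classical chain of implications directly, tracking the total masses \(\mu_n(\cX)\) at each step where complementation is used.
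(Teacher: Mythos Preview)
The paper does not give its own proof of this statement; it is quoted as a combination of the two cited textbook results and used as a black box. Your one-point extension argument is a correct and clean way to reduce the sub-probability case to the classical Portmanteau theorem for probability measures, and it makes transparent exactly where the hypothesis \(\mu_n(\cX) \to \mu(\cX)\) is used.

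The only point that deserves a line of care, which you already flag, is the \(\pi\)-system step: as written, \(\Pi^* = \Pi \cup \{\{*\}\} \cup \{A \cup \{*\} : A \in \Pi\}\) need not be closed under finite intersections, since \(\{*\} \cap A = \emptyset\) for \(A \in \Pi\) and \(\emptyset\) may not lie in \(\Pi^*\). Adjoining \(\emptyset\) (and, if you like, \(\cX^*\)) to \(\Pi^*\) fixes this harmlessly, as \(\mu_n^*(\emptyset) \to \mu^*(\emptyset)\) and \(\mu_n^*(\cX^*) \to \mu^*(\cX^*)\) are trivial. With that adjustment your transfer of condition~\ref{item:pi-continuity-sets} goes through in both directions, and the remaining bookkeeping for \ref{item:closed-sets}--\ref{item:continuity-sets} is exactly as you describe, using that \(\cX\) is clopen in \(\cX^*\) and that \(\limsup(a_n + b_n) = \limsup a_n + \lim b_n\) when \((b_n)\) converges.
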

	\begin{proof}[Proof of~\Cref{thm:conditioning-preserves-precompactness}]
		Suppose \(\mathcal C \subset \cM_1(\cX)\) and \(Y \subset \cX\) satisfy the assumptions in~\Cref{thm:conditioning-preserves-precompactness}. By~\Cref{thm:restricting-preserves-precompactnes}, the restricted measures \(\restricted{\mathcal C}{Y}\) form a precompact subset of \(\cM_{\le 1}(\cX)\). Every sequence \((\mu_n)_{n \in \bN}\) in \(\mathcal C\) thus has a subsequence \((n_j)_{j \in \bN}\) along which the weak limit \(\restricted{\mu_{n_j}}{Y} \xrightarrow{w} \nu\) exists. From weak convergence we get
		\begin{align*}
			\nu(\cX) = \lim_{j \to \infty} \restricted{\mu_{n_j}}{Y}(\cX) = \lim_{j \to \infty}\mu_{n_j}(Y) \ge \inf_{\mu \in \mathcal C}\mu(Y) > 0,
		\end{align*}
		while Portmanteau (\Cref{thm:Portmanteau} \ref{item:weak-convergence}\(\implies\)\ref{item:open-sets}) implies
		\begin{align*}
			\nu(\cX\setminus Y) \le \liminf_{j \to \infty}\restricted{\mu_{n_j}}{Y}(\cX\setminus Y) = 0,
		\end{align*}
		thus we conclude \(\nu(Y) = \nu(\cX) = \lim_{j \to \infty} \mu_{n_j}(Y) > 0\). In particular, for any closed subset \(F \subset \cX\) we get
		\begin{align*}
			\limsup_{j \to \infty}\conditioned{(\mu_{n_j})}{Y}(F) = \frac{\limsup_{j \to \infty} \restricted{\mu_{n_j}}{Y}(F \cap Y)}{\lim_{j \to \infty} \mu_{n_j}(Y)} \ge \frac{\nu(F \cap Y)}{\nu(Y)} =: \conditioned{\nu}{}(F),
		\end{align*}
		where the inequality follows from Portmanteu (\Cref{thm:Portmanteau} \ref{item:weak-convergence} \(\implies\) \ref{item:closed-sets}) applied to the sequence \(\restricted{\mu_{n_j}}{Y} \xrightarrow{w} \nu\). By applying the converse of this implication we conclude the weak convergence  \(\conditioned{(\mu_{n_j})}{Y} \xrightarrow{w} \conditioned{\nu}{}\). As the sequence \((\conditioned{(\mu_n)}{Y})_{n \in \bN}\) was arbitrary, we conclude precompactness of \(\conditioned{\mathcal C}{Y}\). 
	\end{proof}
	In the proof of~\Cref{thm:restricting-preserves-precompactnes} we will construct the limiting measure using the following version of Carath\'eodory's extension theorem. Let \(\mathcal A\) be a ring of sets (i.e. collection of sets closed under finite unions and set differences). A function \(\mu : \mathcal A \to [0,\infty]\) is said to be a premeasure on \(\mathcal A\) if for any sequence \((A_k)_{k \in \bN}\) of pairwise disjoint sets in \(\mathcal A\) such that \(A := \bigcup_{k \in \bN} A_k \in \mathcal A\) we have \(\mu(A) = \sum_{k \in \bN}\mu(A_k)\). We denote by \(\sigma(\cA)\) the sigma-algebra generated by \(\cA\), which is the smallest sigma-algebra containing every element of \(\cA\).
	\begin{theorem}[Carath\'eodory's extension theorem, {\cite[Theorem 1.41]{klenke2013probability}}]\label{thm:Caratheodory-extension}
		A finite pre-measure on a ring of sets \(\mathcal A\) extends uniquely to a measure on \(\sigma(A)\).
	\end{theorem}
	\begin{proof}[Proof of~\Cref{thm:restricting-preserves-precompactnes}]
		Take a sequence \((\mu_n)_{n \in \bN}\) in \(\mathcal C\); by precompactness we may pass to a subsequence to assume without loss of generality that the weak limit \(\mu_n \xrightarrow{w} \mu \in \cM_{\le 1}(\cX)\) exists. Our aim is to show that along a further subsequence \((n_j)_{j \in \bN}\) the weak limit \(\mu_{n_j}|_Y \xrightarrow{w} \nu\) exists.
		
		Let \(S\) be a countable dense subset of \(Y\). Since \(\mu\) is a finite measure, there is a countable dense subset \(\tilde R\) of \((0, \infty)\) such that
		\begin{align}\label{eqn:continuity-sets-of-mu}
			\mu(\partial B_s(r)) = 0 \qquad \forall s \in S, r \in \tilde R.
		\end{align}
		Let \(\mathcal A\) be the ring of sets generated by \(\mathcal U := \{B_s(r) \cond s \in S, r \in \tilde R\}\), and for \(A \in \mathcal A\) write \(A_\circ := A \cap Y^\circ\) and \(A_\partial := A \cap \partial Y\). Taking finite unions and differences of the sets in \(\mathcal U\) and applying~\eqref{eqn:continuity-sets-of-mu} shows that
		\begin{align}\label{eqn:continuity-sets-of-mu-1}
			\mu(\partial A) = 0 \qquad \forall A \in \mathcal A.
		\end{align}
		Since \(\mathcal A\) is countable, there exists a countable dense subset \(R\) of \((0, \infty)\) such that (write \(\mathcal A_\partial := \{A_\partial \cond A \in \mathcal A\}\))
		\begin{align}\label{eqn:continuity-sets-of-mu-2}
			\mu(\partial B_{I}(r)) = 0 \qquad \forall r \in R, I \in \mathcal A_\partial.
		\end{align}
		By diagonal extraction, there exists a subsequence \((n_j)_{j \in \bN}\) such that the following limits exist:
		\begin{align*}
			\tilde \nu_r(I) := \lim_{j \to \infty}\mu_{n_j}(B_I(r) \cap Y) \qquad r \in R, \forall I \in \mathcal A_\partial.
		\end{align*}
		Note that \(\tilde\nu_r(I)\) is monotone in \(r \in R\), so it admits the limit
		\begin{align*}
			\tilde\nu(I) := \lim_{r \to 0}\tilde\nu_r(I).
		\end{align*}
		Our goal is to prove the following claims:
		\begin{enumerate}[label=(\roman*)]
			\item\label{item:premeasure} \(\tilde \nu\) extends uniquely to a Borel measure on \(\partial Y\).
			\item\label{item:weak-limit}The Borel measure \(\nu\) on \(Y\) defined by
			\begin{align}
				\nu(B) := \mu(B \setminus \partial Y) + \tilde \nu(B \cap \partial Y) \qquad \forall B \subset Y \text{ Borel}
			\end{align}
			is the weak limit of \((\mu_{n_j}|_Y)_{j \in \bN}\). 
		\end{enumerate}
		To establish the above claims we will use the following auxiliary result. Let \((I_n)_{n \in \bN}\) be a sequence of pairwise disjoint sets in \(\mathcal A_\partial\), and write \(I := \bigcup_{n \in \bN} I\). Suppose \((n^I_j)_{j \in \bN}\) is a subsequence of \((n_j)_{j \in \bN}\) such that the following limits exist:
		\begin{align}\label{eqn:ttilde-definition}
			\ttilde \nu_r(I) := \lim_{j \to \infty}\mu_{n^I_j}(B_I(r) \cap Y), \qquad \ttilde \nu(I) := \lim_{r \to 0}\ttilde \nu_r(I);
		\end{align}
		such subsequences exist by diagonal extraction, while the limit \(\ttilde\nu(I)\) exists by monotonicity of \(r \mapsto \ttilde\nu_r(I)\). Denote by \(I^{\partial\circ}\) the interior of \(I \subset \partial Y\) with respect to the subspace topology on \(\partial Y\). Then we have
		\begin{align}\label{eqn:ttilde-equality}
			\ttilde \nu(I) = \sum_{k \in \bN} \tilde \nu(I_k), \qquad \text{whenever } \mu(\overline I \setminus I^{\partial \circ}) = 0,
		\end{align}
		so in particular the value of \(\ttilde \nu(I)\) does not depend on the choice of the subsequence \((n^I_j)_{j \in \bN}\). We will prove~\eqref{eqn:ttilde-equality} after showing how the claims~\ref{item:premeasure} and~\ref{item:weak-limit} follow from it.
		
		\noindent \textbf{Proof of \ref{item:premeasure}.} Let \((I_k)_{k \in \bN}\) be a sequence of pairwise disjoint sets in \(\mathcal A_\partial\), and suppose \(I := \bigcup_{k \in \bN}I_k \in \mathcal A_\partial\). By definition, there then exists \(A \in \mathcal A\) such that \(I = A_\partial\), so we get
		\begin{align*}
			\mu(\overline I\setminus I^{\partial \circ}) \le \mu(\overline A \setminus A^\circ) = \mu(\partial A) = 0,
		\end{align*}
		where the last equality is~\eqref{eqn:continuity-sets-of-mu}. We may thus apply~\eqref{eqn:ttilde-equality} to get
		\begin{align*}
			\tilde \nu(I) = \ttilde \nu(I) = \sum_{k \in \bN} \tilde \nu(I_k),
		\end{align*}
		proving that \(\tilde \nu\) is a premeasure on \(\mathcal A_\partial\). By~\Cref{thm:Caratheodory-extension}, \(\tilde \nu\) extends uniquely to a measure on \(\sigma(\mathcal A_\partial)\), which coincides with the Borel sigma-algebra for \(\partial Y\).
		
		\noindent\textbf{Proof of~\ref{item:weak-limit}.} Note that every open subset of \(Y\) can be expressed in terms of countable unions of sets in the \(\pi\)-system \(\{A \cap Y \cond A \in \mathcal A\}\).  By~\Cref{thm:Portmanteau} (\ref{item:pi-continuity-sets} \(\implies\) \ref{item:weak-convergence}), to prove \(\mu_{n_j}|_Y \xrightarrow{w} \nu\) it thus suffices to check that
		\begin{align}
			\lim_{j \to \infty} \mu_{n_j}(A \cap Y) = \nu(A \cap Y) \qquad \forall A \in \mathcal A \cup \{Y\}. \label{eqn:nu-continuity-sets}
		\end{align}
		First, take \(A \in \mathcal A\). For any \(r \in R\) we have
		\begin{align}\label{eqn:decomposition}
			\mu_{n_j}(A \cap Y) =\;& \mu_{n_j}(B_{A_\partial}(r) \cap Y) + \mu_{n_j}(A_\circ \setminus B_{A_\partial}(r)) - \mu_{n_j}(Y \cap (B_{A_\partial}(r) \setminus A)).
		\end{align}
		Note that \(\partial(A_\circ \setminus B_{A_\partial}(r)) \subset \partial A \cup \partial B_{A_\partial}(r)\), so by ~\eqref{eqn:continuity-sets-of-mu-1} and~\eqref{eqn:continuity-sets-of-mu-2} we conclude that \(A_\circ \setminus B_{A_\partial}(r)\) is a continuity set of \(\mu\). Applying the definition of \(\tilde \nu_r\) to the first term and Portmanteau (\Cref{thm:Portmanteau}, \ref{item:weak-convergence} \(\implies\) \ref{item:continuity-sets}) to the second term on the right hand side of~\eqref{eqn:decomposition} thus yields
		\begin{align*}
			\lim_{j \to \infty}\Big(\mu_{n_j}(B_{A_\partial}(r) \cap Y) + \mu_{n_j}(A_\circ \setminus B_{A_\partial}(r))\Big) =\;& \tilde \nu_r(A_\partial) + \mu(A_\circ \setminus B_{A_\partial}(r))\\
			\xrightarrow{r \to 0}\;& \tilde \nu(A_\partial) + \mu(A_\circ) = \nu(A \cap Y).
		\end{align*}
		Applying Portmanteau (\Cref{thm:Portmanteau} \ref{item:weak-convergence} \(\implies\) \ref{item:closed-sets}) to the closed set \(\overline {B_A(r)} \setminus A^\circ\) containing \(Y \cap (B_{A_\partial}(r) \setminus A)\) we can bound the last term in~\eqref{eqn:decomposition} as follows:
		\begin{align*}
			\limsup_{j \to \infty }\mu_{n_j}(Y \cap (B_{A_\partial}(r) \setminus A)) \le \mu(\overline B_A(r) \setminus A^\circ) \xrightarrow{r \to 0} \mu(\partial A) = 0;
		\end{align*}
		the last equality holds by~\eqref{eqn:continuity-sets-of-mu-1}. Hence, taking first \(j \to \infty\) and then \(r \to 0\) on the right hand side of~\eqref{eqn:decomposition} yields~\eqref{eqn:nu-continuity-sets} for every \(A \in \mathcal A\).
		
		Let us next check the remaining case \(A = Y\) of~\eqref{eqn:nu-continuity-sets}. Since \(\mathcal U\) is a countable covering of \(Y\), there exists a partition \(\partial Y = \bigcup_{k \in \bN} I_k\) in terms of sets \(I_k \in \mathcal A_\partial\). Since \(\overline{\partial Y} = \partial Y = (\partial Y)^{\partial\circ}\),~\eqref{eqn:ttilde-equality} is applicable and yields
		\begin{align*}
			\ttilde \nu(Y) = \sum_{k \in \bN}\tilde \nu(I_k) = \tilde\nu\Big(\bigcup_{k \in \bN} I_k\Big) = \tilde \nu(Y).
		\end{align*}
		Let \((n^{\partial Y}_j)_{j \in \bN}\) be a subsequence along which the limits~\eqref{eqn:ttilde-definition} exist. For every \(r \in R\) we have \(\partial (Y \setminus B_{\partial Y}(r)) \subset \partial B_{\partial Y}(r)\), hence by~\eqref{eqn:continuity-sets-of-mu-2} \(Y \setminus B_{\partial Y}(r)\) is a continuity set of \(\mu\). By definition of \(\ttilde \nu\) and Portmanteau (\Cref{thm:Portmanteau} \ref{item:weak-convergence} \(\implies\) \ref{item:continuity-sets}) we thus get
		\begin{align*}
			\mu_{n^{\partial Y}_j}(Y) =\;& \mu_{n^{\partial Y}_j}(B_{\partial Y}(r) \cap Y) + \mu_{n^{\partial Y}_j}(Y \setminus B_{\partial Y}(r))\\
			\xrightarrow{j \to \infty}\;& \ttilde \nu_r(\partial Y) + \mu(Y \setminus B_{\partial Y}(r))\\
			\xrightarrow{r \to 0}\;& \tilde \nu(\partial Y) + \mu(Y \setminus \partial Y) = \nu(Y),
		\end{align*}
		where in the limit \(r \to 0\) we used the equality \(\ttilde\nu(\partial Y) = \tilde\nu(\partial Y)\). Since every subsequence of \((n_j)_{j \in \bN}\) admits a further subsequence \((n^{\partial Y}_j)_{j \in \bN}\), we conclude the limit~\eqref{eqn:nu-continuity-sets} when \(A = Y\). It thus remains to prove~\eqref{eqn:ttilde-equality}.
		
		\noindent\textbf{Proof of~\eqref{eqn:ttilde-equality}.} A key observation is that for every \(A \in \mathcal A\) and \(\varepsilon > 0\) there exist \(A^{+\varepsilon}, A^{-\varepsilon} \in \mathcal A\) satisfying
		\begin{align}\label{eqn:A-regularity}
			B_{A^{-\varepsilon}}(\varepsilon) \subset A \subset B_{A^{-\varepsilon}}(2\varepsilon) \qquad \text{and} \qquad B_{A}(\varepsilon) \subset A^{+\varepsilon} \subset B_{A}(2\varepsilon)
		\end{align}
		Indeed, if \(A = B_s(r) \in \mathcal U\), we may choose \(A^{\pm\varepsilon} = B_s(r^\pm)\), where \(r^- \in (r-2\varepsilon, r-\varepsilon) \cap \tilde R\) and \(r^+ \in (r+\varepsilon, r+2\varepsilon) \cap \tilde R\). If \(A, \tilde A \in \mathcal A\) both admit sets \(A^{\pm\varepsilon}, \tilde A^{\pm\varepsilon} \in \mathcal A\) satisfying~\eqref{eqn:A-regularity}, then it is straightforward to check that we may choose
		\begin{align*}
			(A \cup \tilde A)^{\pm\varepsilon} := A^{\pm\varepsilon} \cup \tilde A^{\pm\varepsilon}, \qquad \text{and} \qquad (A\setminus \tilde A)^{\pm\varepsilon} := A^{\pm\varepsilon}\setminus \tilde A^{\mp\varepsilon}.
		\end{align*}
		Since every element of \(\mathcal A\) is obtained from the sets in \(\mathcal U\) via finite unions and set differences, the existence of the sets \(A^{\pm \varepsilon} \in \mathcal A\) for any \(A \in \mathcal A\) satisfying \eqref{eqn:A-regularity} follows by induction.
		
		Take a sequence \((I_k)_{k \in \bN}\) of pairwise disjoint sets in \(\mathcal A_\partial\), and write \(I := \bigcup_{k \in \bN} I_k\), and let \((n^I_j)_{j \in \bN}\) be a subsequence of \((n_j)_{j \in \bN}\) such that the limits in~\eqref{eqn:ttilde-definition} exist. By definition, \(I_k = (A_k)_\partial =: A_{k\partial}\) for some \(A_k \in \mathcal A\); since \(\mathcal A\) is a ring, we may without loss of generality assume that \(A_k\) are pairwise disjoint. Fix \(r \in R\), \(\varepsilon \in (0, r) \cap R\) and \(\delta \in (0,\varepsilon) \cap R\). Note that  we have \(B_{A^{-\varepsilon}_{k\partial}}(\delta) \subset A_k\), while \((A_k)_{k \in \bN}\) are pairwise disjoint. Furthermore, since \(A^{-\varepsilon}_{k\partial} \subset I\) and \(\delta < r\), we have \(B_{A^{-\varepsilon}_{k\partial}}(\delta) \subset B_I(r)\). By monotonicity and countable additivity of the measure \(\mu_{n_j}\) we thus get
		\begin{align*}
			0 \le \mu_{n^I_j}(B_I(r) \cap Y) - \sum_{k \in \bN}\mu_{n^I_j}(B_{A^{-\varepsilon}_{k\partial}}(\delta) \cap Y) \le \mu_{n^I_j}\Big(\overline{B_I(r)} \setminus \bigcup_{k \in \bN}B_{A^{-\varepsilon}_{k\partial}}(\delta)\Big).
		\end{align*}
		Taking \(\limsup_{j \to \infty}\) above and applying Fatou's lemma for the sum and Portmanteau (\Cref{thm:Portmanteau} \ref{item:weak-convergence} \(\implies\) \ref{item:closed-sets}) to the right hand side yields
		\begin{align*}
			0 \le \ttilde \nu_r(I) - \sum_{k \in \bN}\tilde \nu_{\delta}(A^{-\varepsilon}_{k\partial}) \le \mu\Big(\overline{B_I(r)} \setminus \bigcup_{k \in \bN}B_{A^{-\varepsilon}_{k\partial}}(\delta)\Big).
		\end{align*}
		Note that \(\tilde \nu_\delta(A^{-\varepsilon}_{k\partial})\) is dominated by \(\tilde \nu_{\varepsilon}(A^{-\varepsilon}_{k\partial})\), while \(\sum_{k \in \bN}\tilde \nu_{\varepsilon}(A^{-\varepsilon}_{k\partial}) \le \mu(\cX) < \infty\). Hence, we may take the limit \(\delta \to 0\) and apply dominated convergence theorem for the sum and reverse monotone convergence theorem to the right hand side to get
		\begin{align*}
			0 \le \ttilde \nu_r(I) - \sum_{k \in \bN}\tilde \nu(A^{-\varepsilon}_{k\partial}) \le \mu\big(\overline {B_I(r)} \setminus \bigcup_{k \in \bN}A^{-\varepsilon}_{k\partial}\Big).
		\end{align*}
		Since \(\tilde \nu(A^{-\varepsilon}_{k\partial})\) is increasing with decreasing \(\varepsilon\) and \(\mu\) is a finite measure, we may take the limits \(\varepsilon \to 0\) and \(r \to 0\) in this order and apply monotone convergence theorem to the sum and reverse monotone convergence to the right hand side to get (write \(A^\circ_{k\partial} := A^\circ_k \cap \partial Y\))
		\begin{align*}
			0 \le \ttilde \nu(I) - \sum_{k \in \bN}\lim_{\varepsilon \to 0}\tilde\nu(A^{-\varepsilon}_{k\partial}) \le \mu(\overline I \setminus \bigcup_{k \in \bN}A^\circ_{k\partial}).
		\end{align*}
		We have \(I^{\partial\circ} \subset I = \bigcup_{k \in \bN} A_{k\partial}\). Since furthermore \(A_{k\partial}\setminus A^\circ_{k\partial} \subset \partial A_k\) is a continuity set of \(\mu\) for every \(k \in \bN\) (by~\eqref{eqn:continuity-sets-of-mu-1}), we get
		\begin{align*}
			\mu(\overline I\setminus \bigcup_{k \in \bN}A^\circ_{k\partial}) = \mu(\overline I \setminus I^{\partial\circ}) + \mu\Big(\bigcup_{k \in \bN}A_{k\partial} \setminus A^\circ_{k\partial}\Big) = \mu(\overline I \setminus I^{\partial\circ}).
		\end{align*}
		We arrive at the following estimate:
		\begin{align}\label{eqn:ttilde-estimate}
			0 \le \ttilde \nu(I) - \sum_{k \in \bN}\lim_{\varepsilon \to 0}\tilde \nu(A^{-\varepsilon}_{k\partial}) \le \mu(\overline I \setminus I^{\partial\circ}).
		\end{align}
		The above estimate holds for any sequence \((I_k)_{k \in \bN}\) of pairwise disjoint sets in \(\mathcal A_\partial\). Specializing to the case where \(I_1 = A_\partial\) for some \(A \in \mathcal A\) and \(I_m = \emptyset\) for every \(m \ge 2\) yields
		\begin{align*}
			0 \le \ttilde \nu(A_\partial) - \lim_{\varepsilon \to 0}\tilde\nu(A^{-\varepsilon}_{\partial}) \le \mu(\overline {A_\partial} \setminus A_\partial^{\partial\circ}) \le \mu(\partial A) = 0,
		\end{align*}
		where the last equality holds by~\eqref{eqn:continuity-sets-of-mu-1}. We thus conclude \(\lim_{\varepsilon \to 0}\tilde\nu(A^{-\varepsilon}_{\partial}) = \tilde \nu(A_\partial)\) for every \(A \in \mathcal A\). Substituting this to~\eqref{eqn:ttilde-estimate} and assuming \(\mu(\overline I \setminus I^{\partial \circ}) = 0\) yields~\eqref{eqn:ttilde-equality}, finishing the proof.
	\end{proof}
	\bibliography{precbib}

@article{aizenman1999holder,
	title={H{\"o}lder regularity and dimension bounds for random curves},
	author={Aizenman, Michael and Burchard, Almut},
	journal={Duke Mathematical Journal},
	volume={99},
	number={3},
	pages={419--453},
	year={1999},
	publisher={Duke University Press}
}

@article{benoist2019scaling,
	title={{The scaling limit of critical Ising interfaces is CLE(3)}},
	author={Benoist, St{\'e}phane and Hongler, Cl{\'e}ment},
	journal={Annals of Probability},
	volume={47},
	number={4},
	pages={2049--2086},
	year={2019},
	publisher={JSTOR}
}

@article{kemppainen2017random,
	title={{Random curves, scaling limits and Loewner evolutions}},
	author={Kemppainen, Antti and Smirnov, Stanislav},
	journal={Annals of Probability},
	volume={45},
	number={2},
	pages={698--779},
	year={2017},
	publisher={Institute of Mathematical Statistics}
}

@article{lecam1957convergence,
	title={Convergence in distribution of stochastic processes},
	author={Le Cam, Lucien},
	journal={University of California publications in statistics},
	volume={2},
	pages={207--236},
	year={1957}
}

@article{prokhorov1956convergence,
	title={Convergence of random processes and limit theorems in probability theory},
	author={Prokhorov, Yu V},
	journal={Theory of Probability \& Its Applications},
	volume={1},
	number={2},
	pages={157--214},
	year={1956},
	publisher={SIAM}
}

@article{smirnov2001critical,
	title={Critical percolation in the plane: conformal invariance, Cardy's formula, scaling limits},
	author={Smirnov, Stanislav},
	journal={Comptes Rendus de l'Acad{\'e}mie des Sciences-Series I-Mathematics},
	volume={333},
	number={3},
	pages={239--244},
	year={2001},
	publisher={Elsevier}
}

@book{billingsley1999convergence,
title =     {Convergence of probability measures},
author =    {Patrick Billingsley},
publisher = {Wiley},
isbn =      {9780471197454; 0471197459},
year =      {1999},
series =    {Wiley series in probability and statistics. Probability and statistics section},
edition =   {2nd}
}

@book{klenke2013probability,
	title={Probability theory: a comprehensive course},
	author={Klenke, Achim},
	year={2013},
	publisher={Springer Science \& Business Media}
}

@book{pollard2002user,
	title={A user's guide to measure theoretic probability},
	author={Pollard, David},
	number={8},
	year={2002},
	publisher={Cambridge University Press}
}
	\bibliographystyle{alpha}
\end{document}